\newtheorem{thm}{Theorem}[section]
\newtheorem{cor}[thm]{Corollary}
\newtheorem{lem}[thm]{Lemma}
\newtheorem{prop}[thm]{Proposition}
\theoremstyle{definition}
\newtheorem{dfn}[thm]{Definition}
\newtheorem{ques}[thm]{Question}
\newtheorem{rem}[thm]{Remark}
\theoremstyle{remark}
\newtheorem*{claim*}{Claim}
\numberwithin{equation}{thm}
\def\db{\operatorname{D^b}}
\def\dm{\operatorname{D^{\mbox{\boldmath$-$}}}}
\def\ds{\operatorname{D_{sg}}}
\def\dpf{\operatorname{D^{perf}}}
\def\thick{\operatorname{thick}}
\def\K{\operatorname{K}}
\def\H{\operatorname{H}}
\def\pd{\operatorname{pd}}
\def\cx{\operatorname{cx}}
\def\height{\operatorname{ht}}
\def\codim{\operatorname{codim}}
\def\Ext{\operatorname{\mathsf{Ext}}}
\def\spc{\operatorname{\mathsf{Esp}}}
\def\bspc{\operatorname{\mathsf{Spc}}}
\def\spec{\operatorname{\mathsf{Spec}}}
\def\sing{\operatorname{\mathsf{Sing}}}
\def\rad{\operatorname{\mathsf{Rad}}}
\def\supp{\operatorname{\mathsf{Supp}}}
\def\spp{\operatorname{\mathsf{Spp}}}
\def\bspp{\operatorname{\mathsf{supp}}}
\def\ssupp{\operatorname{\mathsf{\underline{Supp}}}}
\def\st{\operatorname{\mathsf{Param}}}
\def\V{\operatorname{\mathsf{V}}}
\def\Z{\operatorname{\mathsf{Z}}}
\def\th{\operatorname{\mathsf{Th}}}
\def\spcl{\operatorname{\mathsf{Spcl}}}
\def\thom{\operatorname{\mathsf{Thom}}}
\def\Tame{\operatorname{\mathsf{Tame}}}
\def\m{\mathfrak{m}}
\def\p{\mathfrak{p}}
\def\q{\mathfrak{q}}
\def\r{\mathfrak{r}}
\def\pp{\mathfrak{s}}
\def\P{\mathcal{P}}
\def\Q{\mathcal{Q}}
\def\C{\mathcal{C}}
\def\D{\mathcal{D}}
\def\E{\mathcal{E}}
\def\s{\mathcal{S}}
\def\T{\mathcal{T}}
\def\X{\mathcal{X}}
\def\Y{\mathcal{Y}}
\def\cZ{\mathcal{Z}}
\def\II{\mathbb{I}}
\def\PP{\mathbb{P}}
\def\RR{\mathbb{R}}
\def\ZZ{\mathbb{Z}}
\def\XX{\mathbb{X}}
\def\tame{\mathsf{tame}}
\def\inc{\mathrm{inc}}
\def\zero{\mathbf{0}}
\def\xx{\boldsymbol{x}}
\title[Construction of spectra of triangulated categories without tensor structure]{Construction of spectra of triangulated categories without tensor structure and applications to commutative rings}
\author{Hiroki Matsui}
\address{Graduate School of Mathematics, Nagoya University, Furocho, Chikusaku, Nagoya, Aichi 464-8602, Japan}
\email{m14037f@math.nagoya-u.ac.jp}
\author{Ryo Takahashi}
\address{Graduate School of Mathematics, Nagoya University, Furocho, Chikusaku, Nagoya, Aichi 464-8602, Japan/Department of Mathematics, University of Kansas, Lawrence, KS 66045-7523, USA}
\email{takahashi@math.nagoya-u.ac.jp}
\urladdr{http://www.math.nagoya-u.ac.jp/~takahashi/}
\subjclass[2010]{13D09, 13H10, 18E30}
\keywords{complete intersection, derived category, hypersurface, perfect complex, singularity category, spectrum, (prime/radical/tame) thick subcategory, triangulated category}
\thanks{HM was partly supported by JSPS Grant-in-Aid for JSPS Fellows 16J01067. RT was partly supported by JSPS Grants-in-Aid for Scientific Research 16K05098 and JSPS Fund for the Promotion of Joint International Research 16KK0099}
\begin{document}
\begin{abstract}
In this paper, as an analogue of the spectrum of a tensor triangulated category introduced by Balmer, we define a spectrum of a triangulated category which does not necessarily admit a tensor structure.
We apply it for some triangulated categories associated to a commutative noetherian ring.
\end{abstract}
\maketitle
%%%%%%%%%%%%%%%%%%%%%%%%%%%%%%%%%%%%%%%%%%%%%%%%%%%%%%%
\section{Introduction}

Classification of thick subcategories has been one of the main approaches in the studies of triangulated categories for several decades.
It has been studied so far in many areas such as stable homotopy theory, modular representation theory, algebraic geometry, commutative/non-commutative algebra and so on; see \cite{BCR,BIK,BIKP,FP,Hop,HS,Nee,Tak10,Ste,Th} and references therein.
In commutative algebra, Hopkins \cite{Hop} and Neeman \cite{Nee} classified thick subcategories of derived categories of perfect complexes over commutative rings via Zariski spectra.
The second author \cite{Tak10} classified thick subcategories of singularity categories of hypersurfaces via singular loci, and it was extended to complete intersections by Stevenson \cite{Ste}.

On the other hand, there is a beautiful theory initiated by Balmer \cite{Bal05} which is called {\em tensor triangular geometry}.
He introduced the concepts of thick tensor ideals, radical thick tensor ideals and prime thick tensor ideals of an essentially small tensor triangulated category $\T$ as analogues of ideals, radical ideals and prime ideals of commutative rings.
Then he defined a topology on the set $\bspc \T$ of prime thick tensor ideals of $\T$, which is called the {\it Balmer spectrum} of $\T$.
He accomplished the following monumental work in this theory, which enables us to do algebro-geometric studies of tensor triangulated categories.

\begin{thm}[Balmer]\label{0Balcls}
Let $\T$ be an essentially small tensor triangulated category.
Taking the Balmer supports gives a bijection between the radical thick tensor ideals of $\T$ and the Thomason subsets of $\bspc \T$.
\end{thm}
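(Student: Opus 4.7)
The plan is to construct maps in both directions and show they are mutually inverse. Given a radical thick tensor ideal $\mathcal{J}\subseteq\T$, set
$$
\sigma(\mathcal{J})=\bigcup_{a\in \mathcal{J}}\supp(a),
$$
where $\supp(a)=\{\P\in\bspc\T : a\notin\P\}$ is the Balmer support; given a Thomason subset $Y\subseteq\bspc\T$, set
$$
\tau(Y)=\{\,a\in\T : \supp(a)\subseteq Y\,\}.
$$
My first step would be to establish the formal properties of $\supp$: $\supp(0)=\emptyset$, $\supp(\mathbf{1})=\bspc\T$, $\supp(\Sigma a)=\supp(a)$, $\supp(a\oplus b)=\supp(a)\cup\supp(b)$, the ``triangle inequality'' $\supp(c)\subseteq\supp(a)\cup\supp(b)$ along any exact triangle $a\to b\to c\rightsquigarrow$, and the $\otimes$-multiplicativity $\supp(a\otimes b)=\supp(a)\cap\supp(b)$. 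Each follows from the definition of a prime thick tensor ideal; the last uses exactly the implication $a\otimes b\in\P\Rightarrow a\in\P$ or $b\in\P$. As formal consequences, $\tau(Y)$ is automatically a radical thick tensor ideal, and $\sigma(\mathcal{J})$ is Thomason because the family $\{\bspc\T\setminus\supp(a)\}_{a\in\T}$ furnishes a basis of quasi-compact opens.

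The inclusions $\mathcal{J}\subseteq\tau\sigma(\mathcal{J})$ and $\sigma\tau(Y)\subseteq Y$ are tautological. For the reverse $Y\subseteq\sigma\tau(Y)$, given $\P\in Y$ I would invoke the Thomason property to write $Y=\bigcup_i\supp(a_i)$, choose $i$ with $\P\in\supp(a_i)$, and observe $a_i\in\tau(Y)$. The real content, and the main obstacle of the whole theorem, is the remaining inclusion $\tau\sigma(\mathcal{J})\subseteq\mathcal{J}$: a radical thick tensor ideal is recovered from its Balmer support.

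My plan for this hard inclusion is a Zorn-type argument. Assuming $a\in\tau\sigma(\mathcal{J})\setminus\mathcal{J}$, I would choose a radical thick tensor ideal $\Q\supseteq\mathcal{J}$ that is maximal subject to no tensor power $a^{\otimes n}$ belonging to $\Q$. The key lemma is that any such $\Q$ is automatically prime. To show that $x\otimes y\in\Q$ with $x\notin\Q$ forces $y\in\Q$, I would consider the radical thick tensor ideals generated by $\Q$ together with $x$ and with $y$ respectively; maximality of $\Q$ supplies integers $m,n\geq 1$ with $a^{\otimes m}$ in the first and $a^{\otimes n}$ in the second, and then tensoring together with iterated use of the triangle inequality pushes $a^{\otimes (m+n)}$ into $\Q$, a contradiction. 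This step, which requires genuinely triangulated reasoning rather than just lattice-theoretic manipulation, is where all the difficulty concentrates. Once primality of $\Q$ is in hand, the hypothesis $\supp(a)\subseteq\sigma(\mathcal{J})$ places $\Q$ in $\supp(b)$ for some $b\in\mathcal{J}$, so $b\notin\Q$ while $b\in\mathcal{J}\subseteq\Q$, which is absurd and finishes the proof.
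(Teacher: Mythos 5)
This statement is quoted in the paper as background due to Balmer and cited to \cite{Bal05}; the paper itself gives no proof, so there is nothing in the source to compare against line by line. Your proposal is, however, a faithful reconstruction of Balmer's original argument (Theorem 4.10 of \cite{Bal05} together with its supporting lemmas), and the overall structure is sound: the two maps $\sigma,\tau$, the formal support calculus, the three easy inclusions, and the Zorn-style primality argument for the hard inclusion are exactly the right skeleton.

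A few points you gesture at but leave unproved deserve flagging, since they carry real content. First, in showing $Y\subseteq\sigma\tau(Y)$ you write ``the Thomason property to write $Y=\bigcup_i\supp(a_i)$''; this uses the nontrivial fact that the closed subsets of $\bspc\T$ with quasi-compact open complement are precisely the sets $\supp(a)$, which in Balmer's paper is its own proposition and in turn rests on the quasi-compactness of $\bspc\T$ and its basic opens. Second, and more centrally, ``tensoring together with iterated use of the triangle inequality pushes $a^{\otimes(m+n)}$ into $\Q$'' compresses the essential lemma that for thick tensor ideals one has $\langle\Q,x\rangle\otimes\langle\Q,y\rangle\subseteq\langle\Q,x\otimes y\rangle$ (equivalently, $\langle\bar x\rangle\otimes\langle\bar y\rangle\subseteq\langle\bar x\otimes\bar y\rangle$ in the Verdier quotient $\T/\Q$). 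This is proved by a two-stage ``the set of $u$ with $u\otimes v\in\langle x\otimes y\rangle$ is a thick tensor ideal'' argument, not by a direct induction on triangles, and it is precisely where the tensor structure does irreplaceable work. You correctly identify this as the crux, but the phrase you use would not survive as written. Finally, you should record that passing from the radical ideals $\sqrt{\langle\Q,x\rangle}$, $\sqrt{\langle\Q,y\rangle}$ back to the underlying thick tensor ideals costs only a further tensor power, so the exponents $m,n$ can indeed be chosen so that the multiplicativity lemma applies. With those lemmas supplied, your proof is Balmer's.
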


The Balmer theory works for arbitrary (essentially small) tensor triangulated categories, but by definition, it does not (at least directly) work for a triangulated category without a tensor structure.
That is why in this paper we make a variant of Balmer's theory for a general triangulated category $\T$.
Once we assign a class $\spc \T$ of thick subcategories which we call {\it prime thick subcategories}, we can define a topology on $\spc\T$ and prove the following analogous result to Theorem \ref{0Balcls}.

\begin{thm}[Theorem \ref{class}]
Let $\T$ be an essentially small triangulated category.
Then there exists a one-to-one correspondence
$$
\rad\T\overset{\text{\rm{1-1}}}{\longleftrightarrow}\st\T.
$$
\end{thm}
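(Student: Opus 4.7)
The plan is to establish the bijection by constructing explicit mutually inverse maps, mimicking Balmer's classification but replacing the tensor-theoretic ingredients with whatever structural properties of $\spc\T$ and $\rad\T$ have been set up earlier in the paper. For an object $x\in\T$, let $\supp x=\{\P\in\spc\T: x\notin\P\}$ and for a thick subcategory $\X$, let $\supp\X=\bigcup_{x\in\X}\supp x$. Under the topology on $\spc\T$ whose basis of open sets is $\{\spc\T\setminus\supp x:x\in\T\}$, the set $\supp\X$ is automatically a Thomason subset since each $\supp x$ is closed with quasi-compact open complement.

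Define $\Phi\colon\rad\T\to\st\T$ by $\Phi(\X)=\supp\X$ and $\Psi\colon\st\T\to\rad\T$ by $\Psi(V)=\{x\in\T:\supp x\subseteq V\}$. The first verification I would carry out is that $\Psi$ lands in $\rad\T$: thickness follows from the standard behaviour of $\supp(-)$ under triangles and direct summands, while radicality is immediate because membership in $\Psi(V)$ is detected pointwise on primes, which is the defining feature of being radical.

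The identity $\Phi\Psi(V)=V$ reduces to showing every Thomason subset is expressible as a union of supports $\supp x$ and that the containment $\supp x\subseteq V$ does put $x$ in $\Psi(V)$; the first half is a general fact about Thomason subsets in a spectral-like space with the stated basis, and the second half is the definition of $\Psi$. The real substance is the reverse identity $\Psi\Phi(\X)=\X$. The nontrivial inclusion asks: if $\X$ is a radical thick subcategory and $\supp x\subseteq\supp\X$, then $x\in\X$. Taking contrapositive, this amounts to proving that every radical thick subcategory is the intersection of the primes containing it, or equivalently, that for every $x\notin\X$ there exists a prime $\P\in\spc\T$ with $\X\subseteq\P$ and $x\notin\P$. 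I would prove this by a Zorn's lemma argument: partially order the thick subcategories containing $\X$ but avoiding $x$ by inclusion, extract a maximal element $\P$, and verify that maximality combined with the definition of radicality forces $\P$ to be prime.

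The main obstacle is precisely this last Zorn-style step. In Balmer's tensor setting, primality of a maximal element follows from the divisibility property $a\otimes b\in\P\Rightarrow a\in\P\text{ or }b\in\P$, which is easy to check on a maximal $\P$ using the tensor product. Without a tensor structure one must instead exploit whatever replacement axiom the paper's definition of prime thick subcategory imposes; the whole point of defining $\spc\T$ the way the authors do is to make this maximality argument go through, and the proof of the classification will therefore hinge almost entirely on confirming that a thick subcategory maximal with respect to avoiding $x$ satisfies that axiom. Once this is in hand, the rest of the argument is a formal repackaging of Balmer's framework.
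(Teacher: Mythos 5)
Your proposal mis-reads the paper's setup in a way that creates a spurious difficulty and then fails to overcome it. In this framework $\spc\T$ is an \emph{arbitrary} subset of $\th\T$ chosen in advance; there is no intrinsic axiom characterizing a prime thick subcategory. Consequently your Zorn's-lemma step cannot work: taking a thick subcategory maximal among those containing $\X$ and avoiding $x$, there is no ``replacement axiom'' to invoke, and no reason whatsoever that this maximal subcategory should belong to the arbitrary set $\spc\T$. Moreover the statement you are trying to prove by Zorn's lemma --- that a radical thick subcategory equals the intersection of the primes containing it --- is not a theorem here, it is the \emph{definition} of radical ($\sqrt{\X}:=\bigcap_{\X\subseteq\P\in\spc\T}\P$ and $\X$ is radical iff $\sqrt{\X}=\X$). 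Once you notice this, the identity $\spp^{-1}(\spp\X)=\sqrt{\X}$ drops out of the definitions by a contrapositive, and both composite identities $\Psi\Phi=\mathrm{id}$ and $\Phi\Psi=\mathrm{id}$ are formal, exactly as in the paper's two-line proof.

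A second gap is your identification of $\st\T$ with the Thomason subsets of $\spc\T$. The paper defines $\st\T:=\{\spp\X\mid\X\in\th\T\}$ and explicitly poses the equality $\st\T=\thom(\spc\T)$ as an open Question (Question~2.10), answered only in special cases later. Your justification --- that each $\spp x$ has quasi-compact open complement --- is a Balmer fact that rests on the tensor unit and has no proof in this general setting; and even if it held it would only give $\st\T\subseteq\thom(\spc\T)$, not equality. The surjectivity of $\spp$ onto $\st\T$ is instead automatic from the definition of $\st\T$ together with $\spp\sqrt{\X}=\spp\X$. In short, you imported Balmer's existence-of-primes machinery into a setting where it is neither available nor needed, and missed that the authors' definitions render the theorem a triviality once unwound.
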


\noindent
Let us explain the notation used above:
$\rad\T$ is the set of {\em radical thick subcategories} of $\T$, which are variants of radical thick tensor ideals of a tensor triangulated category, while $\st\T$ consists of certain subsets of $\spc\T$, which parametrizes the radical thick subcategories as stated above.
%For the precise definitions, see Definition \ref{radef} and the preceding paragraph of Theorem \ref{class}.

Thus, an important and essential point is to find out a suitable class of prime thick subcategories for a given triangulated category.
In this paper, we do this for the bounded derived category $\db(R)$ of finitely generated $R$-modules, the derived category $\dpf(R)$ of perfect $R$-complexes, and the singularity category $\ds(R):=\db(R)/\dpf(R)$, where $R$ is a commutative noetherian ring.
For such triangulated categories $\T$, we introduce a pair of maps
$$
\xymatrix{
\spc\T\ \ar@<.5mm>[rr]^\pp&&\ \XX_\T,\ar@<.5mm>[ll]^\s
}
$$ 
where we set $\XX_{\db(R)}=\XX_{\dpf(R)}=\spec R$ and $\XX_{\ds(R)} = \sing R$.
We define {\em tame thick subcategories} of $\T$ to be thick subcategories representable by the corresponding support.
A tame thick subcategory is always radical, and it is natural to consider when the converse holds.
Our results include the following.

\begin{thm}[Theorems \ref{bij}, \ref{cmpr}, \ref{dim}, \ref{sgbij}, \ref{10}, \ref{dimsg} and Corollary \ref{pppp}]\
\begin{enumerate}[\rm(1)]
\item
Let $R$ be a commutative noetherian ring, and $\T=\dpf(R)$.
Then the following hold true.
\begin{enumerate}[\rm(i)]
\item
The maps $\pp$ and $\s$ are mutually inverse homeomorphisms.
\item
The tame thick subcategories coincide with the radical ones.
\end{enumerate}
Furthermore, $\st\T$ coincides with the set of Thomason subsets of $\spc\T$.
\item
Let $R$ be a local complete intersection.
Suppose either of the following.
\begin{itemize}
\item
$\T=\db(R)$, $\dim R>0$ and $\PP={\rm regular}$.
\item
$\T=\ds(R)$, $R$ is excellent and $\PP={\rm hypersurface}$.
\end{itemize}
Then the following are equivalent.
\begin{enumerate}[\rm(i)]
\item
The topological spaces $\spc \T$ and $\XX_\T$ have the same Krull dimension.
\item
The topological spaces $\spc \T$ and $\XX_\T$ are homeomorphic.
\item
The maps $\pp$ and $\s$ are mutually inverse homeomorphisms.
\item
The tame thick subcategories coincide with the radical ones.
\item
The ring $R$ is $\PP$.
\end{enumerate}
When these equivalent conditions hold, $\st \T$ coincides with the set of Thomason subsets of $\spc \T$.
\end{enumerate}
\end{thm}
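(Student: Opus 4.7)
The plan is to dismantle the statement into its three natural pieces: the Hopkins--Neeman-type recovery in Part~(1), the formal implications in Part~(2), and the hard dimension-based direction $\text{(i)},\text{(iv)}\Rightarrow\text{(v)}$ in Part~(2), which is where the real obstacle lies.

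For Part~(1), I would recognise the claim as a reformulation of the Hopkins--Neeman classification in the language of $\spc\T$. Given $\p\in\spec R$, set $\s(\p):=\{X\in\dpf(R):\p\notin\supp X\}$; a short check with cones and shifts shows that $\s(\p)\in\spc\T$, so $\s$ is well defined. For the reverse map, let $\pp(\P)$ be the generic point of the Thomason subset $\bigcup_{X\notin\P}\supp X$, which by Hopkins--Neeman is irreducible. Since Hopkins--Neeman already parametrises thick subcategories of $\dpf(R)$ by Thomason subsets of $\spec R$, the maps $\s$ and $\pp$ are mutually inverse bijections, the topologies match tautologically, and every thick subcategory is automatically radical and tame, which yields (i), (ii) and the concluding sentence of Part~(1).

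For Part~(2), the chain $\text{(iii)}\Rightarrow\text{(ii)}\Rightarrow\text{(i)}$ and $\text{(iii)}\Rightarrow\text{(iv)}$ is formal. The implication $\text{(v)}\Rightarrow\text{(iii)}$ then reduces to a known classification: if $R$ is regular then $\db(R)=\dpf(R)$ and Part~(1) applies verbatim; if $R$ is a hypersurface, the second author's classification identifies thick subcategories of $\ds(R)$ with specialization-closed subsets of $\sing R$, and the same formal translation as in Part~(1), now with $\XX_\T=\sing R$ in place of $\spec R$, delivers the homeomorphism. The assumption $\dim R>0$ in the $\db(R)$ branch is only needed to exclude trivial coincidences of dimension between $\spec R$ and $\spc\db(R)$.

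The real content is $\text{(i)}\Rightarrow\text{(v)}$ and $\text{(iv)}\Rightarrow\text{(v)}$, and the main technical hurdle sits here. I would invoke Stevenson's classification of thick subcategories of $\ds(R)$ over a local complete intersection $R$ of codimension $c$: these are parametrised not by $\sing R$ alone but by a larger space carrying extra projective-space directions of dimension $c-1$ over the residue fields, so once $c\ge 2$ one has $\dim\spc\ds(R)>\dim\sing R$, contradicting (i). For the $\db(R)$ case one fibers this local singularity picture over $\spec R$, and whenever $R$ is not regular some residue field carries a nontrivial Stevenson factor and $\dim\spc\db(R)>\dim R$. The parallel implication $\text{(iv)}\Rightarrow\text{(v)}$ is obtained in the same spirit, since forcing every radical thick subcategory to be tame collapses Stevenson's classifying space onto $\sing R$ (resp.\ $\spec R$), which can occur only in codimension $\le 1$ (resp.\ $0$). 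The hardest single step is thus pinning down the exact Krull dimension of $\spc\T$ in terms of Stevenson's classifying variety and ruling out accidental equality with $\dim\XX_\T$ when $R$ fails to be $\PP$; once that dimension formula is secured the equivalences close, and the final sentence of Part~(2) follows from the classification already used in $\text{(v)}\Rightarrow\text{(iii)}$.
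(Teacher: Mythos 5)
Your Part (1) is in the right spirit, but your description of the map $\pp$ is not quite what the paper uses and would not in general be well-defined: you take $\pp(\P)$ to be the generic point of $\bigcup_{X\notin\P}\supp X$, but for a thick subcategory this union is typically huge (it may well be all of $\spec R$) and has no reason to be irreducible. The paper instead defines $\pp(\P)$ as the unique maximal element of $\II(\P)=\{I\mid\K(I)\notin\P\}$ (Definition~\ref{7}), proves this maximal element is prime via the octahedral-axiom argument of Lemma~\ref{prm}, and then characterises primeness by $\supp\P=\{\q\mid\q\nsubseteq\p\}$ in Theorem~\ref{prmsup}. That characterisation is what lets the Hopkins--Neeman identity $\supp_{\dpf(R)}^{-1}(\supp\P)=\P$ close the loop. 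Your phrasing should be reworked along these lines, though the conclusion of Part~(1) is of course correct.

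The serious problem is your treatment of $\text{(i)}\Rightarrow\text{(v)}$ (and hence $\text{(iv)}\Rightarrow\text{(v)}$). You propose to read off $\dim\spc\T$ from Stevenson's classifying variety for thick subcategories of $\ds(R)$ and conclude that once the codimension is at least $2$ the dimension of $\spc\ds(R)$ exceeds that of $\sing R$. But $\spc\ds(R)$ is \emph{not} Stevenson's classifying space: it is the set of thick subcategories whose set $\II(\P)$ has a unique maximal element, with a topology built from nonmembership. There is no dimension formula for this space in the paper, and establishing one would essentially require you to already know which of Stevenson's subcategories are prime in this sense --- which is the content being proved, not an input. This circularity is why your ``hardest single step'' remains genuinely unresolved, not merely technical. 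The paper avoids the issue entirely: for $\db(R)$ it takes a maximal chain $\p_0\subsetneq\cdots\subsetneq\p_d=\m$, applies $\s$, uses $\dim\D=\dim R$ to force $\height\s(\p_{d-1})=1$ and hence $\pp^{-1}(\p_{d-1})=\{\s(\p_{d-1})\}$, and then feeds a maximal Cohen--Macaulay truncation through $\supp_\D^{-1}\{\m\}\subseteq\dpf(R)$ to conclude $\D=\dpf(R)$ (Theorems~\ref{bij}(3),~\ref{dim}); for $\ds(R)$ it shows that a height-one prime thick subcategory is necessarily a cover of $\zero$, argues via Propositions~\ref{med} and~\ref{b} that any non-tame such cover leads to a contradiction, and then invokes the complexity-theoretic criterion of Theorem~\ref{c} (Gulliksen and Bergh) to deduce that $R$ is a hypersurface (Theorem~\ref{dimsg}). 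That combination of covers and complexity is the actual machinery; Stevenson's theorem enters the paper only to guarantee existence of covers in Proposition~\ref{a}, not to compute dimensions of $\spc\T$. You would need either to prove a precise correspondence between $\spc\ds(R)$ and a Stevenson-type support variety, or to switch to the paper's route.
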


We should remark that recently the first author \cite{Mat} has also defined a spectrum of a triangulated category, which is different from a spectrum introduced in the present paper.
More precisely, the spectrum introduced in \cite{Mat} is defined as the set of {\it irreducible subsets} of $\T$ together with a topology, which is uniquely determined by using the triangulated structure of $\T$.
The advantage of a spectrum introduced in the present paper is that there is flexibility in the choice of the underlying set; for a given triangulated category, choosing appropriate prime thick subcategories makes it more manageable.

This paper is organized as follows.
In Section 2, following Balmer's work \cite{Bal05}, for each triangulated category $\T$ we assign a class $\spc \T$ of prime thick subcategories, and define a topology on it. Several analogous notions to Balmer's theory, such as supports and radical thick subcategories, are also introduced.
Later sections are devoted to application to commutative noetherian rings $R$.
The spectra of the derived categories $\db(R),\dpf(R)$ and intermediate triangulated categories are explored in Section 3, and the spectrum of the singularity category $\ds(R)$ is investigated in Section 4 as well.

%%%%%%%%%%%%%%%%%%%%%%%%%%%%%%%%%%%%%%%%%%%%%%%%%%%%%%%%%%%%%%%%%%
\section{Spectra of triangulated categories}

In this section, following Balmer's theory \cite{Bal05}, we define a topological space for a triangulated category which does not necessarily possess a tensor structure, and by using it we give a classification of a certain class of thick subcategories.

Throughout this section, let $\T$ be an essentially small triangulated category.
Denote by $\th\T$ the set of thick subcategories of $\T$.
Fix a subset $\spc \T$ of $\th\T$.
We call elements of $\spc \T$ {\it prime thick subcategories} of $\T$.
First we introduce a topology on the set $\spc \T$.

\begin{dfn}[{cf. \cite[Definition 2.1]{Bal05}}]\label{top}
For a class $\E$ of objects of $\T$, we put
$$
\Z(\E):= \{ \P \in \spc \T \mid \P \cap \E = \emptyset \}.
$$
One can easily check that the following statements hold; note that the fact that $\T$ is closed under direct summands enables the last one to hold.
\begin{itemize}
\item 
$\Z(\T) = \emptyset$ and $\Z(\emptyset) = \spc \T$.
\item 
$\bigcap_{i \in I}\Z(\E_i) = \Z(\bigcup_{i \in I} \E_i)$ for a family $\{\E_i\}_{i\in I}$ of classes of objects of $\T$.
\item 
$\Z(\E) \cup \Z(\E') = \Z(\E \oplus \E')$ for classes $\E,\E'$ of objects of $\T$, where $\E\oplus\E':=\{X\oplus X'\mid X\in\E,\,X'\in\E'\}$.
\end{itemize}
Thus $\spc\T$ is a topological space with the closed subsets $\Z(\E)$.
We then call $\spc \T$ the {\it spectrum} of $\T$.\footnote{For a tensor triangulated category $\T$, the spectrum of $\T$ in our sense does not necessarily coincide with the Balmer spectrum $\bspc\T$ of $\T$. To avoid confusion, instead of $\bspc\T$ we adopt the notation $\spc\T$, which comes from the Spanish translation {\em espectro} of spectrum.}

For an object $M \in \T$, we define its {\it support} by
$$
\spp M := \Z(\{M \}) = \{\P \in \spc \T \mid M \not\in \P \}.
$$
It directly follows by definition that the equality $\Z(\E) = \bigcap_{M \in \E} \spp M$ holds for each $\E$.
This shows that the family $\{\spp M\}_{M \in \T}$ of closed subsets forms a closed basis of $\spc \T$.
For a thick subcategory $\X$ of $\T$, we define its {\it support} by
$$
\spp \X := \bigcup_{M \in \X} \spp M=\{\P\in\spc\T\mid\X\nsubseteq\P\},
$$
which is a specialization-closed subset of $\spc \T$.
\end{dfn}

For a full subcategory $\E$ of $\T$, we denote by $\thick \E$ the smallest thick subcategory of $\T$ containing $\E$.
We provide some basic properties of supports, which says that the pair $(\spc \T, \spp)$ is a support data for $\T$ in the sense of \cite{Mat}.

\begin{rem}\label{11}
\begin{enumerate}[\rm(1)]
\item 
$\spp(0) = \emptyset$.
\item 
$\spp(M[n]) = \spp M$ for $M \in \T$ and $n \in \mathbb{Z}$.
\item 
$\spp(M \oplus N) = \spp M \cup \spp N$ for $M, N \in \T$.
\item 
$\spp M\subseteq \spp L\cup \spp N$ for an exact triangle $L \to M \to N \to L[1]$ in $\T$.
\end{enumerate}
\noindent
In particular, for a subset $S$ of $\spc\T$, the full subcategory
$$
\spp^{-1}S:=\{M\in\T\mid\spp M\subseteq S\}
$$
of $\T$ is a thick subcategory, which implies that $\spp\X=\spp(\thick\X)$ for each full subcategory $\X$ of $\T$.
\end{rem}

\begin{proof}
This is a direct consequence of the definition of a thick subcategory.
Indeed, the fact that a thick subcategory contains the zero object shows (1), the fact that it is closed under shifts shows (2), the fact that it is closed under finite direct sums and direct summands shows (3), and the fact that it satisfies the 2-out-of-3 property shows (4).
\end{proof}

The following proposition is proved in \cite{Bal05} for Balmer spectra of tensor triangulated categories.
We can prove the same statement for our topological space $\spc \T$ and the proof is completely the same.

\begin{prop}[{cf. \cite[Proposition 2.9]{Bal05}}] \label{t0}
For any prime thick subcategory $\P$ of $\T$, one has
$$
\overline{\{\P\}} = \{\Q \in \spc \T \mid \Q \subseteq \P \}.
$$
In particular, $\spc \T$ is a $T_0$-space.
\end{prop}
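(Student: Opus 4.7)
The plan is to exploit the fact, recorded in Definition \ref{top}, that the family $\{\spp M\}_{M\in\T}$ forms a closed basis for the topology on $\spc\T$ (this is witnessed there by the identity $\Z(\E)=\bigcap_{M\in\E}\spp M$). Consequently the closure $\overline{\{\P\}}$ of a singleton is the intersection of all basic closed sets containing $\P$, and the task reduces to identifying which basic closed sets these are and then simplifying the resulting intersection.

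By the very definition of the support, $\P\in\spp M$ is equivalent to $M\notin\P$. Running $M$ over $\T\setminus\P$ therefore yields
\[
\overline{\{\P\}}=\bigcap_{M\in\T\setminus\P}\spp M=\{\Q\in\spc\T\mid M\notin\Q\ \text{for every}\ M\in\T\setminus\P\}.
\]
The condition on the right-hand side is the contrapositive of the implication ``$M\in\Q\Rightarrow M\in\P$'', which is precisely the inclusion $\Q\subseteq\P$ of full subcategories. This establishes the displayed formula.

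For the $T_0$ statement I would simply feed this description into the standard reformulation ``distinct points have distinct closures''. If $\P,\Q\in\spc\T$ are distinct prime thick subcategories, then $\P\not\subseteq\Q$ or $\Q\not\subseteq\P$, so by the formula $\overline{\{\P\}}\neq\overline{\{\Q\}}$; concretely, picking any object $M$ lying in one of $\P,\Q$ but not the other, the basic closed set $\spp M$ contains exactly one of them, giving the required topological separation.

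I do not anticipate a genuine obstacle here, since once the closed-basis description is unwound the proof is purely formal. The only minor thing to keep in mind is the degenerate case $\P=\T$, in which $\T\setminus\P=\emptyset$ and the intersection is interpreted as the whole space $\spc\T$; this is consistent with the claimed formula because every thick subcategory $\Q$ is contained in $\T$.
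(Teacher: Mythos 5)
Your proposal is correct and follows essentially the same route as the paper: you compute $\overline{\{\P\}}=\bigcap_{M\in\T\setminus\P}\spp M=\Z(\P^\complement)$, which is exactly the closed set $S=\Z(\P^\complement)$ that the paper exhibits directly as the smallest closed set containing $\P$. The only cosmetic difference is that you invoke the closed-basis description of closures first, whereas the paper verifies minimality of $\Z(\P^\complement)$ by hand; both arguments, and the $T_0$ deduction, are the same in substance.
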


\begin{proof}
Let $S$ be the right-hand side.
Then $\P\in S=\Z(\P^\complement)$.
If $\Z(\E)$ contains $\P$, then $\P\cap\E=\emptyset$, and $\Q\cap\E=\emptyset$ for all $\Q\in S$, which shows $S\subseteq\Z(\E)$.
Now the assertion follows.
\end{proof}

Next, we define the height of a prime thick subcategory and the dimension of $\T$.
 
\begin{dfn}
For a prime thick subcategory $\P$ of $\T$, we define the {\it height} $\height \P$ of $\P$ to be the largest number $n$ such that there exists a chain $\P_0 \subsetneq \P_1 \subsetneq \cdots \subsetneq \P_n =\P$ of prime thick subcategories of $\T$.
We define the {\it dimension} $\dim \T$ of $\T$ as the supremum of the heights of prime thick subcategories of $\T$.
\end{dfn}

Let $X$ be a topological space.
Recall that the {\it Krull dimension} $\dim X$ of $X$ is defined as the supremum of integers $n\ge0$ such that there exists a chain $Z_0 \subsetneq Z_1 \subsetneq \cdots \subsetneq Z_n$ of irreducible closed subsets of $X$.
Recall also that $X$ is {\em sober} if every irreducible closed subset of $X$ is the closure of exactly one point of $X$.
A typical example of a sober topological space is the Zariski spectrum $\spec R$ of a commutative ring $R$.

\begin{rem}\label{r}
By Proposition \ref{t0}, for two prime thick subcategories $\P, \Q$ of $\T$ one has $\overline{\{\P\}} \subsetneq \overline{\{\Q\}}$ if and only if $\P \subsetneq \Q$.
This shows $\dim\T\le\dim\spc \T$.
Moreover, if $\spc \T$ is sober, then $\dim \spc \T = \dim \T$. 
\end{rem}

Using the spectrum of $\T$, we can classify a certain class of thick subcategories characterized by $\spc \T$, which we call radical thick subcategories.

\begin{dfn}[{cf. \cite[Lemma 4.2]{Bal05}}]\label{radef}
For a thick subcategory $\X$ of $\T$, define the {\it radical} of $\X$ by 
$$
\sqrt{\X} := \bigcap_{\P \in \spc \T,\,\X \subseteq \P} \P.
$$
We say that a thick subcategory $\X$ is {\it radical} if $\sqrt{\X} = \X$.
Denote by $\rad \T$ the set of radical thick subcategories of $\T$.
\end{dfn}

The following proposition tells us that the support cannot distinguish a thick subcategory and its radical.
Thus to classify thick subcategories by their supports we focus on the radical thick subcategories.

\begin{prop}\label{radsp}
\begin{enumerate}[\rm(1)]
\item
For $M\in\T$ one has $\spp M=\emptyset$ if and only if $M\in\sqrt{0}$.
\item
For each $\X \in \th\T$ there is an equality $\spp \sqrt{\X} = \spp \X$.
\end{enumerate}
\end{prop}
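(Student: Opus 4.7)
The plan is that both parts follow by unpacking the definitions, so I do not expect any genuine obstacle; the work is bookkeeping around the set $\{\P \in \spc \T : \X \subseteq \P\}$ that defines $\sqrt{\X}$.

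For part (1), I would simply rewrite $\spp M = \emptyset$ using the equivalent form $\spp M = \{\P \in \spc \T \mid M \notin \P\}$ given in Definition~\ref{top}: the condition $\spp M = \emptyset$ says exactly that $M$ belongs to every prime thick subcategory. Since every thick subcategory of $\T$ contains the zero subcategory, the index set in the definition $\sqrt{0} = \bigcap_{\P \in \spc \T,\,0 \subseteq \P}\P$ is all of $\spc \T$, so $\sqrt{0} = \bigcap_{\P \in \spc \T}\P$. This gives the desired equivalence $\spp M = \emptyset \iff M \in \sqrt{0}$.

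For part (2), I would establish the two inclusions separately. The inclusion $\spp \X \subseteq \spp \sqrt{\X}$ is free from the obvious containment $\X \subseteq \sqrt{\X}$: any prime $\P$ with $\X \not\subseteq \P$ satisfies $\sqrt{\X} \not\subseteq \P$ as well. For the reverse inclusion $\spp \sqrt{\X} \subseteq \spp \X$, I would argue by contrapositive. If $\P \in \spc \T$ satisfies $\X \subseteq \P$, then $\P$ is one of the subcategories appearing in the intersection $\sqrt{\X} = \bigcap_{\Q \in \spc \T,\,\X \subseteq \Q} \Q$, so $\sqrt{\X} \subseteq \P$, i.e.\ $\P \notin \spp \sqrt{\X}$. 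Combining both directions gives the equality.

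The only step that even requires a moment's thought is noticing that the index set defining $\sqrt{0}$ is all of $\spc \T$, and this is immediate from the definition of a thick subcategory. No triangulated machinery is needed beyond what is already encoded in Definition~\ref{radef}; in particular, Remark~\ref{11} is not invoked.
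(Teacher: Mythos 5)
Your proposal is correct and follows essentially the same route as the paper: unwind the definitions of $\spp$ and $\sqrt{\X}$ and observe that for any $\P \in \spc\T$, one has $\X \subseteq \P$ if and only if $\sqrt{\X} \subseteq \P$. The paper states this equivalence in one line where you split it into two inclusions, but the content is identical.
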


\begin{proof}
(1) This is straightforward from the equality $\sqrt{0}=\bigcap_{\P \in \spc \T}\P$.

(2) Fix any $\P \in \spc \T$.
One then has $\P \in \spp \X$ if and only if $\X \not\subseteq \P$, if and only if $\sqrt{\X} \not\subseteq \P$, if and only if $\P \in \spp \sqrt{\X}$.
\end{proof}

We define the {\em parameter set} $\st\T$ as the set of supports of thick subcategories of $\T$:
$$
\st\T:=\{\spp \X \mid \X \in \th\T \}\subseteq2^{\spc\T}.
$$
The reason why we call this so is that it parametrizes the radical thick subcategories of $\T$ as follows.

\begin{thm}\label{class}
There is a one-to-one correspondence
$$
\xymatrix{
\rad\T \ar@<0.5ex>[r]^-{\spp} &
\st\T\ar@<0.5ex>[l]^-{\spp^{-1}}. 
}
$$
\end{thm}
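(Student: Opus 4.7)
The plan is to verify directly that $\spp$ and $\spp^{-1}$ are mutually inverse, after first checking that they land in the claimed codomains. That $\spp$ lands in $\st\T$ is automatic from the definition of the parameter set. For the reverse map, given $S\in\st\T$, the set $\spp^{-1}S$ is already known to be a thick subcategory by Remark \ref{11}; so the only non-formal check is that it is radical.

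For radicality of $\spp^{-1}S$, let $\Y'=\spp^{-1}S$. First I will show the equality $\spp\spp^{-1}S=S$ (the second half of the bijection), from which radicality of $\Y'$ follows: combining this identity with Proposition \ref{radsp}(2) gives $\spp\sqrt{\Y'}=\spp\Y'=S$, and then any $M\in\sqrt{\Y'}$ satisfies $\spp M\subseteq\spp\sqrt{\Y'}=S$, hence $M\in\spp^{-1}S=\Y'$, so $\sqrt{\Y'}=\Y'$.

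For the two round-trip identities I would proceed as follows.

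\emph{Identity $\spp\spp^{-1}S=S$ for $S=\spp\Y\in\st\T$.} The inclusion $\subseteq$ is immediate: if $\P\in\spp\spp^{-1}S$, there exists $M\in\spp^{-1}S$ with $M\notin\P$, and then $\P\in\spp M\subseteq S$. Conversely, if $\P\in S=\spp\Y$, choose $N\in\Y$ with $N\notin\P$; since $N\in\Y$ gives $\spp N\subseteq\spp\Y=S$, we have $N\in\spp^{-1}S$, witnessing $\P\in\spp\spp^{-1}S$.

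\emph{Identity $\spp^{-1}\spp\X=\X$ for $\X\in\rad\T$.} The inclusion $\supseteq$ is trivial since $M\in\X$ forces $\spp M\subseteq\spp\X$. For $\subseteq$, take $M$ with $\spp M\subseteq\spp\X$; using the definition $\sqrt{\X}=\bigcap_{\P\in\spc\T,\,\X\subseteq\P}\P$, it suffices to show $M\in\P$ for every prime $\P$ containing $\X$. If instead $M\notin\P$, then $\P\in\spp M\subseteq\spp\X$, giving $\X\not\subseteq\P$, a contradiction. Hence $M\in\sqrt{\X}=\X$, where the final equality uses that $\X$ is radical.

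The only place where any non-formal input is needed is in the radicality argument, which rests on Proposition \ref{radsp}(2); everything else is an unwinding of the definitions. Thus no real obstacle is anticipated, and I expect the proof to be genuinely short, essentially a bookkeeping verification built on the characterization of $\sqrt{\,\cdot\,}$ as an intersection of primes together with the support identity for radicals.
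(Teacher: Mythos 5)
Your proof is correct and follows essentially the same route as the paper: both rest on unwinding the definitions to see that $\spp^{-1}(\spp\X)$ is the intersection of primes containing $\X$, namely $\sqrt{\X}$, and both invoke Proposition \ref{radsp}(2) exactly once. The paper states the general identity $\spp^{-1}(\spp\X)=\sqrt{\X}$ up front and deduces everything from it, while you split the argument into the two round-trip identities and use Proposition \ref{radsp}(2) to obtain radicality of $\spp^{-1}S$ rather than the second round trip; this is a cosmetic reorganization rather than a different argument.
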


\begin{proof}
Let $\X\in\th\T$ and $M\in\T$.
The condition $\spp M \subseteq \spp \X$ is equivalent to saying that for each $\P \in \spc \T$, if $\P$ contains $\X$, then $\P$ also contains $M$.
Therefore we have equalities $\spp^{-1}(\spp \X) = \bigcap_{\X \subseteq \P} \P = \sqrt{\X}$.
This shows that $\spp^{-1} : \st\T \to \rad\T$ is a well-defined map, and that it is a retraction of the map $\spp : \rad\T \to \st\T$.
Applying Proposition \ref{radsp}(2), we get equalities $\spp (\spp^{-1} (\spp \X)) = \spp \sqrt{\X} = \spp \X$, which completes the proof of the theorem.
\end{proof}

A subset of a topological space $X$ is said to be {\it Thomason} if it is the union of some closed subsets of $X$ whose complements are quasi-compact.
We denote by $\thom X$ the set of Thomason subsets of $X$.
Theorems \ref{class} and \ref{0Balcls} naturally lead us to the following question.

\begin{ques}\label{5}
Can we characterize the parameter set $\st\T$ in terms of the topology of $\spc\T$?
More specifically, does the equality $\st\T= \thom(\spc \T)$ hold?
\end{ques}
\noindent
We will give partial answers to this question in Sections 3 and 4.

%%%%%%%%%%%%%%%%%%%%%%%%%%%%%%%%%%%%%%%%%%%%%%%%%%%%%%%%%%%%%
\section{Spectra of derived categories}
In this section, we consider what we got in the previous section for derived categories of commutative noetherian rings, that is, we discuss those spectra.

Throughout this section, let $R$ be a commutative noetherian ring, and let $\D$ be a triangulated subcategory of the bounded derived category $\db(R)$ of finitely generated $R$-modules containing the derived category $\dpf(R)$ of perfect $R$-complexes.
The purpose of this section is to investigate $\spc\D$.
To begin with, we recall the notion of a homological support and its basic properties for later use.

\begin{dfn}
For each object $M \in \D$, the {\it homological support} of $M$ is defined by
$$
\supp M := \{\p \in \spec R \mid\H(M)_\p \ne 0 \}=\{\p \in \spec R \mid M_\p \not\cong 0 \}.
$$
For a full subcategory $\X \subseteq \D$, the {\it homological support} of $\X$ is defined by
$$
\supp\X := \bigcup_{M \in \X} \supp M.
$$
\end{dfn}

For an ideal $I$ of $R$, let $\K(I)$ stand for the Koszul complex of a system of generators of $I$.
We can easily verify that the homological support satisfies the following properties.

\begin{rem}\label{2}
\begin{enumerate}[\rm(1)]
\item
For an ideal $I$ of $R$ it holds that $\supp\K(I)=\V(I)$.
\item
For $M\in\D$ the set $\supp M$ is Zariski-closed (i.e. closed in $\spec R$).
\item 
For $M\in\D$ one has $\supp M = \emptyset$ if and only if $M \cong 0$ in $\D$.
\item
For $M\in\D$ one has $\supp(M[1])=\supp M$.
\item 
For $M,N\in\D$ there is an equality $\supp(M \oplus N) = \supp M \cup \supp N$.
\item 
For an exact triangle $L \to M \to N \to L[1]$ in $\D$ there is an inclusion $\supp M\subseteq \supp L\cup \supp N$.
\end{enumerate}
\noindent
In particular, for a subset $W$ of $\spec R$, the full subcategory
$$
\supp^{-1}_{\D}(W) := \{M \in \D \mid \supp M \subseteq W \}
$$
of $\D$ is a thick subcategory of $\D$.
\end{rem}

For a partially ordered set $S$, let $\max S$ (resp. $\min S$) be the set of maximal (resp. minimal) elements of $S$.
We always regard a set of sets as a partially ordered set with respect to the inclusion relation.

To define the spectrum of $\D$, we have to fix a class of prime thick subcategories.
We adopt the following definition of prime thick subcategories of $\D$.

\begin{dfn}\label{7}
For $\X \in \th\D$, we define the set $\II(\X)$ of ideals of $R$ by
$$
\II(\X):=\{I \subseteq R \mid \K(I) \not\in \X\}.
$$
This is well-defined since the condition $\K(I) \not\in \X$ is independent of the choice of a system of generators of $I$; see \cite[Proposition 1.6.21]{BH}.
We say that a thick subcategory $\P$ of $\D$ is {\it prime} if $\II(\P)$ has a unique maximal element, i.e., $\#\max\II(\P)=1$.
When this is the case, we denote by $\pp(\P)$ the maximal element of $\II(\P)$.
Let $\spc \D$ denote the set of prime thick subcategories of $\D$.
We equip $\spc\D$ with the topology defined in Definition \ref{top}.
\end{dfn}

\begin{rem}
The motivation of this definition of a prime thick subcategory of $\D$ comes from \cite[Proposition 3.7]{MT}, which states that $\II(\P)$ has a unique maximal element for a prime thick tensor ideal $\P$ of the right bounded derived category $\dm(R)$ of finitely generated $R$-modules.
\end{rem}

Next, we give a typical example of prime thick subcategories.
For each prime ideal $\p$ of $R$, we define the full subcategory $\s(\p)$ of $\D$ by
$$
\s(\p):= \{M \in \D \mid M_\p \cong 0\}.
$$

\begin{lem}\label{sprm2}
Let $\p\in\spec R$.
Then $\s(\p)$ is a prime thick subcategory of $\D$ with $\pp(\s(\p)) = \p$.
\end{lem}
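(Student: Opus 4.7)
The plan is to prove the two assertions separately: that $\s(\p)$ is a thick subcategory of $\D$, and that $\II(\s(\p))$ admits $\p$ as its unique maximal element. For thickness, I would appeal to the fact that the localization functor $(-)_\p\colon\D\to\db(R_\p)$ is exact and additive. Since $\s(\p)$ is defined as the class of objects sent to $0$ by this functor, it is automatically closed under shifts, finite direct sums, direct summands, and the two-out-of-three property on exact triangles, which together yield thickness.

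For the second part, I would determine explicitly which ideals $I$ of $R$ satisfy $\K(I)\in\s(\p)$, i.e.\ $\K(I)_\p\cong 0$. Koszul complexes localize well: $\K(I)_\p$ is the Koszul complex over $R_\p$ on the images of the chosen generators of $I$. If $I\not\subseteq\p$, pick some $x\in I\setminus\p$; then $x/1$ is a unit in $R_\p$, so the factor $\K(x)_\p\cong(R_\p\xrightarrow{x/1}R_\p)$ is contractible. Since the Koszul complex is the tensor product of such factors, $\K(I)_\p$ itself is contractible, hence zero in $\db(R_\p)$. Conversely, if $I\subseteq\p$, then $H_0(\K(I)_\p)=(R/I)_\p\ne 0$, because $\p/I$ is a prime of $R/I$, so $\K(I)_\p\not\cong 0$.

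Combining the two cases gives $\II(\s(\p))=\{I\subseteq R\mid I\subseteq\p\}$, whose unique maximal element is $\p$ itself. This at once yields $\#\max\II(\s(\p))=1$ and $\pp(\s(\p))=\p$, completing the argument. The only delicate bookkeeping is the localization behavior of Koszul complexes together with their well-definedness up to quasi-isomorphism independently of the choice of generators, both of which are standard and follow from \cite[Proposition 1.6.21]{BH} already cited in Definition \ref{7}; I anticipate no substantial obstacle beyond this.
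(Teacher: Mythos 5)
Your proof is correct and takes essentially the same approach as the paper: establish thickness of $\s(\p)$ and then compute $\II(\s(\p))=\{I\mid I\subseteq\p\}$ to read off the unique maximal element. The paper packages your Koszul computation as the identity $\supp\K(I)=\V(I)$ (Remark \ref{2}(1)) and obtains thickness from the description $\s(\p)=\supp_\D^{-1}\{\q\mid\q\nsubseteq\p\}$ rather than from the exactness of the localization functor, but these are only surface differences.
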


\begin{proof}
It is easy to check that
\begin{equation}\label{1}
\s(\p)= \supp_{\D}^{-1} \{\q \in \spec R \mid \q \nsubseteq \p \},
\end{equation}
which especially says that $\s(\p)$ is a thick subcategory of $\D$.
For an ideal $I$ of $R$, one has $\K(I)\not\in\s(\p)$ if and only if $\K(I)_\p \not\cong 0$, if and only if $\p \in \supp \K(I) = \V(I)$.
Hence $\II(\s(\p))= \{I \subseteq R \mid \p \in \V(I)\}$, and we see that $\max\II(\s(\p))=\{\p\}$.
\end{proof}

Lemma \ref{sprm2} says that $\s$ assigns to each prime ideal of $R$ a prime thick subcategory of $\D$.
Conversely, $\pp$ also assigns to each prime thick subcategory of $\D$ a prime ideal of $R$. 

\begin{lem}\label{prm}
For a thick subcategory $\X$ of $\D$ every maximal element of $\II(\X)$ is a prime ideal of $R$.
In particular, $\pp(\P)$ is a prime ideal of $R$ for any prime thick subcategory $\P$ of $\D$.
\end{lem}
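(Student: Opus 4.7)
The plan is to argue by contrapositive: supposing $I$ is a maximal element of $\II(\X)$ but is not prime, I will derive the contradiction $\K(I)\in\X$. Note first that $R\notin\II(\X)$ since $\K(R)\simeq 0\in\X$, so $I$ is a proper ideal of $R$, and non-primality supplies $a,b\in R\setminus I$ with $ab\in I$. Both $I+(a)$ and $I+(b)$ strictly contain $I$, so the maximality of $I$ in $\II(\X)$ forces $\K(I+(a)),\K(I+(b))\in\X$.

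The key step is to identify these Koszul complexes, up to quasi-isomorphism, with mapping cones on $\K(I)$. Using $\K(I,x)\simeq\K(I)\otimes_R\K(x)$ with $\K(x)=(R\xrightarrow{x}R)$, the complexes $\K(I,a)$, $\K(I,b)$, and $\K(I,ab)$ each fit into an exact triangle of the form
$$
\K(I)\xrightarrow{\,x\,}\K(I)\to\K(I,x)\to\K(I)[1].
$$
Applying the octahedral axiom to the composition $\K(I)\xrightarrow{b}\K(I)\xrightarrow{a}\K(I)$ then produces an exact triangle
$$
\K(I,b)\to\K(I,ab)\to\K(I,a)\to\K(I,b)[1],
$$
and since $\X$ is thick with the two flanking terms in $\X$, we get $\K(I,ab)\in\X$.

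To close the argument, I invoke the classical property of Koszul complexes that multiplication by any element of $I$ is null-homotopic on $\K(I)$; in particular $ab\cdot\mathrm{id}_{\K(I)}=0$ in $\D$. Consequently the defining triangle of $\K(I,ab)$ splits, giving $\K(I,ab)\cong\K(I)\oplus\K(I)[1]$, and closure of $\X$ under direct summands forces $\K(I)\in\X$. This contradicts $I\in\II(\X)$ and completes the proof of the main statement. The ``in particular'' clause is then immediate: by definition $\pp(\P)$ is the unique maximal element of $\II(\P)$, so it is prime by what we have just shown.

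The main obstacle I foresee is the careful identification of $\K(I,x)$ with the mapping cone of multiplication by $x$ on $\K(I)$ uniformly for $x\in\{a,b,ab\}$, together with aligning this identification against the triangle produced by the octahedral axiom. Once these identifications are set up correctly, the null-homotopy of $ab$ on $\K(I)$ closes the argument without further difficulty.
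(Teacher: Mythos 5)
Your argument is correct and essentially reproduces the paper's proof. The octahedral triangle $\K(I,b)\to\K(I,ab)\to\K(I,a)\to\K(I,b)[1]$ is (up to relabeling $a$ and $b$) exactly the triangle the paper obtains by tensoring $\K(I)$ with the base triangle $\K(a)\to\K(ab)\to\K(b)\to\K(a)[1]$, and your contrapositive phrasing is logically the same. The one local difference is your final step: the paper simply notes that $I+(ab)=I$, so $\K(I+(ab))=\K(I)\notin\X$, using the independence of $\K(-)\in\X$ from the choice of a generating set (cited to Bruns--Herzog), whereas you derive the splitting $\K(I,ab)\cong\K(I)\oplus\K(I)[1]$ explicitly from the null-homotopy of $ab\in I$ on $\K(I)$; this makes the underlying Koszul fact self-contained rather than cited, but it is the same fact doing the work.
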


\begin{proof}
Take a maximal element $\p$ of $\II(\X)$.
Note then that $\p\ne R$.
Consider elements $a, b \in R$ with $ab \in \p$.
By the octahedral axiom, there is an exact triangle $\K(a) \to \K(ab) \to \K(b) \to \K(a)[1]$ in $\D$.
Tensoring $\K(\p)$ with this, we obtain an exact triangle
$
\K(\p+(a)) \to \K(\p+(ab)) \to \K(\p+(b)) \to \K(\p+(a))[1]
$
in $\D$.
Since $\p+(ab) = \p$, the complex $\K(\p+(ab))$ does not belong to $\X$.
Therefore, either $\K(\p+(a))$ or $\K(\p+(b))$ is outside $\X$.
The maximality of $\p$ implies that $a \in \p$ or $b \in \p$.
\end{proof}

Lemmas \ref{sprm2} and \ref{prm} lead us to the construction of a pair of maps of topological spaces
$$
\pp: \spc \D \rightleftarrows \spec R :\s.
$$
These maps are basic tools to investigate the structure of $\spc \D$ by comparison with $\spec R$.

It is hard to check from the definition whether a given thick subcategory is prime or not.
We give a useful characterization of prime thick subcategories in terms of their homological supports.

\begin{thm}\label{prmsup}
Assume either
{\rm(a)} $\D = \dpf(R)$, or
{\rm(b)} $R$ is a local complete intersection.
Then for a thick subcategory $\P$ of $\D$ it holds that
$$
\text{$\P$ is prime}
\quad\Longleftrightarrow\quad
\text{$\supp \P = \{ \q \in \spec R \mid \q \not\subseteq \p \}$ for some $\p \in \spec R$.}
$$
When the latter condition holds, one has $\p = \pp(\P)$.
\end{thm}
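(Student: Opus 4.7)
The plan is to analyze the two implications separately, using throughout that $\II(\P) = \II(\P \cap \dpf(R))$, since Koszul complexes always lie in $\dpf(R) \subseteq \D$. The proof hinges on a \emph{generation lemma}: $\K(\p) \in \thick_{\D}(M)$ whenever $\p \in \supp M$. In case (a) this is immediate from Hopkins-Neeman, namely $\thick_{\dpf(R)}(M) = \supp_{\dpf(R)}^{-1}(\supp M)$ combined with $\supp \K(\p) = \V(\p) \subseteq \supp M$ (using that $\supp M$ is Zariski-closed).

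For the forward direction, fix $\P$ prime with $\pp(\P) = \p$. I first verify $\{\q \mid \q \not\subseteq \p\} \subseteq \supp \P$: given $\q \not\subseteq \p$, pick $a \in \q \setminus \p$; by ACC every ideal in $\II(\P)$ lies below some maximal element, and by uniqueness of the maximum $\p$, the condition $(a) \not\subseteq \p$ forces $(a) \notin \II(\P)$, hence $\K(a) \in \P$, placing $\q \in \V(a) = \supp \K(a) \subseteq \supp \P$. For the reverse inclusion, suppose some $M \in \P$ has $\q \in \supp M$ with $\q \subseteq \p$; then $\p \in \supp M$ by Zariski-closedness, and the generation lemma yields $\K(\p) \in \thick_{\D}(M) \subseteq \P$, contradicting $\p \in \II(\P)$.

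For the backward direction, assume $\supp \P = W := \{\q \mid \q \not\subseteq \p\}$ for some $\p \in \spec R$. If $I \subseteq \p$, then $\p \in \V(I) \setminus W$, so $\V(I) \not\subseteq \supp \P$; had $\K(I) \in \P$ then $\V(I) = \supp \K(I) \subseteq \supp \P$, contradiction, so $I \in \II(\P)$. If instead $I \not\subseteq \p$, pick $a \in I \setminus \p$, giving $\V(I) \subseteq \V(a) \subseteq W$. For each $\q \in W$ the generation lemma provides $\K(\q) \in \thick_\D(M_\q) \subseteq \P$ for some $M_\q \in \P$ with $\q \in \supp M_\q$, so $\supp(\P \cap \dpf(R)) = W$; Hopkins-Neeman in $\dpf(R)$ then forces $\P \cap \dpf(R) = \supp^{-1}_{\dpf(R)}(W)$, which contains $\K(I)$, so $\K(I) \in \P$ and $I \notin \II(\P)$. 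Thus $\II(\P) = \{I \mid I \subseteq \p\}$, whose unique maximum $\p$ is automatically prime by Lemma \ref{prm}, confirming that $\P$ is prime with $\pp(\P) = \p$.

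The main obstacle is the generation lemma in case (b): for $R$ a local complete intersection and $M \in \D$ possibly non-perfect, proving $\K(\p) \in \thick_{\D}(M)$ whenever $\p \in \supp M$. Case (a) is handled uniformly by Hopkins-Neeman, but case (b) demands input specific to complete intersections---plausibly via truncation triangles to reduce to a single cohomology module, a prime filtration to replace it by residue rings $R/\q$ with $\q \subseteq \p$, and then classification results on the singularity-category side (such as Stevenson's theorem) or Avramov-Buchweitz support-variety machinery to exhibit $\K(\p)$ inside $\thick_\D(M)$.
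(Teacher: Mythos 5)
Your overall architecture mirrors the paper's: reduce everything to the single ``generation lemma'' that $\K(\p)\in\thick_\D(M)$ whenever $\p\in\supp M$ (equivalently, $\K(I)\in\X$ whenever $\V(I)\subseteq\supp\X$), handle case (a) by Hopkins--Neeman, and then read off $\II(\P)$ as $\{I\mid I\subseteq\p\}$ from the shape of $\supp\P$. The forward direction, the use of ACC to locate $(a)$ under a maximal element of $\II(\P)$, the appeal to Zariski-closedness of $\supp M$, and the backward direction via $\II(\P)=\{I\mid I\subseteq\p\}$ together with Lemma~\ref{prm} are all correct and essentially what the paper does (the paper packages the bookkeeping through steps (1), (2), (4) and (5) of its proof, but the content is the same).

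The genuine gap is exactly where you flag it: you do not prove the generation lemma in case (b), and the sketch you offer is unlikely to close it. The paper's proof of step (3) in case (b) is short and relies on two specific inputs: Pollitz's theorem \cite[Theorem 5.2]{Pol}, which says that over a local complete intersection every object of $\db(R)$ is \emph{proxy small}, and Dwyer--Greenlees--Iyengar \cite[Proposition 4.4]{DGI}, which says that for a proxy small $M$ with $\V(I)\subseteq\supp M$ one has $\K(I)\in\thick M$. Your proposed route via a prime filtration of a cohomology module runs in the wrong direction for thick generation: a filtration of $M$ with subquotients $R/\q_i$ shows $M\in\thick\{R/\q_i\}$, not $R/\q_i\in\thick M$, so it cannot by itself put $\K(\p)$ inside $\thick_\D(M)$. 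Stevenson's classification concerns the singularity category and would still leave you to bridge the $\dpf(R)$-part of $\thick_\D(M)$; this is precisely the ``finiteness in derived categories'' problem that proxy smallness is designed to handle. Without invoking proxy smallness (or an equivalent), the complete-intersection case of the theorem remains unproved in your write-up.
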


\begin{proof}
We prove the theorem step by step.

(1) Let $W$ be a specialization-closed subset of $\spec R$, and let $\p$ be a prime ideal of $R$.
Then it is straightforward that $\max(W^\complement)=\{\p\}$ if and only if $W= \{ \q \in \spec R \mid \q \not\subseteq \p \}$.

(2) Let $\X$ be a thick subcategory of $\D$.
Then $\max(\II(\X)\cap\spec R)=\max\II(\X)$.
In fact, by Lemma \ref{prm} implies there are inclusions $\max\II(\X)\subseteq\II(\X)\cap\spec R\subseteq\II(\X)$.
It remains to note the general fact that for two partially ordered sets $A,B$ with $\max B\subseteq A\subseteq B$ one has $\max A=\max B$.

(3) For each thick subcategory $\X$ of $\D$, the equality $\II(\X) = \{I \subseteq R \mid \V(I) \not\subseteq \supp \X\}$ holds.
In fact, case (a) follows from \cite[Theorem 1.5]{Nee}.
Case (b) will follow if we show that $\K(I) \in \X$ if and only if $\V(I) \subseteq \supp \X$ for an ideal $I$ of $R$ and a thick subcategory $\X$ of $\D$.
Taking the homological supports shows the `only if' part.
For the `if' part, as there exist only finitely many minimal primes of $I$, we can find a complex $M\in\X$ such that $\V(I)\subseteq\supp M$.
Since $R$ is assumed to be a local complete intersection, it follows from \cite[Theorem 5.2]{Pol} that $M$ is {\em proxy small}, and by \cite[Proposition 4.4]{DGI} the Koszul complex $\K(I)$ is in $\thick M$, and hence it belongs to $\X$.

(4) For $\X\in\th\D$ one has $\II(\X)\cap\spec R=(\supp\X)^\complement$.
Indeed, for a prime ideal $\p$ of $R$ it holds that $\V(\p)\nsubseteq\supp\X$ if and only if $\p\in(\supp\X)^\complement$.
Using (3), we deduce the equality.

(5) Now let us show the assertion of the theorem.
Let $\P$ be a thick subcategory of $\D$, and let $\p$ be a prime ideal of $R$.
Then $\supp\P=\{\q\in\spec R\mid\q\nsubseteq\p\}$ if and only if $\max((\supp\P)^\complement)=\{\p\}$ by (1), if and only if $\max(\II(\P)\cap\spec R)=\{\p\}$ by (4), if and only if $\max\II(\P)=\{\p\}$ by (2), if and only if the thick subcategory $\P$ is prime with $\pp(\P)=\{\p\}$.
\end{proof}

Here we consider composition and continuity of the maps $\pp$ and $\s$.

\begin{cor}\label{maps2}
\begin{enumerate}[\rm(1)]
\item 
The maps $\s$ and $\pp$ are order-reversing with $\pp \cdot \s = 1$.
Moreover, $\s$ is continuous.
\item
Assume either that $\D = \dpf(R)$, or that $R$ is a local complete intersection.
Then $\pp$ is also continuous, and $\P \subseteq \s(\pp(\P))) = \supp^{-1}_{\D}(\supp \P)$ for any prime thick subcategory $\P$ of $\D$.
\end{enumerate}
\end{cor}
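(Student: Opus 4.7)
The plan is to settle part (1) directly from the definitions, then use Theorem \ref{prmsup} to run the containment/equality in part (2), saving continuity of $\pp$ for last.

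For (1): the identity $\pp\cdot\s=1$ is immediate from Lemma \ref{sprm2}. For order-reversibility of $\s$, if $\p\subseteq\q$ then $R\setminus\q\subseteq R\setminus\p$, so $M_\p$ is a further localization of $M_\q$ and $M_\q\cong 0$ forces $M_\p\cong 0$, giving $\s(\q)\subseteq\s(\p)$. For $\pp$, $\P\subseteq\Q$ implies $\II(\Q)\subseteq\II(\P)$ straight from Definition \ref{7}; then $\pp(\Q)\in\II(\P)$ together with maximality of $\pp(\P)$ forces $\pp(\Q)\subseteq\pp(\P)$. Continuity of $\s$ reduces to checking preimages of the closed basis $\{\spp M\}_{M\in\D}$, and the one-line unwinding $\s^{-1}(\spp M)=\{\p\in\spec R\mid M_\p\not\cong 0\}=\supp M$ lands in the Zariski-closed sets by Remark \ref{2}(2).

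For the containment/equality in (2), the key input under either hypothesis is Theorem \ref{prmsup}, which identifies $\supp\P=\{\q\in\spec R\mid\q\nsubseteq\pp(\P)\}$ for every prime thick subcategory $\P$. Any $M\in\P$ then satisfies $\supp M\subseteq\supp\P$, so $\pp(\P)\notin\supp M$, i.e.\ $M_{\pp(\P)}\cong 0$ and $M\in\s(\pp(\P))$; this yields $\P\subseteq\s(\pp(\P))$. The equality $\s(\pp(\P))=\supp_\D^{-1}(\supp\P)$ then follows by substituting the description from Theorem \ref{prmsup} into the identity $\s(\p)=\supp_\D^{-1}\{\q\mid\q\nsubseteq\p\}$ already recorded inside the proof of Lemma \ref{sprm2}.

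What remains is continuity of $\pp$. I plan to prove $\pp^{-1}(\V(I))=\spp\K(I)$ for every ideal $I$ of $R$; since $\spp\K(I)$ is closed by construction, this will suffice. Unwinding both sides reduces the claim to the biconditional $I\subseteq\pp(\P)\Leftrightarrow\K(I)\notin\P$. The forward direction I would handle by iterating the standard triangle $\K(K)\xrightarrow{\cdot a}\K(K)\to\K(K+(a))\to\K(K)[1]$ along a finite generating set of $\pp(\P)$ extending one of $I$: if $\K(I)\in\P$ then $\K(\pp(\P))\in\P$, contradicting $\pp(\P)\in\II(\P)$. For the reverse direction I need the auxiliary fact that every element of $\II(\P)$ lies below $\pp(\P)$; this is where a short Zorn/noetherian argument enters, since ascending chains in $\II(\P)$ stabilize by noetherianity of $R$, so any $I\in\II(\P)$ is majorized by some maximal element of $\II(\P)$, which by primality must be $\pp(\P)$. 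That Zorn step is the one mildly non-routine point; the rest is assembly of the pieces already in hand.
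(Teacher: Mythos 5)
Your proof is correct, and for the continuity of $\pp$ you follow a genuinely different route than the paper. The paper establishes the inclusion $\pp^{-1}(\V(I))\subseteq\spp\K(I)$ by first passing to $\supp\P$: from $\K(I)\in\P$ it deduces $\V(I)\subseteq\supp\P$, and then invokes Theorem \ref{prmsup} to identify $\supp\P$ with $\{\q\mid\q\nsubseteq\pp(\P)\}$, concluding $I\nsubseteq\pp(\P)$. You instead work entirely inside the lattice of Koszul complexes: writing $\pp(\P)=I+(a_1,\dots,a_k)$ and iterating the triangle $\K(J)\xrightarrow{\,a\,}\K(J)\to\K(J+(a))\to\K(J)[1]$ shows $\K(\pp(\P))\in\thick\K(I)$, so $\K(I)\in\P$ would force $\K(\pp(\P))\in\P$, a contradiction with $\pp(\P)\in\II(\P)$. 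This is more elementary and, as a bonus, makes visible that the hypotheses of part (2) are used only for the containment $\P\subseteq\s(\pp(\P))$ and the equality $\s(\pp(\P))=\supp^{-1}_\D(\supp\P)$ (both of which genuinely need Theorem \ref{prmsup}), not for the continuity of $\pp$ itself. You are also more careful than the paper at the one place it merely asserts that $\pp$ is ``directly verified'' to be order-reversing: your observation that noetherianity of $R$ ensures every element of $\II(\P)$ sits below some (hence the unique) maximal element is exactly the missing step, and the same step underlies the other inclusion $\spp\K(I)\subseteq\pp^{-1}(\V(I))$ in both proofs. The rest of your argument (part (1), and the containment/equality in (2) via Theorem \ref{prmsup} and the identity $\s(\p)=\supp_\D^{-1}\{\q\mid\q\nsubseteq\p\}$) matches the paper's approach.
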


\begin{proof}
(1) It is directly verified that $\s,\pp$ are order-reversing.
Lemma \ref{sprm2} shows $\pp\cdot\s=1$.
The equality
\begin{equation}\label{3}
\s^{-1}(\spp M) =\supp M
\end{equation}
holds for each $M\in\D$, which shows that $\s$ is a continuous map.

(2) Fix an ideal $I$ of $R$ and a prime thick subcategory $\P$ of $\D$.
If $\K(I)$ is not in $\P$, then $I$ belongs to $\II(\P)$ and is contained in $\pp(\P)$.
Conversely, if $\K(I)$ is in $\P$, then $\V(I)$ is contained in $\supp\P$.
The latter set consists of the prime ideals that are not contained in $\pp(\P)$ by Theorem \ref{prmsup}.
In particular, $\pp(\P)$ does not contain $I$.
This shows that $\pp^{-1}(\V(I)) = \spp \K(I)$, whence $\pp$ is continuous.
The equality $\s(\pp(\P))) = \supp^{-1}_{\D}(\supp \P)$ follows by Theorem \ref{prmsup} and \eqref{1}.
\end{proof}

We may ask when the maps $\pp$ and $\s$ are mutually inverse bijections.
Here is an answer to this question.

\begin{thm}\label{bij}
\begin{enumerate}[\rm(1)]
\item
One has mutually inverse homeomorphisms
$$
\xymatrix{
\spc\dpf(R)\ar@<0.5ex>[r]^-\pp & \spec R\ar@<0.5ex>[l]^-\s.
}
$$
\item
Let $R$ be a local complete intersection.
Then the following mutually inverse bijections are induced.
$$
\xymatrix{
\max(\spc \D) \ar@<0.5ex>[r]^-\pp & \min(\spec R)\ar@<0.5ex>[l]^-\s.
}
$$
\item
Let $R$ be a local complete intersection of positive Krull dimension.
Then the following are equivalent.
\begin{enumerate}[\rm(i)]
\item 
The maps $\pp: \spc \D \rightleftarrows \spec R :\s$ are mutually inverse bijections.
\item 
The maps $\pp: \spc \D \rightleftarrows \spec R :\s$ are mutually inverse homeomorphisms.
\item
One has $\D = \dpf(R)$.
\end{enumerate}
\end{enumerate}
\end{thm}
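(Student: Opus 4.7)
The plan is to dispatch the three parts in order, relying on Corollary \ref{maps2} (which furnishes $\pp\cdot\s=1$, the order-reversing property of both maps, continuity of both maps under the standing hypotheses, and the inclusion $\P\subseteq\s(\pp(\P))=\supp^{-1}_{\D}(\supp\P)$) together with Theorem \ref{prmsup} (the characterization of prime thick subcategories via their homological supports).

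For (1), the remaining task is to establish $\s\cdot\pp=1$ on $\spc\dpf(R)$. For $\P\in\spc\dpf(R)$, Corollary \ref{maps2}(2) yields $\P\subseteq\s(\pp(\P))=\supp^{-1}_{\dpf(R)}(\supp\P)$, and taking homological supports shows both sides share the same one. Invoking the Hopkins--Neeman classification, which recovers a thick subcategory of $\dpf(R)$ from its support, gives $\P=\s(\pp(\P))$, and continuity of both maps promotes the bijection to a homeomorphism. For (2), I would first show that $\s$ carries $\min(\spec R)$ into $\max(\spc\D)$: if $\p$ is minimal and $\s(\p)\subseteq\P$, applying the order-reversing $\pp$ yields $\pp(\P)\subseteq\p$, hence $\pp(\P)=\p$ by minimality, and then $\P\subseteq\s(\pp(\P))=\s(\p)$ forces $\P=\s(\p)$. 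Symmetrically, if $\P\in\max(\spc\D)$ and $\q\subseteq\pp(\P)$, then $\P\subseteq\s(\pp(\P))\subseteq\s(\q)$, and since $\s(\q)$ is a proper prime thick subcategory (as $R\notin\s(\q)$), maximality of $\P$ gives $\P=\s(\q)$, i.e.\ $\pp(\P)=\q$. Combined with $\pp\cdot\s=1$, this produces the required mutually inverse bijections.

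For (3), (iii)$\Rightarrow$(ii) follows from (1) and (ii)$\Rightarrow$(i) is automatic, so the content is (i)$\Rightarrow$(iii), which I argue by contrapositive. Suppose $\D\supsetneq\dpf(R)$ and pick $M\in\D\setminus\dpf(R)$. Writing $\m=(x_1,\dots,x_n)$, set $N:=M\otimes_R^L\K(\m)$; since $N$ is built from $M$ by iterated cones of the multiplication maps $x_i\colon M\to M$, we have $N\in\D$. Because $M\ne 0$ over the local ring $R$, the homological support satisfies $\supp N=\supp M\cap \V(\m)=\{\m\}$. A K\"unneth-type calculation,
$$
\operatorname{Tor}^R_i(N,k)=\bigoplus_{p+q=i}\operatorname{Tor}^R_p(M,k)\otimes_k\Lambda^q k^n,
$$
shows $N\in\dpf(R)$ if and only if $M\in\dpf(R)$, so $N\notin\dpf(R)$. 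Using $\dim R>0$, fix $\p\in\spec R$ with $\p\ne\m$; then $N_\p=0$ gives $N\in\s(\p)$, whereas $\X:=\thick_{\D}(\{\K(\q):\q\nsubseteq\p\})\subseteq\dpf(R)$ (being generated by perfect complexes) cannot contain $N$. Both $\s(\p)$ and $\X$ have homological support $\{\q:\q\nsubseteq\p\}$, so Theorem \ref{prmsup}(b) makes each a prime thick subcategory with $\pp$-value $\p$; their distinctness violates the injectivity of $\pp$ demanded by (i).

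The hard part is (i)$\Rightarrow$(iii), where I must manufacture a second prime thick subcategory lying over a chosen $\p\ne\m$. The trick is to \emph{contract} an arbitrary non-perfect complex to the closed point by Koszul-tensoring with $\K(\m)$: the K\"unneth computation is what guarantees the contracted object $N$ remains non-perfect, while the positive-dimensionality hypothesis is exactly what furnishes the prime $\p\ne\m$ needed to separate $\s(\p)$ from the Koszul-generated $\X$.
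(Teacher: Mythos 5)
Parts (1) and (2) follow the paper's argument essentially verbatim, reducing via Corollary \ref{maps2} to surjectivity of $\s$ and then invoking Hopkins--Neeman. Part (3), however, takes a genuinely different route for (i)$\Rightarrow$(iii). The paper argues directly: assuming (i), it sets $\P=\thick\{\K(\q)\mid\q\nsubseteq\p\}$, uses $\P=\s(\pp(\P))=\s(\p)$ to deduce $\supp_\D^{-1}\{\m\}\subseteq\dpf(R)$, and then shows every object of $\D$ is perfect via maximal Cohen--Macaulay approximation and cutting down by a maximal regular sequence. You instead argue the contrapositive: granting $\D\supsetneq\dpf(R)$, you produce a non-perfect object, concentrate its support at $\m$ by $\otimes^L_R\K(\m)$ (the K\"unneth computation $\operatorname{Tor}^R_i(N,k)\cong\bigoplus_{p+q=i}\operatorname{Tor}^R_p(M,k)\otimes_k\Lambda^q k^n$ is correct since both $\K(\m)$ and a free resolution of $M$ consist of free modules, and it does guarantee $N\notin\dpf(R)$), and then note that $N$ lies in $\s(\p)$ but not in the perfect-generated $\X=\thick\{\K(\q)\mid\q\nsubseteq\p\}$, so that Theorem \ref{prmsup} yields two distinct primes over $\p$, killing injectivity of $\pp$. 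Both arguments use the same pair $\s(\p)$, $\thick\{\K(\q)\mid\q\nsubseteq\p\}$ with $\p\subsetneq\m$, but the paper's route is structural (using the mutual-inverse hypothesis to force $\s(\p)\subseteq\dpf(R)$ and then the MCM trick), while yours exhibits an explicit failure of injectivity. Your version is arguably cleaner in that it avoids the MCM approximation step, at the cost of the K\"unneth bookkeeping; both are valid.
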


\begin{proof}
(1) We use Corollary \ref{maps2}.
It suffices to show that $\s$ is surjective.
Let $\P\in\spc\dpf(R)$.
Then we have $\s(\pp(\P))=\supp^{-1}_{\dpf(R)}(\supp\P)=\P$, whose last equality follows from \cite[Theorem 1.5]{Nee}.

(2) Again, we use Corollary \ref{maps2}.
We have only to show that $\s : \min(\spec R) \to \max  (\spc\D)$ is a well-defined surjection.
First, let $\p\in\min(\spec R)$.
Take a prime thick subcategory $\P$ of $\D$ containing $\s(\p)$.
We have $\pp(\P) \subseteq \pp(\s(\p)) = \p$, and $\p = \pp(\P)$ by the minimality of $\p$.
Therefore $\P \subseteq \s(\pp(\P))=\s(\p)$, and we get $\P=\s(\p)$.
Thus the map $\s : \min(\spec R) \to \max  (\spc\D)$ is well-defined.
Next, take $\P\in\max(\spc\D)$.
The inclusion $\P \subseteq \s(\pp(\P))$ and the maximality of $\P$ show that $\P = \s(\pp(\P))$.
Choose a minimal prime $\p$ of $R$ contained in $\pp(\P)$.
Then $\P =\s(\pp(\P)) \subseteq \s(\p)$.
Again the maximality of $\P$ implies $\P = \s(\p)$.
Thus $\s : \min(\spec R) \to \max  (\spc\D)$ is surjective.

(3) It follows from the assertion (1) that (iii) implies (ii), while it is trivial that (ii) implies (i).
Let us show that (i) implies (iii).
Denote by $\m$ the maximal ideal of $R$.
Since $\dim R > 0$, there is a prime ideal $\p$ of $R$ strictly contained in $\m$.
Set $\P = \thick\{\K(\q) \mid\q\in\spec R,\,\q \nsubseteq \p\}$.
Then we can verify that the equality $\supp \P = \{\q \in \spec R \mid \q \not\subseteq \p\}$ holds.
Theorem \ref{prmsup} implies that $\P$ is a prime thick subcategory with $\pp(\P)=\p$, whence $\P =\s(\pp(\P))= \s(\p)$.
We have $\supp_{\D}^{-1}\{\m\} \subseteq\s(\p) = \P \subseteq \dpf(R)$.

Take any $X\in\D$.
Taking a truncation of a projective resolution of $X$ gives rise to an exact triangle $P \to X \to M[n] \rightsquigarrow$ in $\db(R)$, where $P$ is a perfect complex, $M$ is a maximal Cohen--Macaulay $R$-module and $n$ is an integer.
Let $\xx$ be a maximal regular sequence on $R$.
Then $\xx$ is also a regular sequence on $M$, and $M/\xx M\in\supp_{\D}^{-1}\{\m\} \subseteq \dpf(R)$.
This shows that $M/\xx M$ has finite projective dimension as an $R$-module.
It follows from \cite[Exercise 1.3.6]{BH} that $M\in\dpf(R)$, and we conclude that $X \in \dpf(R)$.
\end{proof}

Applying Theorem \ref{bij}(3) to $\D = \db(R)$, we immediately obtain the following result.

\begin{cor}\label{pppp}
Let $R$ be a local complete intersection with $\dim R>0$.
Then the following are equivalent.
\begin{enumerate}[\rm(1)]
\item 
The maps $\pp: \spc \db(R) \rightleftarrows \spec R :\s$ are mutually inverse bijections.
\item 
The maps $\pp: \spc \db(R) \rightleftarrows \spec R :\s$ are mutually inverse homeomorphisms.
\item
The ring $R$ is regular.
\end{enumerate}
\end{cor}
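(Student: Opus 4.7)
The plan is to obtain this statement as an immediate specialization of Theorem \ref{bij}(3) to the case $\D = \db(R)$. Under that choice, the standing hypothesis $\dpf(R) \subseteq \D \subseteq \db(R)$ holds trivially, while the local complete intersection assumption with $\dim R > 0$ matches the hypothesis of the theorem. Moreover, statements (1) and (2) of the corollary are literally conditions (i) and (ii) of Theorem \ref{bij}(3). Consequently, the only task is to identify condition (iii) of the theorem, namely the equality $\db(R) = \dpf(R)$, with the regularity of $R$ in condition (3) of the corollary.

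For this identification I would appeal to the classical Auslander--Buchsbaum--Serre theorem: a Noetherian local ring is regular if and only if every finitely generated module has finite projective dimension. The implication ``$R$ regular $\Rightarrow \db(R) = \dpf(R)$'' follows because each finitely generated module then admits a bounded resolution by finitely generated projectives, and an arbitrary object of $\db(R)$ can be built from its finitely many nonzero homology modules by iterated extensions via the soft truncation triangles, hence lies in $\dpf(R)$ once each homology module does. The converse is immediate: if $\db(R) = \dpf(R)$, then every finitely generated $R$-module, regarded as a stalk complex, is perfect and thus has finite projective dimension, forcing $R$ to be regular.

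Since Theorem \ref{bij}(3) carries all of the substantive content, there is no genuine obstacle here; the only ingredient beyond the theorem is the standard homological characterization of regular local rings recalled above. I would organize the write-up as a one-line invocation of Theorem \ref{bij}(3) followed by the equivalence $\db(R) = \dpf(R) \Longleftrightarrow R\text{ regular}$, which completes the chain of equivalences (1) $\Leftrightarrow$ (2) $\Leftrightarrow$ (3).
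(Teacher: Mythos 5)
Your proposal is correct and follows exactly the paper's route: the paper derives Corollary \ref{pppp} by applying Theorem \ref{bij}(3) with $\D = \db(R)$, just as you do. The only difference is that you spell out the standard equivalence $\db(R)=\dpf(R)\Leftrightarrow R$ regular (Auslander--Buchsbaum--Serre), which the paper leaves implicit.
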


\begin{rem}\label{art}
If $R$ is an artinian complete intersection local ring, then the maps $\pp: \spc \db(R) \rightleftarrows \spec R :\s$ are always mutually inverse homeomorphism.
Indeed, write $\spec R=\{\m\}$.
Let $\P$ be a prime thick subcategory of $\db(R)$.
Then Theorem \ref{prmsup} implies that $\supp \P = \{\p \in \spec R \mid \p \not\subseteq \m\} = \emptyset$.
This implies $\P = \zero$, and thus $\spc\db(R)=\{0\}$.
It remains to note that $\s(\m)=\zero$ and $\pp(\zero)=\m$.
\end{rem}

From now on, we discuss the relationship of radical thick subcategories of $\D$ with tame thick subcategories, which are defined below. 

\begin{dfn}
(1) A thick subcategory $\X$ of $\D$ is called {\it tame} if there is a subset $W$ of $\spec R$ such that $\X = \supp_{\D}^{-1}W$.
In this case, $W$ can be taken as a specialization-closed subset; in fact, it holds that $\supp_\D^{-1}W=\supp_\D^{-1}(\supp(\supp_\D^{-1}W))$.
Denote by $\Tame\D$ the set of tame thick subcategories of $\D$.\\
(2) We put $\X^\tame:=\supp_\D^{-1}(\supp \X)$ for each full subcategory $\X$ of $\D$.
This is the smallest tame thick subcategory of $\D$ containing $\X$.
In this sense, we call it the {\it tame closure} of $\X$. 
\end{dfn} 

For a topological space $X$, we denote by $\spcl X$ the set of specialization-closed subsets of $X$, that is, the unions of closed subsets of $X$.
The theorem below complements the bijection given in Theorem \ref{class}.

\begin{thm}\label{cmpr}
There is a diagram of maps of sets
$$
\xymatrix{
\rad \D \ar@<0.5ex>[r]^-{\spp} \ar@<0.5ex>[d]^{{()}^\tame}   &
\st\D\ar@<0.5ex>[l]^-{\spp^{-1}} \ar@<0.5ex>[d]^{\s^{-1}} \\
\Tame\D \ar@<0.5ex>[u]^{\inc}  \ar@<0.5ex>[r]^-{\supp} &  \spcl(\spec R), \ar@<0.5ex>[l]^-{\supp_{\D}^{-1}} \ar@<0.5ex>[u]^{\pp^{-1}}
}
$$
where $\inc$ stands for the inclusion map.
The horizontal maps are mutually inverse bijections, and the compositions of the maps ending at bottom sets are commutative (in particular, the composition of the two vertical maps starting from each bottom set is the identity).
Moreover, $\rad \D = \Tame \D$ if and only if the maps $\pp: \spc\D\rightleftarrows \spec R :\s$ are mutually inverse bijections.

\end{thm}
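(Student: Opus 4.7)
The plan is to dispatch the structural parts of the diagram first and then concentrate on the biconditional at the end, which is where the real content lies.

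For the horizontal bijections, the top row is Theorem \ref{class}, and the bottom row reduces to $\supp_\D^{-1}(\supp\X)=\X$ for $\X\in\Tame\D$ (built into the definition of tameness) and $\supp(\supp_\D^{-1}W)=W$ for $W\in\spcl(\spec R)$, whose only nontrivial inclusion follows from $\K(\p)\in\supp_\D^{-1}W$ for every $\p\in W$. To see that $\inc$ is well-defined I would verify $\Tame\D\subseteq\rad\D$: if $M\notin\supp_\D^{-1}W$ and $\p\in\supp M\setminus W$, then $\s(\p)$ is prime, contains $\supp_\D^{-1}W$, and excludes $M$, so $M\notin\sqrt{\supp_\D^{-1}W}$.

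For the right-hand column, $\s^{-1}(\spp\X)=\supp\X$ is a direct unwinding via \eqref{3}, which also shows $\s^{-1}$ lands in $\spcl(\spec R)$. For $\pp^{-1}$, the containment $\K(J)\in\thick\K(I)$ whenever $I\subseteq J$ makes $\II(\P)$ downward closed in ideals, and Noetherianity lets every $I\in\II(\P)$ extend to a maximal element, yielding $\p\in\II(\P)\iff\p\subseteq\pp(\P)$; this gives $\pp^{-1}(\V(\p))=\spp\K(\p)$, so for a general specialization-closed $W=\bigcup_{\p\in W}\V(\p)$ one has $\pp^{-1}(W)=\spp\bigl(\thick\{\K(\p)\mid\p\in W\}\bigr)\in\st\D$. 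The commutativities then reduce to the identities $\supp\X^\tame=\supp\X$ (from the bottom bijection) and $\supp\sqrt{\X}=\supp\X$ (obtained by applying $\s^{-1}$ to Proposition \ref{radsp}(2)), while the vertical identities $()^\tame\circ\inc=\id_{\Tame\D}$ and $\s^{-1}\circ\pp^{-1}=\id_{\spcl(\spec R)}$ follow respectively from the definition of tameness and from applying $\s^{-1}(\spp\K(\p))=\V(\p)$ term by term to $\pp^{-1}(W)=\bigcup_{\p\in W}\spp\K(\p)$.

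The substantive step is the biconditional. For the easy direction, assuming $\pp$ and $\s$ are mutually inverse bijections, every prime has the form $\s(\p)$, and $\X\subseteq\s(\p)$ is equivalent to $\p\notin\supp\X$; hence $\sqrt{\X}=\spp^{-1}(\spp\X)$ collapses to $\supp_\D^{-1}(\supp\X)$, so every radical subcategory is tame. For the converse, Corollary \ref{maps2}(1) already gives $\pp\cdot\s=\id$, reducing the problem to $\s(\pp(\P))=\P$ for every $\P\in\spc\D$. Every prime $\P$ is automatically radical, since $\sqrt{\P}\subseteq\P$ because $\P$ itself appears in the intersection defining $\sqrt{\P}$, and $\P\subseteq\sqrt{\P}$ because $\P\subseteq\Q$ for every $\Q$ in that intersection; thus $\P$ is tame by hypothesis and $\P=\supp_\D^{-1}(\supp\P)$. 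The main obstacle is then to prove $\supp\P=\{\q\mid\q\nsubseteq\pp(\P)\}$ without invoking the hypotheses of Theorem \ref{prmsup}. Tameness rewrites $\II(\P)$ as the geometric set $\{I\mid\V(I)\nsubseteq\supp\P\}$; setting $\p_0:=\pp(\P)$, a direct maximality chase forces $\p_0\notin\supp\P$ (if $\q\in\V(\p_0)\setminus\supp\P$ and $x\notin\p_0$, then $\p_0+(x)\notin\II(\P)$, hence $\V(\p_0+(x))\subseteq\supp\P$, forcing $x\notin\q$ and so $\q=\p_0$), and running the argument in reverse identifies $\supp\P$ with $\{\q\mid\q\nsubseteq\p_0\}=\supp\s(\p_0)$. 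The bottom-row bijection then forces $\P=\s(\p_0)$.
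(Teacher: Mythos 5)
Your proof is correct, and for the structural parts — the two horizontal bijections, the commutativity of the squares ending at the bottom row, and the vertical retractions — you follow essentially the same route as the paper: the top bijection is Theorem \ref{class}, the bottom one is the Hopkins--Neeman-style computation using $\supp\K(\p)=\V(\p)$, the map $\inc$ is well-defined via $\supp_\D^{-1}W=\bigcap_{\p\notin W}\s(\p)$, and well-definedness of $\pp^{-1}$ via $\pp^{-1}(\V(\p))=\spp\K(\p)$ (which needs the Noetherian fact, implicit in the paper, that every element of $\II(\P)$ sits below the unique maximal element).

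Where you genuinely diverge is the ``only if'' direction of the final biconditional, and your route is actually better. The paper disposes of this in one line by appealing to the inclusion $\P\subseteq\s(\pp(\P))$ from Corollary \ref{maps2}(2) — but that corollary carries the standing hypothesis that $\D=\dpf(R)$ or $R$ is a local complete intersection, which Theorem \ref{cmpr} as stated does not impose. You instead deduce $\II(\P)=\{I\mid\V(I)\nsubseteq\supp\P\}$ directly from tameness of $\P$ (which is hypothesis-free once $\rad\D=\Tame\D$), and then rerun the maximality argument of Theorem \ref{prmsup}(5) to conclude $\supp\P=\{\q\mid\q\nsubseteq\pp(\P)\}=\supp\s(\pp(\P))$, so $\P=\s(\pp(\P))$ by the bottom-row bijection. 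This yields the same conclusion without any finiteness or complete-intersection assumption on $R$, effectively generalizing — or, depending on how one reads the theorem's intended scope, repairing — the paper's argument. In short: same scaffolding, but a self-contained and more widely applicable proof of the substantive implication.
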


\begin{proof}
Let us prove the theorem step by step.

(1) We have already got the mutually inverse bijections $(\spp,\spp^{-1})$ in Theorem \ref{class}.

(2) Using Remark \ref{2}(1), we see that $W=\supp (\supp_{\D}^{-1} W)$ for a specialization-closed subset $W$ of $\spec R$.
This yields the mutually inverse bijections $(\supp,\supp_\D^{-1})$.

(3) It is easy to check that there is an equality $\supp_{\D}^{-1} W = \bigcap_{\p\in W^\complement} \s(\p)$ for any subset $W$ of $\spec R$.
This shows $\Tame\D\subseteq\rad\D$, and we have $()^\tame\cdot\inc=1$.

(4) It is seen from \eqref{3} that $\s^{-1}(\spp\X)=\supp\X$ for a full subcategory $\X$ of $\D$, which shows that $\s^{-1}$ is well-defined.
We claim that for a specialization-closed subset $W$ of $\spec R$ there are equalities
$$
\pp^{-1}(W)=\spp\{\K(\p)\mid\p\in W\}=\spp(\thick\{\K(\p)\mid\p\in W\}).
$$
Indeed, the second equality follows from Remark \ref{11}.
To show the first equality,  pick any $\P\in\pp^{-1}(W)$ and put $\p:=\pp(\P)\in W$.
Then $\p\in\II(\P)$, which implies $\K(\p)\notin\P$ and hence $\P\in\spp\K(\p)$.
Conversely, pick any $\p\in W$ and $\P\in\spp\K(\p)$.
Then $\K(\p)\notin\P$, which implies $\p\in\II(\P)$.
The unique maximality of $\pp(\P)$ shows that $\pp(\P)$ contains $\p$, and we get $\pp(\P)\in W$ since $W$ is specialization-closed.
Therefore $\P$ belongs to $\pp^{-1}(W)$.
Thus the claim follows, which shows that $\pp^{-1}$ is well-defined.
Since $\pp\cdot\s=1$ by Corollary \ref{maps2}(1), we see that $\s^{-1} \cdot \pp^{-1} = 1$.

(5) We have $\s^{-1}(\spp \X) = \supp \X = \supp(\X^\tame)$ for a full subcategory $\X$ of $\D$, which implies $\s^{-1} \cdot \spp = \supp \cdot ()^\tame$.
Using this equality and the fact that the horizontal maps are bijections which we have already seen in (1) and (2), we can easily deduce $()^\tame \cdot \spp^{-1} = \supp_{\D}^{-1} \cdot \s^{-1}$.

Finally, let us prove the last assertion of the theorem.
The `if' part is easily deduced from the commutative diagram.
To show the `only if' part, assume $\rad \D = \Tame \D$.
Then every prime thick subcategory of $\D$ is tame, and hence it is in the image of $\s$ by Corollary \ref{maps2}(2).
\end{proof}

Now we give a partial answer to Question \ref{5}.

\begin{cor}\label{6}
There are equalities and a one-to-one correspondence
$$
\xymatrix{
\th\dpf(R)=\Tame\dpf(R)=\rad\dpf(R)
 \ar@<0.5ex>[r]^-{\spp} &
\st\dpf(R)=\thom(\spc\dpf(R))\ar@<0.5ex>[l]^-{\spp^{-1}}.
}
$$
Furthermore, the maps in the diagram in Theorem \ref{cmpr} are all bijections for $\D=\dpf(R)$.
\end{cor}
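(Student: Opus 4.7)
The plan is to derive the corollary by combining Theorem \ref{bij}(1), the Hopkins--Neeman classification, and the diagram already established in Theorem \ref{cmpr}, so that all the required equalities and bijections fall out once $\pp$ and $\s$ are known to be mutually inverse homeomorphisms.

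First I would invoke Theorem \ref{bij}(1), which gives that $\pp$ and $\s$ are mutually inverse homeomorphisms between $\spc\dpf(R)$ and $\spec R$. By the last assertion of Theorem \ref{cmpr}, this immediately yields $\rad\dpf(R)=\Tame\dpf(R)$. To upgrade this to $\th\dpf(R)=\Tame\dpf(R)$, I would appeal to the Hopkins--Neeman classification \cite[Theorem 1.5]{Nee}: every thick subcategory $\X$ of $\dpf(R)$ satisfies $\X=\supp_\D^{-1}(\supp\X)$, so every thick subcategory is tame. The reverse inclusion is trivial.

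Next, I would prove $\st\dpf(R)=\thom(\spc\dpf(R))$. Since $R$ is noetherian, $\spec R$ is a noetherian topological space, and in a noetherian space every open subset is quasi-compact; hence $\thom(\spec R)=\spcl(\spec R)$. Because $\s$ is a homeomorphism, the induced bijection $\s^{-1}\colon 2^{\spc\dpf(R)}\to 2^{\spec R}$ restricts to a bijection $\thom(\spc\dpf(R))\to\thom(\spec R)=\spcl(\spec R)$. On the other hand, the diagram in Theorem \ref{cmpr} already provides a bijection $\s^{-1}\colon\st\dpf(R)\to\spcl(\spec R)$. Since both $\st\dpf(R)$ and $\thom(\spc\dpf(R))$ are subsets of $2^{\spc\dpf(R)}$ whose images under the globally injective map $\s^{-1}$ coincide with $\spcl(\spec R)$, the two subsets must agree.

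Finally, I would observe that all maps in the diagram of Theorem \ref{cmpr} are now bijections for $\D=\dpf(R)$: the horizontal maps $\spp,\spp^{-1},\supp,\supp_\D^{-1}$ are bijections in general (Theorem \ref{class} and parts (1)--(2) of the proof of Theorem \ref{cmpr}); the left vertical pair $(\inc,()^\tame)$ are mutually inverse bijections because $\rad\dpf(R)=\Tame\dpf(R)$; and the right vertical pair $(\pp^{-1},\s^{-1})$ are mutually inverse bijections because $\pp$ and $\s$ are mutually inverse bijections between $\spc\dpf(R)$ and $\spec R$. The only real subtlety, and the step I would take the most care on, is the identification $\st\dpf(R)=\thom(\spc\dpf(R))$ as equal subsets of $2^{\spc\dpf(R)}$; the trick is to push both into $2^{\spec R}$ through the globally injective $\s^{-1}$ and use the Noetherian coincidence $\thom=\spcl$ to see that they have a common image.
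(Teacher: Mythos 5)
Your proposal is correct and follows essentially the same route as the paper: Hopkins--Neeman for $\th=\Tame$, Theorem \ref{cmpr} for $\Tame=\rad$, Theorem \ref{bij}(1) together with the noetherian coincidence $\thom(\spec R)=\spcl(\spec R)$ for the identification of $\st\dpf(R)$ with the Thomason subsets, and then reading off the bijectivity of all maps in the diagram. The only cosmetic difference is that you deduce $\st\dpf(R)=\thom(\spc\dpf(R))$ by pushing both subsets of $2^{\spc\dpf(R)}$ through the globally injective $\s^{-1}$ and comparing images, whereas the paper checks the two inclusions directly by transporting via $\pp$ and $\s$; these are the same argument in different packaging.
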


\begin{proof}
The Hopkins--Neeman theorem \cite[Theorem 1.5]{Nee} says that all the thick subcategories of $\dpf(R)$ are tame.
It follows from Theorem \ref{cmpr} that every tame thick subcategory of $\dpf(R)$ is radical.
Hence we obtain the equalities $\th\dpf(R)=\Tame\dpf(R)=\rad\dpf(R)$.

Let us show the equality $\st\dpf(R)=\thom(\spc\dpf(R))$.
The homeomorphisms $\pp,\s$ induce mutually inverse bijections between the Thomason subsets of $\spc\dpf(R)$ and those of $\spec R$.
Note here that a subset of $\spec R$ is Thomason if and only if it is specialization-closed.
Therefore, for each $T\in\thom(\spc\dpf(R))$ we have $\pp(T)\in\thom(\spec R)=\spcl(\spec R)$, and $T=\pp^{-1}(\pp(T))$ belongs to $\st(\dpf(R))$ by Theorem \ref{cmpr}.
Conversely, for any $U\in\st(\dpf(R))$ we have $\s^{-1}(U)\in\spcl(\spec R)=\thom(\spec R)$ by Theorem \ref{cmpr} again, whence $U=\s(\s^{-1}(U))\in\thom(\spc\dpf(R))$.
We now conclude that $\st\dpf(R)=\thom(\spc\dpf(R))$.
Combining Theorem \ref{class}, we are done.
\end{proof}

\begin{rem}
Recall that the derived category $\dpf(R)$ has the structure of a tensor triangulated category, so that Balmer's theory can be applied to it.
The one-to-one correspondence in Corollary \ref{6} is identified with Theorem \ref{0Balcls} for $\T=\dpf(R)$.
Indeed, there are equalities
$$
\spc\dpf(R)=\bspc\dpf(R),\qquad
\spp=\bspp,
$$
where $\bspp$ stands for the Balmer support.

Let us show these equalities.
The latter one follows from the former one and the definitions of $\spp$ and $\bspp$.
The former equality follows from (i) and (ii) below; let $\P$ be a thick subcategory of $\dpf(R)$.

(i) Suppose that $\P$ belongs to $\spc\dpf(R)$.
Then $\P\ne\dpf(R)$.
As any thick subcategory of $\dpf(R)$ is a tensor ideal, so is $\P$.
Setting $\p:=\pp(\P)$, we have $\P=\s(\p)$ by Theorem \ref{bij}(1).
Let $X,Y\in\dpf(R)$ be objects such that $X\otimes_RY\in\P$.
Then $X_\p\otimes_{R_\p}Y_\p\cong0$ in $\dpf(R_\p)$, which implies either $X_\p\cong0$ or $Y_\p\cong0$.
Hence either $X\in\P$ or $Y\in\P$ holds, and we conclude that $\P\in\bspc\dpf(R)$.

(ii) Suppose that $\P\in\bspc\dpf(R)$, and let $I,J$ be maximal elements of $\II(\P)$.
Then neither $\K(I)$ nor $\K(J)$ is in $\P$.
Since $\P\in\bspc\dpf(R)$, we have $\K(I+J)=\K(I)\otimes_R\K(J)\notin\P$.
Hence $I+J\in\II(\P)$, and the maximality of $I,J$ shows $I=I+J=J$.
Thus $\II(\P)$ has a unique maximal element, and $\P\in\spc\dpf(R)$.
\end{rem}

We close this section by proving a theorem on the dimension of $\D$, which improves Theorem \ref{bij}(3).

\begin{thm}\label{dim}
\begin{enumerate}[\rm(1)]
\item
There is an inequality $\dim \D \ge \dim R$.
\item
Consider the following conditions:\quad \\
\quad
{\rm(a)} $\dim \D = \dim R$,\quad
{\rm(b)} $\dim \spc \D = \dim R$,\quad
{\rm(c)} $\spc \D\cong\spec R$,\quad
{\rm(d)} $\D = \dpf(R)$.\\
Then the implications {\rm(d)} $\Rightarrow$ {\rm(c)} $\Rightarrow$ {\rm(b)} $\Rightarrow$ {\rm(a)} hold.
If $R$ is a complete intersection local ring with positive Krull dimension, then the implication {\rm(a)} $\Rightarrow$ {\rm(d)} holds as well.
\end{enumerate}
\end{thm}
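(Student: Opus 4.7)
For part (1), given any chain $\p_0\subsetneq\p_1\subsetneq\cdots\subsetneq\p_n$ in $\spec R$, I apply $\s$ to obtain a strictly descending chain $\s(\p_0)\supsetneq\s(\p_1)\supsetneq\cdots\supsetneq\s(\p_n)$ in $\spc\D$; strictness is forced by $\pp\circ\s=1$ (Corollary \ref{maps2}(1)), yielding $\dim\D\ge\dim R$. For part (2), the three easy implications are all short: (d)$\Rightarrow$(c) is Theorem \ref{bij}(1); (c)$\Rightarrow$(b) is invariance of Krull dimension under homeomorphism; and (b)$\Rightarrow$(a) follows by combining (1) with the inequality $\dim\D\le\dim\spc\D$ of Remark \ref{r}.

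The bulk of the work is (a)$\Rightarrow$(d) under the hypothesis that $R$ is a complete intersection local ring with $d:=\dim R>0$. If $R$ is regular then $\dpf(R)=\db(R)$, so (d) is automatic; hence I may assume $R$ is singular and argue by contradiction, supposing $\D\supsetneq\dpf(R)$. Mimicking the argument in the proof of Theorem \ref{bij}(3), a truncation of a projective resolution produces a maximal Cohen--Macaulay module $M\in\D\setminus\dpf(R)$. Let $\xx=x_1,\ldots,x_d$ be a system of parameters of $R$; since $R$ is Cohen--Macaulay and $M$ is maximal Cohen--Macaulay, $\xx$ is both $R$- and $M$-regular. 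Then $M\otimes_R\K(\xx)\in\thick M\subseteq\D$ is quasi-isomorphic to $M/\xx M$, which has support $\{\m\}$ (being of Krull dimension zero) and is non-perfect by \cite[Exercise 1.3.6]{BH}.

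Next I build the contradicting chain. Since $R$ is Cohen--Macaulay (hence catenary and equidimensional), I pick a saturated prime chain $\p_0\subsetneq\p_1\subsetneq\cdots\subsetneq\p_d=\m$ in $\spec R$ with $\p_0$ a minimal prime. Set $\P_i:=\thick\{\K(\q)\mid\q\nsubseteq\p_i\}$. A direct computation gives $\supp\P_i=\{\q\mid\q\nsubseteq\p_i\}$, so by Theorem \ref{prmsup} each $\P_i$ is prime with $\pp(\P_i)=\p_i$; in particular $\P_d=\zero$. The inclusions $\zero=\P_d\subseteq\P_{d-1}\subseteq\cdots\subseteq\P_0$ are strict because the $\p_i$ are distinct, and Corollary \ref{maps2}(2) gives $\P_0\subseteq\s(\p_0)$. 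The critical step---and the main obstacle---is the strict inclusion $\P_0\subsetneq\s(\p_0)$: this is witnessed by $M/\xx M$, which belongs to $\s(\p_0)$ (since $\p_0\ne\m$ and $\supp M/\xx M=\{\m\}$) but not to $\P_0$ (since $\P_0\subseteq\dpf(R)$ while $M/\xx M$ is non-perfect). The resulting chain of length $d+1$ in $\spc\D$ forces $\dim\D>d$, contradicting (a).
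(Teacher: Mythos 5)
Your proof is correct, and for the hard implication (a)$\Rightarrow$(d) it takes a genuinely different route from the paper. The paper works forward from (a): applying $\s$ to a prime chain of length $d$ ending at $\m$, it uses $\dim\D=d$ to force $\height\s(\p_{d-1})=1$, deduces that $\pp^{-1}(\p_{d-1})$ is the singleton $\{\s(\p_{d-1})\}$, and then re-runs the truncation argument from Theorem~\ref{bij}(3) with $\p_{d-1}$ in place of $\p$ to conclude $\D=\dpf(R)$. You instead argue by contraposition: from $\D\supsetneq\dpf(R)$ you extract a non-perfect finite-length complex $M/\xx M\in\D$, then build an explicit chain
$$\zero=\P_d\subsetneq\P_{d-1}\subsetneq\cdots\subsetneq\P_0\subsetneq\s(\p_0)$$
where each $\P_i=\thick\{\K(\q)\mid\q\nsubseteq\p_i\}$ is prime by Theorem~\ref{prmsup}, sits inside $\dpf(R)$, and the last strict step is witnessed precisely by $M/\xx M$, forcing $\dim\D\ge d+1$. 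Both proofs use the same two ingredients (the support characterization of primeness for Koszul-generated thick subcategories, and the truncation-plus-regular-sequence trick), but yours packages them more cleanly: it avoids the intermediate reasoning about heights and fibers of $\pp$ and instead directly exhibits a too-long chain. Two minor remarks: the appeal to catenarity/equidimensionality is unnecessary (a prime chain of length $d$ ending at $\m$ exists simply because $\dim R=d$, and its bottom element is automatically minimal), and the terse phrase ``strict because the $\p_i$ are distinct'' is really using $\pp(\P_i)=\p_i$ together with functionality of $\pp$; both points are harmless.
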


\begin{proof}
(1) The assertion follows from Corollary \ref{maps2}(1).

(2) Theorem \ref{bij}(1) shows the implication (d) $\Rightarrow$ (c), while (b) $\Rightarrow$ (a) is shown by Remark \ref{r} and (1).

To show (c) $\Rightarrow$ (b), let $f: \spc \D \to\spec R$ be a homeomorphism.
Then $\spc \D$ is a sober space since so is $\spec R$, and we obtain $\dim \spc \D = \dim \D$ by Remark \ref{r}.
Note that $f(\overline{\{\P\}}) = \overline{\{f(\P)\}}$ for each $\P\in\spc\D$.
Using Proposition \ref{t0}, we see that $\P\subseteq\Q$ in $\spc\D$ if and only if $f(\P)\supseteq f(\Q)$.
It follows that $\dim \D = \dim R$, and consequently, $\dim\spc\D=\dim R$.

Finally, we show the implication (a) $\Rightarrow$ (d), assuming that $(R, \m)$ is a local complete intersection with $d:= \dim R>0$.
Take a chain $\p_0 \subsetneq \cdots  \subsetneq \p_d = \m$ of prime ideals of $R$.
Applying $\s$ gives rise to a chain
$$
\zero=\s(\m)=\s(\p_d)\subsetneq \s(\p_{d-1}) \subsetneq \cdots \subsetneq \s(\p_0)
$$
of prime thick subcategories of $\D$.
Since $\D$ has dimension $d$, we have $\height\s(\p_{d-1})=1$.
It follows from Corollary \ref{maps2}(2) and the fact $\pp(\zero)=\m$ that $\pp^{-1}(\p_{d-1})=\{\s(\p_{d-1})\}$.
Replacing $\p$ with $\p_{d-1}$ in the proof of Theorem \ref{bij}(3), we obtain $\D = \dpf(R)$.
\end{proof}

%%%%%%%%%%%%%%%%%%%%%%%%%%%%%%%%%%%%%%%%%%%%%%%%%%%%%%%%%%%
\section{Spectra of singularity categories}

In this section, we investigate the spectra of singularity categories of commutative noetherian rings.
The flow will go similar to the previous section, while the results which will be obtained in this section are completely independent.

Throughout this section, we assume that $R$ is a commutative noetherian ring.
Recall that the {\em singularity category} $\ds(R)$ of $R$ is defined as the Verdier quotient
$$
\ds(R) := \db(R) / \dpf(R).
$$
By definition $\ds(R)$ is a triangulated category, and we can define its spectrum $\spc(\ds(R))$.
The purpose of this section is to explore the structure of the topological space $\spc(\ds(R))$.

We denote by $\sing R$ the {\em singular locus} of $R$, that is, the set of prime ideals $\p$ of $R$ such that the local ring $R_\p$ is singular (i.e. nonregular).
We equip $\sing R$ with the relative topology to regard it as a subspace of $\spec R$.
First of all, let us recall the definition of a {\em singular support}, which plays an important role in this section.

\begin{dfn}
For an object $M \in \ds(R)$, we define the {\em singular support} of $M$ by
$$
\ssupp M:= \{\p \in \sing R \mid M_\p \not\cong 0 \mbox{ in } \ds(R_\p) \}=\{\p\in\sing R\mid\pd_{R_\p}M_\p<\infty\}.
$$
For a full subcategory $\X \subseteq \ds(R)$, we define the {\em singular support} of $\X$ by
$$
\ssupp\X := \bigcup_{M \in \X} \ssupp M.
$$
\end{dfn}

Here we state some basic properties of singular supports, which correspond to Remark \ref{2}.

\begin{rem}\label{9}
\begin{enumerate}[\rm(1)]
\item
For $\p\in\sing R$ it holds that $\ssupp(R/\p)=\V(\p)$.
\item
For $M\in\ds(R)$ the set $\ssupp M$ is Zariski-closed.
\item 
For $M\in\ds(R)$ one has $\ssupp M = \emptyset$ if and only if $M \cong 0$.
\item
For $M\in\ds(R)$ one has $\ssupp(M[1])=\ssupp M$.
\item 
For $M,N\in\ds(R)$ the equality $\ssupp(M \oplus N) = \ssupp M \cup \ssupp N$ holds.
\item 
For an exact triangle $L \to M \to N \to L[1]$ in $\ds(R)$ it holds that $\ssupp M \subseteq \ssupp L \cup \ssupp N$.
\end{enumerate}
\noindent
In particular, for each subset $W$ of $\sing R$ the full subcategory
$$
\ssupp^{-1}W := \{M \in \ds(R) \mid \ssupp M \subseteq W \}.
$$
of $\ds(R)$ is a thick subcategory.
\end{rem}

We adopt the following definition of prime thick subcategories of $\ds(R)$.
We should compare this with Definition \ref{7}.
The way to define the set $\II(\X)$ is quite different, but the definition of a prime thick subcategory is similarly done.

\begin{dfn}
For each $\X \in \th(\ds(R))$ we define the set $\II(\X)$ of ideals of $R$ by
$$
\II(\X):=\{I \subseteq R \mid \V(I) \subseteq \sing R,\, R/I \not\in \X\}.
$$
We say that a thick subcategory $\P$ is {\it prime} if $\II(\P)$ has a unique maximal element.
Denote by $\pp(\P)$ the maximal element of $\II(\P)$ and by $\spc \ds(R)$ the set of prime thick subcategories of $\ds(R)$.
We equip $\spc \ds(R)$ with the topology defined in Definition \ref{top}.
\end{dfn}

Similarly as in Section 3, for each $\p \in \sing R$ we define the full subcategory $\s(\p)$ of $\ds(R)$ by
$$
\s(\p):= \{M \in \ds(R) \mid M_\p \cong 0 \mbox{ in } \ds(R_\p) \}.
$$
Analogous statements to Lemmas \ref{sprm2} and \ref{prm} hold true, although the proof is rather different.

\begin{lem}\label{sprm}
\begin{enumerate}[\rm(1)]
\item
Let $\p\in\sing R$.
Then $\s(\p)$ is a prime thick subcategory of $\ds(R)$ with $\pp(\s(\p))=\p$.
\item
For a thick subcategory $\X$ every maximal element of $\II(\X)$ belongs to $\sing R$.
In particular, $\pp(\P)$ belongs to $\sing R$ for any prime thick subcategory $\P$ of $\ds(R)$.
\end{enumerate}
\noindent
Thus one obtains a pair of maps
$$
\pp: \spc \ds(R) \rightleftarrows \sing R :\s.
$$
\end{lem}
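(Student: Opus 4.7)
The plan for (1) is to verify thickness of $\s(\p)$ and then to compute $\II(\s(\p))$ explicitly. Thickness follows because localization at $\p$ is an exact triangulated functor $(-)_\p\colon\ds(R)\to\ds(R_\p)$ commuting with shifts, finite direct sums, and summands, so the preimage of the zero subcategory is thick. To compute $\II(\s(\p))$, I would first observe that if $I\not\subseteq\p$ then $(R/I)_\p=0$, so $R/I\in\s(\p)$; hence every $I\in\II(\s(\p))$ is contained in $\p$, which makes $\p$ an automatic upper bound. The key step is to check that $\p$ itself lies in $\II(\s(\p))$: since $R_\p$ is singular, the Auslander--Buchsbaum--Serre theorem gives $\pd_{R_\p}\kappa(\p)=\infty$, so $(R/\p)_\p=\kappa(\p)\not\cong0$ in $\ds(R_\p)$; and since regularity localizes, every prime $\q\supseteq\p$ must also be singular, i.e.\ $\V(\p)\subseteq\sing R$. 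Thus $\p\in\II(\s(\p))$ and it is the unique maximal element, proving $\pp(\s(\p))=\p$.

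For (2), the natural tool is a prime filtration of $R/I$. Let $I$ be a maximal element of $\II(\X)$, so $R/I$ is a nonzero finitely generated $R$-module with $R/I\notin\X$ and $\V(I)\subseteq\sing R$. Take a prime filtration $0=M_0\subsetneq M_1\subsetneq\cdots\subsetneq M_n=R/I$ with $M_i/M_{i-1}\cong R/\p_i$; each such $\p_i$ lies in $\Supp(R/I)=\V(I)$, in particular $\p_i\supseteq I$. Each short exact sequence $0\to M_{i-1}\to M_i\to R/\p_i\to 0$ descends from $\db(R)$ to an exact triangle in $\ds(R)$, so if every $R/\p_i$ lay in $\X$ then a short induction using thickness would force $R/I=M_n\in\X$, a contradiction. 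Hence some $R/\p_i\notin\X$; since $\V(\p_i)\subseteq\V(I)\subseteq\sing R$, this $\p_i$ is itself an element of $\II(\X)$, and the maximality of $I$ forces $I=\p_i$. In particular $I$ is prime, lies in $\V(I)\subseteq\sing R$, and is therefore an element of $\sing R$.

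I expect the only genuinely non-routine step to be the upward propagation of singularity used in both parts: $\p\in\sing R$ implies $\V(\p)\subseteq\sing R$. This is what certifies $\p$ as an element of $\II(\s(\p))$ in (1) and what keeps the primes $\p_i$ produced by the filtration inside the class of ideals defining $\II(\X)$ in (2). It follows from the fact that regularity localizes, but overlooking it would derail both arguments. Everything else is bookkeeping: exactness of localization at $\p$, the Auslander--Buchsbaum--Serre characterization of finite projective dimension of the residue field, and the standard existence of prime filtrations for finitely generated modules.
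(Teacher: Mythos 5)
Your argument is correct. Part (1) is essentially the paper's proof: both show thickness of $\s(\p)$ (you via exactness of localization $(-)_\p\colon\ds(R)\to\ds(R_\p)$, the paper by exhibiting $\s(\p)=\ssupp^{-1}\{\q\in\sing R\mid\q\nsubseteq\p\}$) and both compute $\II(\s(\p))$ by showing every member is contained in $\p$ while $\p$ itself belongs; you are, if anything, more scrupulous than the paper in explicitly recording the specialization-closedness $\V(\p)\subseteq\sing R$, which is needed for $\p\in\II(\s(\p))$ but left tacit there. In part (2), however, you take a genuinely different route. The paper runs the classical ``conductor'' argument for showing a maximal element of an ideal family is prime: given $a,b\notin\p$ with $ab\in\p$, the ideals $\p+(a)$ and $\p:a$ strictly contain $\p$, maximality puts $R/(\p+(a))$ and $R/(\p:a)\cong(\p+(a))/\p$ in $\X$, and the short exact sequence $0\to(\p+(a))/\p\to R/\p\to R/(\p+(a))\to 0$ then forces the contradiction $R/\p\in\X$. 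You instead take a prime filtration of $R/I$ and argue that, since the filtration quotients $R/\p_i$ (with $\p_i\supseteq I$ and $\V(\p_i)\subseteq\sing R$) cannot all lie in $\X$, some $\p_i\in\II(\X)$, whence $I=\p_i$ is prime. Both are standard tools; the paper's argument is a self-contained two-element primality test, while yours packages the same content through the structure theory of finitely generated modules and has the small bonus of exhibiting $I$ as a filtration prime of $R/I$. Either closes the lemma cleanly.
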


\begin{proof}
(1) We easily verify the following, which shows that the subcategory $\s(\p)$ of $\ds(R)$ is thick.
\begin{equation}\label{8}
\s(\p)= \ssupp^{-1} \{\q \in \sing R \mid \q \nsubseteq\p \}.
\end{equation}
Since $\p$ is in $\sing R$, we have $(R/\p)_\p \not\cong 0$ in $\ds(R_\p)$.
Hence $\p$ belongs to $\II(\s(\p))$.
Any ideal $I$ belonging to $\II(\s(\p))$ satisfies $R/I \not\in \s(\p)$, which implies $I \subseteq \p$.
Thus, $\p$ is a unique maximal element of $\II(\s(\p))$.

(2) Pick a maximal element $\p$ of $\II(\X)$; note $\p\ne R$.
Let $a, b \in R\setminus\p$ elements with $ab \in \p$.
Then the ideals $\p+(a)$ and $\p : a$ strictly contain $\p$.
Therefore $R/\p+(a)$ and $R/ (\p: a) \cong\p+(a)/ \p$ belong to $\X$ by the maximality of $\p$.
The exact sequence $0 \to \p+(a)/ \p \to R/\p \to R/\p+(a) \to 0$ implies $R/\p \in \X$, which is a contradiction.
Thus $\p$ is prime.
As $\p\in\II(\P)$, we have $\V(\p)\subseteq\sing R$, which implies $\p\in\sing R$.
\end{proof}

The following proposition should be compared with Corollary \ref{maps2}.
They are similar, but there are some differences.
In particular, the inclusion relations between $\P$ and $\s(\pp(\P))$ are opposite.

\begin{prop}\label{maps}
\begin{enumerate}[\rm(1)]
\item
The maps $\s$ and $\pp$ are order-reversing with $\pp \cdot \s = 1$, and $\s$ is continuous.
\item
The map $\pp$ is also continuous, if either $\pp$ is injective and $\sing R$ is closed, or $\sing R$ is finite.
\item
Suppose that $R$ is a Gorenstein local ring.
For any $\P\in\spc\ds(R)$ there is an inclusion $\s(\pp(\P))\subseteq \P$.
\end{enumerate}
\end{prop}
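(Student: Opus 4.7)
For part (1), the order-reversing of $\s$ follows from functoriality of localization: if $\p\subseteq\q$, then $R_\p$ is a further localization of $R_\q$ (inverting $\q\setminus\p$), so $M_\q\cong0$ in $\ds(R_\q)$ forces $M_\p\cong0$ in $\ds(R_\p)$, giving $\s(\q)\subseteq\s(\p)$. For $\pp$, if $\P\subseteq\Q$ then $\II(\Q)\subseteq\II(\P)$, so $\pp(\Q)\in\II(\P)$ and the unique-maximality of $\pp(\P)$ forces $\pp(\Q)\subseteq\pp(\P)$. The identity $\pp\cdot\s=1$ is Lemma \ref{sprm}(1). For continuity of $\s$, I verify the equality $\s^{-1}(\spp M)=\{\p\in\sing R\mid M_\p\not\cong0\text{ in }\ds(R_\p)\}=\ssupp M$, which is closed in $\sing R$ by Remark \ref{9}(2).

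For part (2), my plan is to establish the equality $\pp^{-1}(\V(I)\cap\sing R)=\spp\bigl(\bigoplus_{\q\in F}R/\q\bigr)$ for a suitable finite set $F\subseteq\V(I)\cap\sing R$; the right-hand side is closed in $\spc\ds(R)$ by Remark \ref{11}(3). The inclusion $\supseteq$ is general: if $R/\q\notin\P$ with $\q\in F$, then $\q\in\II(\P)$, so $\q\subseteq\pp(\P)$ and hence $I\subseteq\pp(\P)$, so $\pp(\P)\in\V(I)\cap\sing R$ using Lemma \ref{sprm}(2). For the reverse, in case (b) I take $F=\V(I)\cap\sing R$, which is finite by hypothesis; then $\pp(\P)\in F$ by Lemma \ref{sprm}(2), and $R/\pp(\P)\notin\P$ by the very definition of $\II(\P)$. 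In case (a), I take $F$ to be the (finitely many) minimal primes of $\V(I)$, which all lie in $\sing R$ since $\V(I)\subseteq\sing R$; for $\P$ with $I\subseteq\pp(\P)$, some $\q_i\in F$ satisfies $\q_i\subseteq\pp(\P)$, and injectivity of $\pp$ yields $\P=\s(\pp(\P))$, whereupon Remark \ref{9}(1) gives $\ssupp(R/\q_i)=\V(\q_i)\ni\pp(\P)$, so $R/\q_i\notin\P$.

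For part (3), given $M\in\s(\pp(\P))$, I first replace $M$ by a maximal Cohen--Macaulay representative in $\ds(R)$ via the Gorenstein MCM approximation, then set $I:=\operatorname{ann}_R\lHom_R(M,M)$. Since $\lHom_R(M,M)$ is finitely generated with support equal to $\ssupp M$, and $\pp(\P)\notin\ssupp M$, we have $\V(I)=\ssupp M\subseteq\sing R$ and $I\not\subseteq\pp(\P)$. The primality of $\P$ then forces $R/I\in\P$. To conclude $M\in\P$ I show $M\in\thick_{\ds(R)}(R/I)$ as follows: each generator $x$ of $I$ annihilates $\operatorname{id}_M$ in $\lHom_R(M,M)$, so the cone of $x\colon M\to M$ in $\db(R)$---namely $M\otimes^L_R\K(x)$---is isomorphic to $M\oplus M[1]$ in $\ds(R)$; iterating over generators of $I$ exhibits $M$ as a direct summand of $M\otimes^L_R\K(I)$ in $\ds(R)$. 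The main obstacle is to show $M\otimes^L_R\K(I)\in\thick_{\ds(R)}(R/I)$; since its cohomologies are finitely generated $R/I$-modules (the Koszul homologies), I would argue via prime filtrations and noetherian induction on the support, using the Gorenstein hypothesis to pass to MCM representatives, that every such module belongs to $\thick_{\ds(R)}(R/I)$.
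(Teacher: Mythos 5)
Parts (1) and (2) are correct. Your part (1) is essentially the paper's argument. For part (2) the paper reduces, via a preliminary claim, to showing that $\pp^{-1}(\V(\p))$ is closed for each $\p\in\sing R$, and then identifies this set separately in the two cases (injective: $\pp^{-1}(\V(\p))=\spp(R/\p)$; finite: $\pp^{-1}(\V(\p))=\bigcup_{\q\in\V(\p)}\spp(R/\q)$). Your single identity $\pp^{-1}(\V(I)\cap\sing R)=\spp\bigl(\bigoplus_{\q\in F}R/\q\bigr)$ with the two choices of $F$ packages the same content a bit more cleanly, and the checks you give are right (in particular the use of specialization-closedness of $\sing R$ to get $\V(\q)\subseteq\sing R$, hence $\q\in\II(\P)$).

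Part (3) has a genuine gap at the last step. You reduce to showing $M\otimes^{\mathbf{L}}_R\K(I)\in\thick_{\ds(R)}(R/I)$ and propose to prove, via prime filtrations, that \emph{every} finitely generated $R/I$-module lies in $\thick_{\ds(R)}(R/I)$. This is false for Gorenstein rings that are not hypersurfaces. For instance, over $R=k[[x,y]]/(x^2,y^2)$ the module $R/(x)$ is periodic, so every object of $\thick_{\ds(R)}(R/(x))$ has complexity at most $1$; yet $k=R/\m$ is an $R/(x)$-module with $\cx_R k=2$, so $k\notin\thick_{\ds(R)}(R/(x))$. More to the point, since your construction exhibits $M$ as a direct summand of $M\otimes^{\mathbf{L}}\K(I)$, the claim would force $M\in\thick_{\ds(R)}(R/I)$, i.e.\ $\thick_{\ds(R)}(R/I)=\ssupp^{-1}\V(I)$, which is precisely what fails in codimension $\ge 2$ (by Stevenson's classification the thick subcategory generated by $R/I$ can be strictly smaller than $\ssupp^{-1}\V(I)$). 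The detour through $\thick_{\ds(R)}(R/I)$ is unnecessary: the prime filtrations of the Koszul homologies give $M\otimes^{\mathbf{L}}\K(I)\in\thick_{\ds(R)}\{R/\p\mid\p\in\V(I)\}$, and it suffices to show directly that each such $R/\p$ lies in $\P$. Indeed $\pp(\P)\notin\ssupp M=\V(I)$, so for $\p\in\V(I)$ one cannot have $\p\subseteq\pp(\P)$; together with $\V(\p)\subseteq\V(I)\subseteq\sing R$ this shows $\p\notin\II(\P)$ (by unique maximality of $\pp(\P)$), i.e.\ $R/\p\in\P$. Hence $M\otimes^{\mathbf{L}}\K(I)\in\P$ and $M\in\P$. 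With this repair your proof of (3) is correct and is a genuinely different, more self-contained route than the paper's, which instead considers $\X:=\thick\{R/\p\mid\p\in\sing R,\ \p\not\subseteq\pp(\P)\}\subseteq\P$, computes $\ssupp\X$, and invokes \cite[Corollary 4.11]{Tak10} to conclude $\X=\ssupp^{-1}(\ssupp\X)=\s(\pp(\P))$.
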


\begin{proof}
(1) It is straightforward that $\s,\pp$ are order-reversing maps, while Lemma \ref{sprm}(1) implies $\pp \cdot \s = 1$.

(2) First of all, we claim that if $\sing R$ is closed and so is $\pp^{-1}(\V(\p))$ for each $\p\in\sing R$, then $\pp$ is continuous.
Indeed, write $\sing R=\V(I)$ with an ideal $I$ of $R$.
Each closed subset of $\sing R$ is of the form $\V(J)\cap\sing R=\V(I+J)$, where $J$ is an ideal of $R$.
Let $\p_1,\dots,\p_n$ be the minimal primes of $I+J$.
Then for each $i$ the prime ideal $\p_i$ is in the singular locus of $R$, and $\pp^{-1}(\V(\p_i))$ is closed by assumption.
Hence $\pp^{-1}(\V(J)\cap\sing R)=\bigcup_{i=1}^n\pp^{-1}(\V(\p_i))$ is also closed, and the claim follows.

Now, assume that $\pp$ is injective.
Then $\pp$ is a bijection with $\pp^{-1}=\s$.
Fix $\P\in\spc\ds(R)$.
We have
$$
\ssupp \P = \ssupp \s(\pp(\P)) = \{\q \in \sing R \mid \q \not\subseteq \pp(\P) \}
$$
by \eqref{8} and Remark \ref{9}(1).
Take any $\p\in\sing R$.
Then $\P\in\pp^{-1}(\V(\p))$ if and only if $\p\subseteq\pp(\P)$.
Also, $\P\in\spp(R/\p)$ if and only if $R/\p\notin\P$, and in this case $\p\in\II(\P)$ and hence $\p\subseteq\pp(\P)$.
If $\p\subseteq\pp(\P)$ and $R/\p\in\P$, then $\p\in\V(\p)=\ssupp(R/\p)\subseteq\ssupp\P$, which gives a contradiction.
Consequently, we obtain $\pp^{-1}(\V(\p)) = \spp (R/\p)$ for all $\p\in\sing R$.
The above claim shows that $\pp$ is continuous.

Next, assume that $\sing R$ is a finite set.
Then, in particular, $\sing R$ is a closed subset of $\spec R$, and by the above claim it suffices to show that for each $\p\in\sing R$ one has
$$
\textstyle
\pp^{-1}(\V(\p)) = \bigcup_{\q \in \V(\p)} \spp(R/\q)
$$
because by assumption the union is finite and hence it is closed.
Note that each $\q\in\V(\p)$ is in the singular locus of $R$.
A prime thick subcategory $\P$ of $\ds(R)$ belongs to the right-hand side of the above equality if and only if $R/\q\notin\P$ for some $\q\in\V(\p)$, if and only if there exists a prime ideal $\q$ of $R$ such that $\p\subseteq\q\in\II(\P)$, if and only if $\p\subseteq\pp(\P)$, if and only if $\P$ belongs to the left-hand side of the above equality.

(3) Consider the thick subcategory $\X:=\thick \{R/\p \mid \p\in\sing R,\,\p \not\subseteq \pp(\P)\}$ of $\ds(R)$.
We observe that $\X$ is contained in $\P$ and that the equality $\ssupp \X = \{\p \in \sing R \mid \p \not\subseteq \pp(\P) \}$ holds.
Therefore, $R/ \p$ belongs to $\X$ for every $\p \in \ssupp \X$.
Since $R$ is a Gorenstein local ring, we can apply \cite[Corollary 4.11]{Tak10} to get the equality $\X =\ssupp^{-1} (\ssupp \X)$, whose right-hand side coincides with $\s(\pp(\P))$ by \eqref{8}.
\end{proof}

Let $R$ be a local ring with residue field $k$.
For $M \in \db(R)$, we define its {\em complexity} by
$$
\cx_R M := \inf \{ c \in \ZZ_{\ge0}\mid \dim_k \Ext_R^n(M, k) \le r n^{c-1} \mbox{ for some $r\in\RR$ and for all } n \gg 0 \}.
$$
We refer the reader to \cite[\S4.2]{Av} for fundamental properties of this numerical invariant.
Since $\cx_R P = 0$ for $P \in \dpf(R)$, the complexity is well-defined on the isomorphism class of each object of $\ds(R)$.
Now we state and prove the following theorem, which should be compared with Theorem \ref{bij}.

\begin{thm}\label{sgbij}
Let $R$ be a local ring with closed singular locus and admitting a complex of finite positive complexity (e.g., let $R$ be a singular excellent local complete intersection).
The following are equivalent.
\begin{enumerate}[\rm(1)]
\item 
The maps $\pp: \spc \ds(R) \rightleftarrows \sing R :\s$ are mutually inverse bijections.
\item
The maps $\pp: \spc \ds(R) \rightleftarrows \sing R :\s$ are mutually inverse homeomorphisms.
\item 
The ring $R$ is a hypersurface.
\end{enumerate}
\end{thm}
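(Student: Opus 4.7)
The plan is to prove (2)$\Rightarrow$(1)$\Rightarrow$(2) using the continuity results of Proposition~\ref{maps}, (3)$\Rightarrow$(1) via Takahashi's classification of thick subcategories of $\ds(R)$ for hypersurfaces \cite{Tak10}, and (1)$\Rightarrow$(3) via cohomological-support machinery; this last implication will be the substantive one.

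For the easy equivalence (1)$\Leftrightarrow$(2): (2)$\Rightarrow$(1) is trivial, and for the converse, under (1) the map $\pp$ is bijective, hence injective, and $\sing R$ is closed by hypothesis, so Proposition~\ref{maps}(2) yields continuity of $\pp$. Since $\s$ is always continuous by Proposition~\ref{maps}(1), mutually inverse continuous bijections are homeomorphisms.

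For (3)$\Rightarrow$(1), hypersurfaces are Gorenstein, so Proposition~\ref{maps}(3) already gives $\s(\pp(\P))\subseteq\P$ for every prime $\P$; combined with $\pp\cdot\s=1$, it then suffices to show the reverse inclusion. I would invoke Takahashi's classification \cite{Tak10}, which states that for a hypersurface $R$ the map $\ssupp$ is a bijection between $\th(\ds(R))$ and the specialization-closed subsets of $\sing R$, with every thick subcategory $\X$ satisfying $\X=\ssupp^{-1}(\ssupp\X)$. Since $\ssupp(R/\p)=\V(\p)$ and $\ssupp\X$ is specialization-closed, one has $R/\p\notin\X$ iff $\p\notin\ssupp\X$ for each $\p\in\sing R$, so $\II(\P)\cap\sing R=\sing R\setminus\ssupp\P$. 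Lemma~\ref{sprm}(2) places $\max\II(\P)$ inside $\sing R$, and primality of $\P$ therefore forces $\sing R\setminus\ssupp\P$ to have $\pp(\P)$ as its unique maximal element; that gives $\ssupp\P=\ssupp\s(\pp(\P))$ and, by the classification, $\P=\s(\pp(\P))$.

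The implication (1)$\Rightarrow$(3) is the main obstacle. Assume (1) and suppose for contradiction that $R$ is not a hypersurface. The strategy is to use the given complex of positive finite complexity to manufacture two distinct prime thick subcategories with the same $\pp$-value, contradicting injectivity of $\pp$. In the motivating complete intersection setting of codimension $c\geq 2$, Stevenson's classification parameterizes $\th(\ds(R))$ by Thomason subsets of a projective scheme over $\sing R$ with $\PP^{c-1}$-fibres; each closed point of the fibre above a chosen $\p\in\sing R$ yields a prime thick subcategory whose $\pp$-value is $\p$, and for $c\geq 2$ the fibre has more than one such point, supplying the desired contradiction. In the general setting of the theorem, one extracts an analogous cohomological support-variety construction from the complex of positive finite complexity. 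The hard technical step is to make this construction precise in the full generality of the theorem and to verify that the extra primes really lie outside the image of $\s$.
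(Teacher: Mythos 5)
Your handling of (1)$\Leftrightarrow$(2) and (3)$\Rightarrow$(1) is sound and differs only mildly from the paper's route: the paper proves (3)$\Rightarrow$(2) directly, establishing surjectivity of $\s$ via \cite[Theorem 6.8]{Tak10} (the membership criterion $R/\p\in\thick X\Leftrightarrow\ssupp(R/\p)\subseteq\ssupp X$), whereas you go through the classification of thick subcategories of $\ds(R)$ over a hypersurface; both rest on the same source and are valid.

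The genuine gap is in (1)$\Rightarrow$(3), which you yourself flag as incomplete. The approach via Stevenson's classification and projective fibres is much heavier than necessary, requires $R$ to already be a complete intersection (which the hypotheses do not assume), and the crucial step of verifying that the extra prime thick subcategories lie outside the image of $\s$ is left unproven. The paper instead gives a short and elementary argument that sidesteps all of this. Consider the full subcategory $\C\subseteq\ds(R)$ of complexes of finite complexity. It is thick and, by hypothesis, nonzero. If $k=R/\m\notin\C$, then $\m\in\II(\C)$ (as $R$ is singular, so $\m\in\sing R$); since $\m$ is the unique maximal ideal, $\II(\C)$ has $\m$ as its unique maximal element, so $\C$ is prime with $\pp(\C)=\m$. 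Condition (1) then forces $\C=\s(\pp(\C))=\s(\m)=\zero$, a contradiction. Therefore $k\in\C$, so $\cx_R k<\infty$, and $R$ is a complete intersection by Gulliksen's theorem \cite[Theorem 2.3]{Gul}. Bergh \cite[Proposition 2.2]{Ber} then supplies a module of complexity $1$, so the thick subcategory $\C'$ of complexes of complexity at most $1$ is nonzero; the identical primality argument shows $k\in\C'$, hence $\cx_R k\le 1$ and $R$ is a hypersurface. The key idea you are missing is to use the complexity-bounded subcategories themselves as test prime thick subcategories and to exploit (1) to conclude they must be $\zero$ if they miss $k$; no support-variety machinery is needed.
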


\begin{proof}
The implication (2) $\Rightarrow$ (1) is clear.

We show the implication (1) $\Rightarrow$ (3).
Let $\C$ be the full subcategory of $\ds(R)$ consisting of complexes with finite complexity.
Then $\C$ is a thick subcategory, and $\C\ne\zero$ by assumption. 
If $\C$ does not contain the residue field $k$ of $R$, then $\II(\C)$ contains the maximal ideal $\m$ of $R$, that is, $\C$ is a prime thick subcategory with $\pp(\C) = \m$.
Hence $\C=\s(\pp(\C))=\s(\m) = \zero$, which gives a contradiction.
Thus $k$ is in $\C$, and $R$ is a complete intersection by \cite[Theorem 2.3]{Gul}.
There is a finitely generated $R$-module $M$ with complexity $1$ by \cite[Proposition 2.2]{Ber}.
The full subcategory $\C'$ consisting of objects of $\ds(R)$ with complexity at most $1$ is thick and nonzero.
An analogous argument as above shows that $k$ belongs to $\C'$.
Therefore, $R$ is a hypersurface.

Finally, we prove the implication (3) $\Rightarrow$ (2).
In view of (1) and (2) of Proposition \ref{maps}, it suffices to show that $\s$ is surjective.
We first claim that
$$
\s(\p) =\{X \in \ds(R) \mid R/\p \not\in \thick X \}
$$
for each $\p\in\sing R$.
Indeed, for each  $X\in\ds(R)$, one has $X \not\in \s(\p)$ if and only if $X_\p \not\cong 0$, if and only if $\V(\p)\subseteq \ssupp X$, if and only if $\ssupp(R/\p)\subseteq \ssupp X$.
Since $R$ is a local hypersurface, the last condition is equivalent to saying that $R/\p \in \thick X$ by \cite[Theorem 6.8]{Tak10}.
Thus the claim follows.

Let $\P$ be a prime thick subcategory of $\ds(R)$.
Setting $\p = \pp(\P)$, we have $\s(\p)=\s(\pp(\P))\subseteq\P$ by Proposition \ref{maps}(3).
Take any object $X\in\P$.
Then $\thick X$ is contained in $\P$ and $R/\p=R/\pp(\P)$ does not belong to $\P$, which yields $R/\p\notin\thick X$.
The above claim implies that $X$ belongs to $\s(\p)$, and we obtain $\s(\p)=\P$.
It follows that $\s$ is surjective, which completes the proof of the assertion.
\end{proof}

\begin{rem}
There is a case where $R$ is not a hypersurface but $\pp,\s$ give mutually inverse homeomorphisms.
For instance, let $R$ be a Cohen--Macaulay local ring with quasi-decomposable maximal ideal (in the sense of \cite{fiber}) which is locally a hypersurface on the punctured spectrum.
Then, by \cite[Theorem 4.5]{fiber} every thick subcategory of $\ds(R)$ is of the form $\ssupp^{-1} W$ with $W$ a specialization-closed subset of $\sing R$, even if $R$ is not a hypersurface.
Hence, the same argument as above proves what we want.
For such a ring $R$, the existence of complexes of finite positive complexity fails.
\end{rem}

Next, as we did in the previous section, we introduce tame thick subcategories of $\ds(R)$ and relate them with radical ones.
Most of the arguments in the previous section does work for $\ds(R)$ just by replacing $\D,\supp_\D,\spec,\K(\p)$ with $\ds(R),\ssupp,\sing,R/\p$ respectively.
We will give definitions, properties, results, and proofs that are essentially different from the ones given in the previous section.

\begin{dfn}
A thick subcategory $\X$ of $\ds(R)$ is said to be {\it tame} if there exists a subset $W$ of $\sing R$ such that $\X = \ssupp^{-1}W$.
Denote by $\Tame\ds(R)$ the set of tame thick subcategories of $\ds(R)$.
We put $\X^\tame:=\ssupp^{-1}(\ssupp \X)$ for each full subcategory $\X$ of $\ds(R)$.
Note that $\X^\tame$ is the smallest tame thick subcategory of $\ds(R)$ containing $\X$, and we call $\X^\tame$ the {\em tame closure} of $\X$.
\end{dfn} 

\begin{prop}\label{12}
\begin{enumerate}[\rm(1)]
\item
For $W\subseteq\sing R$ one has $\ssupp^{-1}(\ssupp(\ssupp^{-1}W))=\ssupp^{-1}W=\bigcap_{\p\in W^\complement}\s(\p)$.
\item
For a full subcategory $\X$ of $\ds(R)$ it holds that $\s^{-1}(\spp\X)=\ssupp\X=\ssupp(\X^\tame)$.
\item
For a specialization-closed subset $W$ of $\sing R$ the equality $\pp^{-1}(W)=\spp(\thick\{R/\p\mid\p\in W\})$ holds.
If $R$ is a Gorenstein local ring, then the equality $\pp^{-1}(W)=\spp(\ssupp^{-1}W)$ also holds.
\end{enumerate}
\end{prop}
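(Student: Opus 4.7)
My plan is to dispatch the three parts in sequence, following the same template as the proof of Theorem~\ref{cmpr} but modifying for the singularity-category setting. The three key ingredients are the Galois-connection behaviour of the pair $(\ssupp,\ssupp^{-1})$, the definitional link between $\s(\p)$ and $\ssupp$, and the observation that $\sing R$ is itself specialization-closed inside $\spec R$, so that $\V(\p)\subseteq\sing R$ whenever $\p\in\sing R$.

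For part (1), I would first prove the identity $\ssupp^{-1}(\ssupp(\ssupp^{-1}W))=\ssupp^{-1}W$. The inclusion $\supseteq$ follows from $\ssupp(\ssupp^{-1}W)\subseteq W$, which is immediate from the definition of the singular support of a full subcategory; the reverse is clear since any $M\in\ssupp^{-1}W$ tautologically satisfies $\ssupp M\subseteq\ssupp(\ssupp^{-1}W)$. For the second equality, I would unwind $M\in\bigcap_{\p\in W^\complement}\s(\p)$ as saying $M_\p\cong 0$ in $\ds(R_\p)$ for every $\p\notin W$, i.e.\ $\ssupp M\cap W^\complement=\emptyset$, i.e.\ $M\in\ssupp^{-1}W$.

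For part (2), the first equality is a definitional chase: $\p\in\s^{-1}(\spp\X)$ iff $\s(\p)\in\spp\X$ iff $\X\nsubseteq\s(\p)$ iff some $M\in\X$ satisfies $M_\p\not\cong 0$, i.e.\ $\p\in\ssupp\X$. The second equality combines $\X\subseteq\X^\tame$ (which gives $\ssupp\X\subseteq\ssupp\X^\tame$) with $\ssupp(\ssupp^{-1}(\ssupp\X))\subseteq\ssupp\X$, which is a special case of part (1).

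For part (3), I would imitate step (4) in the proof of Theorem~\ref{cmpr}. By Remark~\ref{11}, the right-hand side of the first equality rewrites as $\bigcup_{\p\in W}\spp(R/\p)$. The inclusion $\subseteq$: if $\pp(\P)\in W$, then $\pp(\P)\in\II(\P)$, so $R/\pp(\P)\notin\P$ and hence $\P\in\spp(R/\pp(\P))$. The inclusion $\supseteq$: if $\P\in\spp(R/\p)$ with $\p\in W$, then $R/\p\notin\P$; the specialization-closedness of $\sing R$ in $\spec R$ forces $\V(\p)\subseteq\sing R$, so $\p\in\II(\P)$ and $\p\subseteq\pp(\P)$; since $\pp(\P)\in\sing R$ by Lemma~\ref{sprm}(2) and $W$ is specialization-closed in $\sing R$, we conclude $\pp(\P)\in W$. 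For the Gorenstein addendum I would prove the sharper identity $\thick\{R/\p\mid\p\in W\}=\ssupp^{-1}W$, which immediately gives equality of $\spp$. The inclusion $\subseteq$ uses $\ssupp(R/\p)=\V(\p)\subseteq W$ from Remark~\ref{9}(1) and specialization-closedness; the reverse invokes \cite[Corollary~4.11]{Tak10} exactly as in the proof of Proposition~\ref{maps}(3), yielding $\thick\{R/\p\mid\p\in W\}=\ssupp^{-1}(\ssupp(\thick\{R/\p\mid\p\in W\}))=\ssupp^{-1}W$. The main obstacle is this Gorenstein addendum: it genuinely invokes the singularity-category classification theorem of \cite{Tak10}, and requires keeping straight that $\sing R$ itself is specialization-closed in $\spec R$ so that the ambient-specialization condition in the definition of $\II$ is compatible with the relative-specialization hypothesis on $W$.
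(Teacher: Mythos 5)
Your proof is correct, and parts (1), (2), and the first identity in (3) follow essentially the same template the paper invokes (the analogues from Section~3, which the paper explicitly declines to repeat). Two minor bookkeeping slips in part (1)-(2): you have the $\subseteq$ and $\supseteq$ labels swapped in the first identity of (1) (the inclusion that needs $\ssupp(\ssupp^{-1}W)\subseteq W$ is $\subseteq$, and the tautological one is $\supseteq$), and in (2) the fact $\ssupp(\ssupp^{-1}V)\subseteq V$ is not literally a special case of part~(1) as stated but rather the elementary observation used to prove it. Neither affects the substance.

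Where you genuinely diverge from the paper is the Gorenstein addendum of~(3). You prove the stronger categorical identity $\thick\{R/\p\mid\p\in W\}=\ssupp^{-1}W$ by applying \cite[Corollary~4.11]{Tak10} to the thick subcategory $\thick\{R/\p\mid\p\in W\}$, whose singular support is $W$ and which tautologically contains $R/\q$ for every $\q\in W$; the equality of $\spp$'s is then immediate. The paper instead proves only the $\spp$-level equality $\pp^{-1}(W)=\spp(\ssupp^{-1}W)$ directly, arguing pointwise on prime thick subcategories: for $\P\in\spp(\ssupp^{-1}W)$ it takes $X\in\ssupp^{-1}W$ with $\P\in\spp X$ and uses Proposition~\ref{maps}(3) (the containment $\s(\pp(\P))\subseteq\P$) to force $X_{\pp(\P)}\not\cong0$ and hence $\pp(\P)\in W$. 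Both routes ultimately rest on \cite[Corollary~4.11]{Tak10}, since Proposition~\ref{maps}(3) cites the same result. Your version is a bit cleaner and establishes that the two pre-images on the left in the diagram of Theorem~\ref{10} agree as subcategories, not merely after applying $\spp$, which is genuinely more information (since $\spp A=\spp B$ only identifies radicals by Theorem~\ref{class}). The paper's route has the advantage of staying inside the support formalism established in (1)-(2) and not re-deriving a fact that Lemma~\ref{mxtm} will repeat anyway.
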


\begin{proof}
The only statement essentially different from what we got in the previous section is the latter assertion of (3), so we only give a proof of it.
Let $\P$ be a prime thick subcategory of $\ds(R)$, and set $\p:=\pp(\P)$.
If $\P\in\pp^{-1}(W)$, then $\p\in W$ and $\ssupp(R/\p)=\V(\p)\subseteq W$ as $W$ is specialization-closed.
Hence $R/\p\in\ssupp^{-1}W$.
We have $R/\p\notin\P$ since $\p\in\II(\P)$.
Thus $\P\in\spp(R/\p)\subseteq\spp(\ssupp^{-1}W)$.
Conversely, suppose $\P\in\spp(\ssupp^{-1}W)$.
Then $\P\in\spp X$ for some $X\in\ssupp^{-1}W$.
If $X_\p\cong0$, then by Proposition \ref{maps}(3) we get $X\in\s(\p)=\s(\pp(\P))\subseteq\P$, which implies $\P\notin\spp X$, a contradiction.
Therefore $X_\p\not\cong0$, which gives $\p\in\ssupp X\subseteq W$.
Thus $\pp(\P)=\p\in W$, and we obtain $\P\in\pp^{-1}(W)$.
Now we conclude that the equality $\pp^{-1}(W)=\spp(\ssupp^{-1}W)$ holds.
\end{proof}

We should compare the following Theorem \ref{10} with Theorem \ref{cmpr}.

\begin{thm}\label{10}
Suppose that $R$ is a Gorenstein local ring.
Then there is a diagram of maps of sets
$$
\xymatrix{
\rad \ds(R) \ar@<0.5ex>[r]^-{\spp} \ar@<0.5ex>[d]^{{()}^\tame}   &
\st\ds(R)\ar@<0.5ex>[l]^-{\spp^{-1}} \ar@<0.5ex>[d]^{\s^{-1}} \\
\Tame\ds(R) \ar@<0.5ex>[u]^{\inc}  \ar@<0.5ex>[r]^-{\ssupp} &  \spcl(\sing R) \ar@<0.5ex>[l]^-{\ssupp^{-1}} \ar@<0.5ex>[u]^{\pp^{-1}}
}
$$
The horizontal maps are mutually inverse bijections, and the compositions of the maps ending at bottom sets are commutative.
Furthermore, $\rad \ds(R) = \Tame \ds(R)$ if and only if the maps $\pp: \spc\ds(R)\rightleftarrows \sing R :\s$ are mutually inverse bijections.
\end{thm}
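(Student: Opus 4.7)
The plan is to mimic the proof of Theorem \ref{cmpr}, using Proposition \ref{12} as the singularity-category counterpart of the corresponding computations there. The Gorenstein hypothesis enters only through Proposition \ref{maps}(3) and the second part of Proposition \ref{12}(3), which are precisely what is needed to get the vertical map $\pp^{-1}$ well-defined on the right.

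First I would handle the horizontal bijections. The top row $(\spp,\spp^{-1})$ is immediate from Theorem \ref{class} applied to $\T=\ds(R)$. For the bottom row, I would argue that $\ssupp^{-1}:\spcl(\sing R)\to\Tame\ds(R)$ is well-defined by definition, and $\ssupp:\Tame\ds(R)\to\spcl(\sing R)$ is well-defined because singular supports are unions of Zariski-closed sets by Remark \ref{9}(2). The identity $\ssupp^{-1}\cdot\ssupp=\mathrm{id}_{\Tame}$ is Proposition \ref{12}(1). For the reverse composition, given a specialization-closed $W\subseteq\sing R$, the inclusion $\ssupp(\ssupp^{-1}W)\subseteq W$ is tautological, while for $\p\in W$ Remark \ref{9}(1) gives $\ssupp(R/\p)=\V(\p)\subseteq W$ (using specialization-closedness), so $R/\p\in\ssupp^{-1}W$ and $\p\in\ssupp(\ssupp^{-1}W)$.

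Next I would check the vertical maps. Tame closure maps $\rad\ds(R)\to\Tame\ds(R)$ by Proposition \ref{12}(1), and the inclusion $\Tame\ds(R)\subseteq\rad\ds(R)$ follows from the formula $\ssupp^{-1}W=\bigcap_{\p\in W^\complement}\s(\p)$ of Proposition \ref{12}(1), exhibiting any tame subcategory as an intersection of primes. On the right column, $\s^{-1}:\st\ds(R)\to\spcl(\sing R)$ is well-defined by Proposition \ref{12}(2), which says $\s^{-1}(\spp\X)=\ssupp\X$, and $\pp^{-1}:\spcl(\sing R)\to\st\ds(R)$ is well-defined by the Gorenstein case of Proposition \ref{12}(3), which gives $\pp^{-1}(W)=\spp(\ssupp^{-1}W)\in\st\ds(R)$. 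Commutativity of the two paths from $\rad\ds(R)$ to $\spcl(\sing R)$ is exactly the equality $\s^{-1}\cdot\spp=\ssupp\cdot()^{\tame}$, which is Proposition \ref{12}(2) again; commutativity of the two paths from $\st\ds(R)$ to $\Tame\ds(R)$ follows formally by inverting the horizontal bijections. The parenthetical identities $()^{\tame}\cdot\inc=\mathrm{id}$ and $\s^{-1}\cdot\pp^{-1}=\mathrm{id}$ then drop out, the latter by combining Propositions \ref{12}(2) and \ref{12}(3).

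Finally, for the last assertion, the \emph{if} direction is formal from the commutative diagram: if $\pp,\s$ are inverse bijections, every radical thick subcategory is an intersection of primes $\P=\s(\pp(\P))$, hence an intersection of tame subcategories, which is tame since $\bigcap_i\ssupp^{-1}W_i=\ssupp^{-1}\bigcap_i W_i$. For the \emph{only if} direction, assume $\rad\ds(R)=\Tame\ds(R)$; then every prime $\P$ is tame, so $\P=\ssupp^{-1}W$ with $W=\ssupp\P$ specialization-closed. Since $\pp(\P)\in\II(\P)$ we have $R/\pp(\P)\notin\P$, so $\V(\pp(\P))\nsubseteq W$, forcing $\pp(\P)\notin W$ (as $W$ is specialization-closed); then for any $X\in\P$ we have $\ssupp X\subseteq W$, whence $\pp(\P)\notin\ssupp X$ and $X\in\s(\pp(\P))$, giving $\P\subseteq\s(\pp(\P))$. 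Combining with the reverse inclusion from Proposition \ref{maps}(3) yields $\P=\s(\pp(\P))$, so $\s$ is surjective and, together with $\pp\cdot\s=1$, constitutes a mutually inverse pair.

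The main obstacle, and the one place where the Gorenstein hypothesis is used essentially, is verifying that $\pp^{-1}$ lands in $\st\ds(R)$ and that the \emph{only if} direction of the final equivalence goes through; both hinge on Proposition \ref{maps}(3), which fails without the Gorenstein assumption. Everything else is a formal diagram chase once Proposition \ref{12} is in hand.
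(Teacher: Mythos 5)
Your proof is correct and follows essentially the same route as the paper's: establish the horizontal bijections, check well-definedness of the verticals via Proposition \ref{12}, verify the downward commutativity from Proposition \ref{12}(2), and handle the final equivalence via tameness of primes and Proposition \ref{maps}(3). One small misattribution worth flagging: you state that the Gorenstein hypothesis is what is needed to get $\pp^{-1}$ well-defined. In fact the \emph{first} (non-Gorenstein) part of Proposition \ref{12}(3) already shows $\pp^{-1}(W)=\spp(\thick\{R/\p\mid\p\in W\})\in\st\ds(R)$, so well-definedness holds without Gorenstein. What the Gorenstein-only identity $\pp^{-1}(W)=\spp(\ssupp^{-1}W)$ in Proposition \ref{12}(3) actually buys is the additional relation $\inc\cdot\ssupp^{-1}=\spp^{-1}\cdot\pp^{-1}$ (and, by inverting the horizontal bijections, $\spp\cdot\inc=\pp^{-1}\cdot\ssupp$): one writes $\spp^{-1}(\pp^{-1}(W))=\spp^{-1}(\spp(\ssupp^{-1}W))=\sqrt{\ssupp^{-1}W}=\ssupp^{-1}W$, the last step because tame subcategories are radical. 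The paper's own proof singles out precisely these two relations as the only new content relative to Theorem \ref{cmpr}; they are not visible in the literal statement of the theorem but follow readily from your ingredients, so this is a gap in emphasis rather than in substance.
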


\begin{proof}
The only statement essentially different from Theorem \ref{cmpr} is the new commutativity relations $\spp\cdot\inc=\pp^{-1}\cdot\ssupp$ and $\inc\cdot\ssupp^{-1}=\spp^{-1}\cdot\pp^{-1}$, but the former follows from the latter and the two horizontal one-to-one correspondences in the diagram.
Take any $W\in\spcl(\sing R)$.
Then we have
$$
(\inc\cdot\ssupp^{-1})(W)=\ssupp^{-1}W=(\spp^{-1}\cdot\spp)(\ssupp^{-1}W)=(\spp^{-1}\cdot\pp^{-1})(W),
$$
where the last equality follows from Proposition \ref{12}(3).
We conclude that $\inc\cdot\ssupp^{-1}=\spp^{-1}\cdot\pp^{-1}$.
\end{proof}

The following result corresponds to Corollary \ref{6}, which also gives a partial answer to Question \ref{5}.

\begin{cor}
Let $R$ be a local hypersurface with closed singular locus.
One then has the following equalities and one-to-one correspondence, and the maps in the diagram in Theorem \ref{10} are all bijections.
$$
\xymatrix{
\th\ds(R)=\Tame\ds(R)=\rad\ds(R) \ar@<0.5ex>[r]^-{\spp} &
\st\ds(R)=\thom(\spc\ds(R))\ar@<0.5ex>[l]^-{\spp^{-1}}.
}
$$
\end{cor}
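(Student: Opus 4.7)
The plan is to mirror the proof of Corollary \ref{6} step for step, with the singularity-category analogues of its inputs. The role of the Hopkins--Neeman theorem is taken by the second author's classification of thick subcategories of $\ds(R)$ for a local hypersurface \cite[Theorem 6.8]{Tak10}, which asserts that every thick subcategory of $\ds(R)$ has the form $\ssupp^{-1}W$ for some specialization-closed $W\subseteq\sing R$; this immediately yields the equality $\th\ds(R)=\Tame\ds(R)$.

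Since a hypersurface is Gorenstein, Theorem \ref{10} is applicable. Furthermore, because $R$ is a local hypersurface with closed singular locus, Theorem \ref{sgbij} provides mutually inverse homeomorphisms $\pp,\s\colon\spc\ds(R)\rightleftarrows\sing R$, and in particular they are mutually inverse bijections. The final clause of Theorem \ref{10} then yields $\rad\ds(R)=\Tame\ds(R)$, completing the chain $\th\ds(R)=\Tame\ds(R)=\rad\ds(R)$. Combined with the horizontal bijection $(\spp,\spp^{-1})$ from that same theorem, this establishes the desired one-to-one correspondence, and also forces every map in the diagram of Theorem \ref{10} to be a bijection.

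For the equality $\st\ds(R)=\thom(\spc\ds(R))$, I would first observe that $\sing R$ is closed in the noetherian space $\spec R$ and is therefore itself noetherian; in a noetherian space every open set is quasi-compact, so $\thom(\sing R)=\spcl(\sing R)$. Transporting this equality across the homeomorphism $\pp$ identifies $\thom(\spc\ds(R))$ with $\spcl(\sing R)$, and the vertical bijection $\s^{-1}\colon\st\ds(R)\to\spcl(\sing R)$ furnished by Theorem \ref{10} then gives $\st\ds(R)=\thom(\spc\ds(R))$, in exact parallel with the end of the proof of Corollary \ref{6}.

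The main obstacle is essentially bookkeeping rather than any new idea; every substantive input has already been established, and the only mildly subtle point is the identification of Thomason with specialization-closed subsets on the subspace $\sing R$, which reduces to noetherianness inherited from $\spec R$ via the closedness hypothesis on the singular locus.
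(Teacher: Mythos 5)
Your proposal is essentially the paper's own argument filled in with the details: the role of Hopkins--Neeman is played by the hypersurface classification theorem of \cite{Tak10}, giving $\th\ds(R)=\Tame\ds(R)$; the noetherianness of the closed subspace $\sing R$ gives $\thom(\sing R)=\spcl(\sing R)$; and one transports this across the homeomorphism $\pp,\s$ to identify $\thom(\spc\ds(R))$ with $\st\ds(R)$ via the diagram of Theorem \ref{10}. The paper simply states ``essentially the same argument as in the proof of Corollary \ref{6} works,'' and you have unpacked exactly that.

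One small point worth tightening: you invoke Theorem \ref{sgbij} to obtain the homeomorphism, but its hypothesis requires a complex of finite positive complexity, which fails when $R$ is regular (where $\ds(R)=0$, $\sing R=\emptyset$, and everything is trivially true anyway). For a \emph{singular} hypersurface this holds because $\cx_R k=\codim R=1$, so you should either note this or treat the regular case separately. Alternatively, you can avoid Theorem \ref{sgbij} entirely and reach the same conclusion by the route that most closely parallels Corollary \ref{6}: from $\th=\Tame$ and the trivial inclusion $\rad\subseteq\th$, Theorem \ref{10} already forces $\Tame=\rad$; its final clause then gives that $\pp,\s$ are mutually inverse bijections, and Proposition \ref{maps}(1),(2) upgrades this to a homeomorphism using only the closedness of $\sing R$ and injectivity of $\pp$. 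This second route uses no complexity hypothesis at all.
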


\begin{proof}
It follows from \cite[Main Theorem]{Tak10} that every thick subcategory of $\ds(R)$ is tame.
It is easy to see that a subset of $\sing R$ is Thomason if and only if it is specialization-closed.
Now essentially the same argument as in the proof of Corollary \ref{6} works.
\end{proof}

Next, we further study prime thick subcategories of $\ds(R)$.
We introduce the following notion.

\begin{dfn}
For a thick subcategory $\X$ of $\ds(R)$, we define the {\em tame interior} $\X_\tame$ of $\X$ as the largest tame thick subcategory of $\ds(R)$ contained in $\X$.
\end{dfn}

In the case where $R$ is a Gorenstein local ring, one can describe tame interiors explicitly as follows.

\begin{lem}\label{mxtm}
Let $R$ be a Gorenstein local ring.
For a thick subcategory $\X$ of $\ds(R)$ one has
$$
\X_\tame= \thick \{R/\p \mid \p \in \sing R\text{ and }\V(\p) \cap \max \II(\X) = \emptyset \}.
$$
\end{lem}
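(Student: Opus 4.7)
The plan is to prove the two inclusions separately. Write $\Y:=\thick\{R/\p\mid\p\in\sing R,\ \V(\p)\cap\max\II(\X)=\emptyset\}$. To obtain $\Y\subseteq\X_\tame$ I will verify that $\Y$ is a tame thick subcategory contained in $\X$; for the converse $\X_\tame\subseteq\Y$, I will show that every tame thick subcategory of $\X$ already lies inside $\Y$.

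For $\Y\subseteq\X$, pick a generator $R/\p$ of $\Y$. Since regularity localizes, $\sing R$ is specialization-closed, so $\p\in\sing R$ gives $\V(\p)\subseteq\sing R$. Because $R$ is noetherian, every element of $\II(\X)$ sits under some maximal element of $\II(\X)$, so the hypothesis $\V(\p)\cap\max\II(\X)=\emptyset$ forces $\p\notin\II(\X)$, i.e.\ $R/\p\in\X$. For tameness, set $W:=\{\q\in\sing R\mid\V(\q)\cap\max\II(\X)=\emptyset\}$, which is specialization-closed in $\sing R$. Using Remark \ref{9}(1) together with the fact that $\ssupp$ is determined by generators under thick closure, one obtains $\ssupp\Y=W$. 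Since every $\q\in W$ is itself the index of a generator of $\Y$, the Gorenstein hypothesis and \cite[Corollary 4.11]{Tak10}---the same input already used in Proposition \ref{maps}(3)---yield $\Y=\ssupp^{-1}(\ssupp\Y)=\ssupp^{-1}W$, so $\Y$ is tame. Hence $\Y\subseteq\X_\tame$.

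For the reverse inclusion, take any tame thick subcategory $\cZ\subseteq\X$ and write $\cZ=\ssupp^{-1}W'$ with $W'\subseteq\sing R$ specialization-closed. Applying \cite[Corollary 4.11]{Tak10} to $\thick\{R/\p\mid\p\in W'\}$---whose singular support is $W'$ and which contains $R/\p$ for every $\p$ in that support---identifies it with $\ssupp^{-1}W'=\cZ$, so it suffices to verify $R/\p\in\Y$ for each $\p\in W'$. Fix such a $\p$ and any $\q\in\V(\p)$. Specialization-closedness of $W'$ puts $\q$ in $W'$, so $\ssupp(R/\q)=\V(\q)\subseteq W'$ gives $R/\q\in\cZ\subseteq\X$, hence $\q\notin\II(\X)$ and in particular $\q\notin\max\II(\X)$. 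Since $\max\II(\X)\subseteq\sing R$ by Lemma \ref{sprm}(2), this excludes every $\q\in\V(\p)$ from $\max\II(\X)$, so $\V(\p)\cap\max\II(\X)=\emptyset$ and $R/\p\in\Y$. Thus $\cZ\subseteq\Y$, completing the argument.

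The main obstacle is the Gorenstein-specific input \cite[Corollary 4.11]{Tak10}, which is invoked twice: once to upgrade $\Y$ from being merely ``closed under the supports of its generators'' to being genuinely tame, and once to present an arbitrary tame $\cZ$ via its module quotients $R/\p$. Everything else is routine bookkeeping about the poset $\II(\X)$, Remark \ref{9}, and the fact that $\sing R$ is closed under specialization.
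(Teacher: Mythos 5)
Your proof is correct and takes essentially the same approach as the paper's: both reduce each inclusion to bookkeeping on $\max\II(\X)$ after invoking \cite[Corollary 4.11]{Tak10} to identify tame thick subcategories with the thick closure of their cyclic module generators. The only cosmetic difference is that you verify maximality by working directly with an arbitrary tame $\cZ\subseteq\X$, whereas the paper reduces to the building blocks $\ssupp^{-1}\V(\p)$; the underlying computation is the same.
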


\begin{proof}
Let $\Y$ be the right-hand side.
The set $W:=\{\p \in \sing R \mid \V(\p) \cap \max \II(\X) = \emptyset\}$ is a specialization-closed subset of $\sing R$, and $\Y$ is a tame thick subcategory with singular support $W$ by \cite[Corollary 4.11]{Tak10}.
If $\p\in\sing R$ is such that $R/\p\notin\X$, then $\p\in\II(\X)$ and $\V(\p) \cap \max \II(\X) \neq \emptyset$.
Therefore, $\ssupp R/\p=\V(\p)$ is not contained in $W$, which shows that $R/\p$ is not in $\ssupp^{-1}W=\Y$.
Thus, $\Y$ is contained in $\X$.

It remains to show the maximality of $\Y$, and for this, it suffices to verify that if $\p\in\sing R$ is such that $\ssupp^{-1}\V(\p)$ is contained in $\X$, then $\ssupp^{-1}\V(\p)$ is contained in $\Y$.
Assume that $\ssupp^{-1}\V(\p)$ is not contained in $\Y$.
Then $\V(\p)$ is not contained in $W$, and we find an element $\q \in \V(\p)$ such that $\V(\q) \cap \max \II(\X) \neq \emptyset$.
Hence $\p$ is contained in some $\r\in\max \II(\X)$.
Then $R/ \r \in \ssupp^{-1} \V(\p)$ but $R/\r \not\in \X$, which is a contradiction.
Consequently, $\ssupp^{-1}\V(\p)$ is contained in $\Y$, and we are done.
\end{proof}

We obtain a characterization of the prime thick subcategories of $\ds(R)$ in terms of tame interiors.

\begin{prop}\label{chprm}
Let $R$ be a Gorenstein local ring.
A thick subcategory $\X$ of $\ds(R)$ is prime if and only if $\X_\tame = \s(\p)$ for some $\p \in \sing R$.
When this is the case, one has $\pp(\X) = \p$.
\end{prop}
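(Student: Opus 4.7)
The plan is to exploit the bijection between tame thick subcategories of $\ds(R)$ and specialization-closed subsets of $\sing R$ (the bottom row of Theorem \ref{10}), combined with the explicit description of $\X_\tame$ given by Lemma \ref{mxtm} and the formula $\s(\p)=\ssupp^{-1}\{\q\in\sing R\mid \q\nsubseteq\p\}$ coming from \eqref{8}. Since both $\X_\tame$ and $\s(\p)$ are tame, their equality is equivalent to the equality of their singular supports as specialization-closed subsets of $\sing R$, and this reduces the whole proposition to a combinatorial comparison of two such subsets.

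For the forward direction, assume $\X$ is prime with $\pp(\X)=\p$, so that $\max\II(\X)=\{\p\}$. Substituting this into Lemma \ref{mxtm} gives $\X_\tame=\thick\{R/\q\mid \q\in\sing R,\,\q\nsubseteq\p\}$, and the identification made inside the proof of Lemma \ref{mxtm} (via \cite[Corollary 4.11]{Tak10}) rewrites this as $\ssupp^{-1}\{\q\in\sing R\mid \q\nsubseteq\p\}=\s(\p)$.

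For the reverse direction, assume $\X_\tame=\s(\p)$. Taking singular supports of both sides yields
\[
\{\q\in\sing R\mid \V(\q)\cap\max\II(\X)=\emptyset\}=\{\q\in\sing R\mid \q\nsubseteq\p\}.
\]
Passing to complements inside $\sing R$, one obtains that for $\q\in\sing R$, the containment $\q\subseteq\p$ is equivalent to the existence of some $\r\in\max\II(\X)$ with $\q\subseteq\r$. Setting $\q=\p$ produces an $\r_0\in\max\II(\X)$ with $\p\subseteq\r_0$, while setting $\q=\r$ for each $\r\in\max\II(\X)$ forces $\r\subseteq\p$. Hence every maximal element of $\II(\X)$ sits inside $\p$, and $\p$ itself sits inside some maximal element. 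Observing that $\p\in\II(\X)$---since $\V(\p)\subseteq\sing R$ (the singular locus being specialization-closed) and $R/\p\notin\s(\p)=\X_\tame\subseteq\X$ (visible from $\p\in\V(\p)=\ssupp(R/\p)$ while $\p\subseteq\p$)---the maximality of each $\r$ forces $\r=\p$. Thus $\max\II(\X)=\{\p\}$, so $\X$ is prime with $\pp(\X)=\p$.

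The main obstacle will be the complement-and-intersect bookkeeping used to extract $\max\II(\X)=\{\p\}$ in the reverse direction, together with the verification that $\p$ itself lies in $\II(\X)$; the forward direction is essentially an unpacking of Lemma \ref{mxtm}, and the closing clause $\pp(\X)=\p$ drops out as a by-product.
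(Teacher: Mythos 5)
Your plan reproduces the paper's proof in essence: both directions compare $\X_\tame$ with $\s(\p)$ by identifying their singular supports as specialization-closed subsets of $\sing R$, using Lemma \ref{mxtm} together with \eqref{8}. The only organizational difference in the forward direction is that you bypass the appeal to Proposition \ref{maps}(3), instead reading $\thick\{R/\q\mid\q\in\sing R,\ \q\nsubseteq\p\}=\ssupp^{-1}\{\q\in\sing R\mid\q\nsubseteq\p\}$ directly off the proof of Lemma \ref{mxtm} (via \cite[Corollary~4.11]{Tak10}); that is a harmless streamlining.

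There is, however, one genuinely flawed inference in your reverse direction. To justify $\p\in\II(\X)$ you write ``$R/\p\notin\s(\p)=\X_\tame\subseteq\X$,'' trying to deduce $R/\p\notin\X$. But the inclusion $\X_\tame\subseteq\X$ only yields ``if $M\in\X_\tame$ then $M\in\X$,'' not the contrapositive you want; from $R/\p\notin\X_\tame$ one cannot conclude $R/\p\notin\X$. Fortunately this side-observation is redundant and can simply be deleted: you have already deduced that some $\r_0\in\max\II(\X)$ satisfies $\p\subseteq\r_0$, and that every $\r\in\max\II(\X)$ (in particular $\r_0$) satisfies $\r\subseteq\p$, whence $\r_0=\p$ and therefore $\p\in\max\II(\X)\subseteq\II(\X)$. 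With $\p\in\II(\X)$ now in hand, the maximality of each $\r\in\max\II(\X)$ combined with $\r\subseteq\p$ forces $\r=\p$, giving $\max\II(\X)=\{\p\}$ exactly as the paper does.
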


\begin{proof}
Suppose that $\X$ is a prime thick subcategory of $\ds(R)$.
Set $\p=\pp(\X)$.
Proposition \ref{maps}(3) implies that $\s(\p)$ is contained in $\X$, while $\s(\p)$ is tame by \eqref{8}.
Hence $\s(\p)$ is contained in $\X_\tame$.
Let $\q\in\sing R$ be such that $\V(\q)\cap\max\II(\X)=\emptyset$.
Then $\q$ is not contained in $\p$, and $R/\q$ is in $\s(\p)$.
Using Lemma \ref{mxtm}, we obtain $\X_\tame=\s(\p)$.

Conversely, assume $\X_\tame=\s(\p)$.
Then the equality $(\ssupp(\X_\tame))^\complement=(\ssupp\s(\p))^\complement$ holds, which gives
$$
\{\q \in \sing R \mid \V(\q) \cap \max \II(\X) \neq \emptyset \} = \{\q \in \sing R \mid \q \subseteq \p \}
$$
by Lemma \ref{mxtm} and \eqref{8}.
The right-hand side has a unique maximal element, which is $\p$.
Note that every maximal element of $\II(\X)$ is also maximal in the left-hand side.
Thus $\p$ is the only maximal element of $\II(\X)$, which means that $\X$ is a prime thick subcategory with $\pp(\X) = \p$.
\end{proof}

Except trivial examples $\s(\p)$, it is difficult in general to find prime thick subcategories of $\ds(R)$.
Our next aim is to provide methods to construct new prime thick subcategories from given or trivial ones.
 
\begin{prop}\label{med}
Let $R$ be a Gorenstein local ring.
Let $\P$, $\Q$ be prime thick subcategories of $\ds(R)$.
\begin{enumerate}[\rm(1)]
\item
Assume $\P \subseteq \Q$ and $\pp(\P) = \pp(\Q)$.
Then any thick subcategory $\X$ with $\P \subseteq \X \subseteq \Q$ is a prime thick subcategory of $\ds(R)$ satisfying $\pp(\X) = \pp(\Q)$.
\item
If $\pp(\P) \subseteq \pp(\Q)$, then $\P \cap \Q$ is a prime thick subcategory of $\ds(R)$ satisfying $\pp(\P \cap \Q) = \pp(\Q)$. 
\end{enumerate}
\end{prop}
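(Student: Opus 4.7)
The plan is to reduce both statements to a tame-interior computation and then invoke the characterization of prime thick subcategories given in Proposition \ref{chprm}. First I would record the basic monotonicity fact: if $\X\subseteq\Y$ are thick subcategories of $\ds(R)$, then $\X_\tame\subseteq\Y_\tame$, since $\X_\tame$ is itself a tame thick subcategory contained in $\Y$ and hence in the largest such. This simple observation is what does most of the work.

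For part (1), set $\p:=\pp(\P)=\pp(\Q)$. By Proposition \ref{chprm} we have $\P_\tame=\Q_\tame=\s(\p)$. Monotonicity of the tame interior applied to $\P\subseteq\X\subseteq\Q$ then sandwiches $\X_\tame$ between $\s(\p)$ and $\s(\p)$, forcing $\X_\tame=\s(\p)$. Applying the converse direction of Proposition \ref{chprm} concludes that $\X$ is prime with $\pp(\X)=\p=\pp(\Q)$.

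For part (2), put $\p:=\pp(\P)$ and $\q:=\pp(\Q)$, so $\p\subseteq\q$. The strategy is again to compute $(\P\cap\Q)_\tame$ and show it equals $\s(\q)$. The upper bound $(\P\cap\Q)_\tame\subseteq\Q_\tame=\s(\q)$ is immediate from monotonicity. For the lower bound, I would use Proposition \ref{maps}(3) to get $\s(\q)\subseteq\Q$, and combine $\p\subseteq\q$ with the order-reversing property of $\s$ from Proposition \ref{maps}(1) to get $\s(\q)\subseteq\s(\p)$; then Proposition \ref{maps}(3) again gives $\s(\p)\subseteq\P$, so $\s(\q)\subseteq\P\cap\Q$. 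Since $\s(\q)$ is tame (by the description in \eqref{8}), this forces $\s(\q)\subseteq(\P\cap\Q)_\tame$. Proposition \ref{chprm} then delivers the conclusion that $\P\cap\Q$ is a prime thick subcategory with $\pp(\P\cap\Q)=\q=\pp(\Q)$.

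I do not anticipate a serious obstacle: the definitions of tame interior, the explicit formula $\P_\tame=\s(\pp(\P))$ for primes, the order-reversing property of $\s$, and the containment $\s(\pp(\P))\subseteq\P$ are all already available, and the argument is essentially a two-sided squeeze on the tame interior. The only mild subtlety is making sure that in each place where I invoke Proposition \ref{chprm} I am in its hypotheses, i.e.\ that $R$ is Gorenstein local, which is precisely the ambient assumption of the proposition.
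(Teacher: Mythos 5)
Your proposal is correct and takes essentially the same approach as the paper: both proofs hinge on the characterization from Proposition \ref{chprm} together with monotonicity of the tame interior and the containments $\s(\pp(\P))\subseteq\P$ from Proposition \ref{maps}(3). The only cosmetic difference is in part (2), where the paper first establishes $\Q_\tame\subseteq\P\cap\Q\subseteq\Q$ and then invokes part (1) applied to the primes $\Q_\tame=\s(\pp(\Q))$ and $\Q$, whereas you carry out the two-sided squeeze on $(\P\cap\Q)_\tame$ directly; both routes are equivalent.
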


\begin{proof}
(1) There are inclusions $\P_\tame \subseteq \X_\tame \subseteq \X \subseteq \Q$.
By Proposition \ref{chprm} we have $\P_\tame = \s(\pp(\P)) = \s(\pp(\Q)) = \Q_\tame$, which is the largest tame thick subcategory contained in $\Q$.
Therefore, $\X_\tame = \P_\tame = \s(\pp(\P))$.
Applying Proposition \ref{chprm} again, we see that $\X$ is prime and $\pp(\X) = \pp(\Q)$.

(2) Proposition \ref{chprm} implies $\Q_\tame = \s(\pp(\Q)) \subseteq \s(\pp(\P))=\P_\tame$.
As $\Q_\tame \subseteq \Q$ and $\Q_\tame \subseteq \P_\tame \subseteq \P$, we get $\Q_\tame \subseteq \P \cap \Q \subseteq \Q$.
It follows from (1) that $\P \cap \Q$ is prime and $\pp(\P \cap \Q) = \pp(\Q)$.
\end{proof}

We introduce the notion of covers to state our next results.

\begin{dfn}
Let $\T$ be a triangulated category.
Let $\X,\Y$ be thick subcategories of $\T$.
Then we say that $\Y$ is a {\it cover} of $\X$ if $\Y$ properly contains $\X$ and there is no thick subcategory $\cZ$ of $\T$ with $\X \subsetneq \cZ \subsetneq \Y$. 
\end{dfn}

It is unclear in general whether a cover of a given thick subcategory exists or not.
The following proposition gives us sufficient conditions for the existence of covers.

\begin{prop}\label{a}
Let $R$ be either
\begin{itemize}
\item
a complete intersection which is a quotient of a regular local ring, or
\item
a Cohen--Macaulay local ring with quasi-decomposable maximal ideal which is locally a hypersurface on the punctured spectrum.
\end{itemize}
Then the zero subcategory $\zero$ of $\ds(R)$ admits a cover.
If in addition $\sing R$ is finite, then $\s(\p)$ admits a cover as well for each $\p\in\sing R$.
\end{prop}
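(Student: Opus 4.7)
The approach in both cases is to invoke a classification theorem putting thick subcategories of $\ds(R)$ in bijection with specialization-closed subsets of a noetherian topological space, and then to exhibit explicit minimal proper extensions of the spec-closed sets corresponding to $\zero$ and $\s(\p)$. Without loss of generality assume $R$ is singular; otherwise $\ds(R)=\zero$ and the claims become vacuous.

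In case (b), the remark after Theorem \ref{sgbij} (invoking \cite[Theorem 4.5]{fiber}) supplies an inclusion-preserving bijection $\spcl(\sing R) \leftrightarrow \th\ds(R)$ via $W \mapsto \ssupp^{-1}W$. Under it, $\zero$ corresponds to $\emptyset$, and by \eqref{8}, $\s(\p)$ corresponds to $W_0:=\{\q\in\sing R\mid\q\nsubseteq\p\}$. Since $R$ is singular, $\m\in\sing R$, and every nonempty specialization-closed subset of $\sing R$ contains $\m$ (any such subset contains some $\q\in\sing R$, hence its specialization $\m$). Thus $\{\m\}$ is the minimum nonempty spec-closed subset of $\sing R$, and $\ssupp^{-1}\{\m\}$ is a cover of $\zero$. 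Assuming in addition $\sing R$ is finite, a direct check shows that $W_0\cup\{\p\}$ is specialization-closed in $\sing R$, while any specialization-closed set properly containing $W_0$ must contain $\p$ (since any $\q\in W\setminus W_0$ satisfies $\q\subseteq\p$, forcing $\p\in W$ by spec-closedness). Hence $W_0\cup\{\p\}$ is the minimum proper specialization-closed extension of $W_0$, and $\ssupp^{-1}(W_0\cup\{\p\})$ covers $\s(\p)$.

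In case (a), we invoke Stevenson's classification \cite{Ste} of thick subcategories of $\ds(R)$ for $R$ a complete intersection: it gives an inclusion-preserving bijection between $\th\ds(R)$ and the specialization-closed subsets of a noetherian topological space $Y$, built from the cohomological support varieties of $R$ and equipped with a canonical surjection $\pi:Y\to\sing R$ whose fibers are projective spaces over residue fields. Since $Y$ is nonempty and noetherian, it admits a closed point $y$, so $\{y\}$ is a minimal nonempty specialization-closed subset of $Y$, yielding a cover of $\zero$. Assuming further $\sing R$ is finite, $\s(\p)$ corresponds under the bijection to some specialization-closed $\widetilde W\subseteq Y$, and finiteness of $\sing R$ guarantees that the fiber-complement $\pi^{-1}(\p)\setminus\widetilde W$ is a nonempty noetherian subspace admitting closed points; adjoining such a closed point to $\widetilde W$ produces a minimum specialization-closed proper extension, hence a cover of $\s(\p)$.

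The main technical obstacle lies in case (a): identifying precisely which specialization-closed subset of $Y$ represents $\s(\p)$ under the bijection, and verifying that adjoining a closed point of the fiber yields a genuine specialization-closed set which is a minimum proper extension. This requires a compatibility analysis between Stevenson's cohomological support and the singular support $\ssupp$ employed in this paper, together with an understanding of how specialization in $Y$ interacts with the fibration $\pi$.
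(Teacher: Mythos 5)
Your proof takes a genuinely different route from the paper's, and while much of it is sound, the part you flag as a ``main technical obstacle'' is indeed a real gap that the paper's argument is specifically engineered to avoid.

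The paper does not try to locate the specialization-closed subset of $X$ corresponding to $\zero$ or to $\s(\p)$ and then exhibit an explicit minimal enlargement. Instead it invokes \cite[Lemma 2.9]{Mat}: under the order isomorphism $\th\ds(R)\cong\spcl(X)$, the subset $T:=\{\thick M\mid M\in\ds(R)\}$ corresponds to the \emph{closed} subsets of $X$. Since $X$ is noetherian, closed subsets satisfy the descending chain condition, so $T$ does as well; one then picks a minimal element $\thick M$ of $T\setminus\{\zero\}$ and checks it is minimal among \emph{all} nonzero thick subcategories (any nonzero $\Y$ contains some nonzero $N$, hence contains $\thick N\in T\setminus\{\zero\}$). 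The finiteness of $\sing R$ in the second assertion is there precisely so that $\s(\p)$ itself lies in $T$ (its support is a finite union of closed sets), after which the same DCC argument runs. The point of this route is that one never needs to know what $X$ looks like, what $\s(\p)$ corresponds to, or how specialization in $X$ behaves.

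By contrast, your argument works directly in $\spcl(X)$. For case (b) this is fine: $\{\m\}$ is the minimum nonempty specialization-closed subset of $\sing R$, and $W_0\cup\{\p\}$ is the minimum proper specialization-closed enlargement of $W_0$, so both covers exist (and you even observe, correctly, that the second works without the finiteness hypothesis in this case). For case (a) your treatment of the cover of $\zero$ is essentially correct---a nonempty noetherian space has a minimal nonempty closed subset, which is automatically a minimal nonempty specialization-closed subset. But for the cover of $\s(\p)$ you appeal to a fibration $\pi\colon Y\to\sing R$, to an unidentified $\widetilde W$, and to a compatibility between Stevenson's support and $\ssupp$, none of which you establish; you say so yourself. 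This is a genuine gap: without knowing that $\widetilde W\neq Y$, or which points of $Y$ lie over which primes, the ``adjoin a closed point of the fiber'' step is not justified. The gap \emph{can} be filled by a purely topological argument (the complement $Y\setminus\widetilde W$ is a nonempty noetherian subspace, take a minimal nonempty closed subset $Z'$, and $\widetilde W\cup\overline{Z'}$ is a cover in $\spcl(Y)$), which makes no reference to $\pi$ at all---but that is not what you wrote, and in any case it is a different mechanism from the paper's, which routes everything through \cite[Lemma 2.9]{Mat} and DCC on singly generated thick subcategories.
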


\begin{proof}
In either case, there is an order isomorphism between $\th(\ds(R))$ and the set of specialization-closed subsets of a noetherian topological space $X$; see \cite{fiber, Ste}.
By \cite[Lemma 2.9]{Mat}, it restricts to an order isomorphism between the set $T:=\{\thick M \mid M \in \ds(R)\}$ and the set of closed subsets of $X$.
Thus $T$ satisfies the descending chain condition (with respect to the inclusion relation), and we can take a minimal element $\X := \thick M$ of $T\setminus\{\zero\}$.
We can easily check that $\X$ is minimal among all nonzero thick subcategories, i.e., $\X$ is a cover of $\zero$.
The last assertion of the proposition is shown similarly.
\end{proof}

We obtain a sufficient condition for a given thick subcategory to be prime, using the notion of covers.

\begin{prop}\label{b}
Let $R$ be a Gorenstein local ring.
Let $\X$ be a non-tame thick subcategory of $\ds(R)$ and $\p \in \sing R$.
If $\X$ is a cover of $\s(\p)$, then $\X$ is a prime thick subcategory of $\ds(R)$ with $\pp(\X) = \p$. 
\end{prop}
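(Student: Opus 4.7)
The plan is to reduce the claim to the tame-interior characterization of prime thick subcategories given in Proposition \ref{chprm}: it suffices to verify that $\X_\tame = \s(\p)$, and then $\X$ is prime with $\pp(\X) = \p$ automatically.

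First I would sandwich $\X_\tame$ between $\s(\p)$ and $\X$. On the one hand, $\s(\p)$ is itself a tame thick subcategory by the identity \eqref{8}, i.e.\ $\s(\p) = \ssupp^{-1}\{\q \in \sing R \mid \q \nsubseteq \p\}$; together with the hypothesis $\s(\p) \subseteq \X$, the maximality in the definition of the tame interior yields $\s(\p) \subseteq \X_\tame$. On the other hand, by definition $\X_\tame \subseteq \X$. Hence
\[
\s(\p) \subseteq \X_\tame \subseteq \X.
\]

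Now I would exploit the covering hypothesis. Since $\X_\tame$ is a thick subcategory lying between $\s(\p)$ and $\X$, and $\X$ is a cover of $\s(\p)$, the only possibilities are $\X_\tame = \s(\p)$ or $\X_\tame = \X$. The assumption that $\X$ is non-tame rules out the latter equality, so $\X_\tame = \s(\p)$. Applying Proposition \ref{chprm} (available since $R$ is Gorenstein and $\p \in \sing R$) gives that $\X$ is prime with $\pp(\X) = \p$, as desired.

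There is no real obstacle here; the only subtle point is noticing that $\s(\p)$ must lie inside $\X_\tame$ rather than merely inside $\X$, which depends on $\s(\p)$ being tame. Once that is observed, the covering property of $\X$ forces the dichotomy $\X_\tame \in \{\s(\p), \X\}$, and non-tameness selects the correct alternative so that Proposition \ref{chprm} applies cleanly.
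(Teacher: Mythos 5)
Your argument is correct and follows essentially the same route as the paper: show $\s(\p) \subseteq \X_\tame \subsetneq \X$ (using tameness of $\s(\p)$ via \eqref{8} and non-tameness of $\X$), use the cover hypothesis to conclude $\X_\tame = \s(\p)$, and then invoke Proposition \ref{chprm}. The only cosmetic difference is whether one uses non-tameness before or after invoking the cover property; the logic is identical.
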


\begin{proof}
By \eqref{8}, the thick subcategory $\s(\p)$ is tame.
As $\s(\p)$ is contained in $\X$ and $\X$ is not tame, we have $\s(\p) \subseteq \X_\tame \subsetneq \X$.
The assumption that $\X$ is a cover of $\s(\p)$ implies $\s(\p) = \X_\tame$.
It follows from Proposition \ref{chprm} that $\X$ is a prime thick subcategory of $\ds(R)$ such that $\pp(\X) = \p$.
\end{proof}

As another application of covers, we get criteria for a Cohen--Macaulay local ring to be a hypersurface.

\begin{thm}\label{c}
Let $R$ be a singular Cohen--Macaulay local ring possessing a complex of finite positive complexity. 
Assume that $\zero$ admits at least one cover (e.g., $R$ is a complete intersection which is a quotient of a regular local ring).
Then the following are equivalent.\\
{\rm(1)} $R$ is a hypersurface.\qquad
{\rm(2)} Every cover of $\zero$ is tame.\qquad
{\rm(3)} There is a tame cover of $\zero$.
\end{thm}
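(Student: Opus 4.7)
The plan is as follows. Since $\zero$ admits at least one cover by hypothesis, the implication (2) $\Rightarrow$ (3) is immediate. For (1) $\Rightarrow$ (2), I take any cover $\X$ of $\zero$; by minimality $\X = \thick Y$ for every nonzero $Y \in \X$, and since $R$ is a local hypersurface, \cite[Theorem 6.8]{Tak10} gives $\thick Y = \ssupp^{-1}(\ssupp Y)$, showing that $\X$ is tame.

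The substantial direction is (3) $\Rightarrow$ (1), and my first task is to identify any tame cover concretely. Let $\X$ be a tame cover of $\zero$ and write $\X = \ssupp^{-1}(W)$ with $W$ a nonempty specialization-closed subset of $\sing R$. Pick $\p$ minimal in $W$. Then $R/\p$ is a nonzero object of $\X$, so minimality of $\X$ forces $\thick(R/\p) = \X$, and tameness then gives $\X = \ssupp^{-1}(\V(\p))$. For any $\q \in \sing R$ with $\p \subseteq \q$, the same reasoning applied to $R/\q$ forces $\thick(R/\q) = \X \ni R/\p$, which yields $\V(\p) \subseteq \V(\q)$ and hence $\q = \p$. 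Applying this with $\q = \m \in \sing R$ (possible since $R$ is singular) gives $\p = \m$, so $\X = \ssupp^{-1}(\{\m\}) = \thick k$.

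With the cover pinned to $\thick k$, the second stage is a Koszul-complexity descent. Let $N \in \ds(R)$ satisfy $0 < \cx_R N < \infty$, let $\xx = x_1, \ldots, x_d$ be a system of parameters of the Cohen--Macaulay local ring $R$, and set $I = (\xx)$. The Koszul triangles $N \xrightarrow{x_i} N \to N \otimes^L \K((x_i)) \rightsquigarrow$ in $\db(R)$ place $N \otimes^L \K(I) \in \thick N$ in $\db(R)$, hence in $\ds(R)$. The homological support of $N \otimes^L \K(I)$ lies in $\V(I) = \{\m\}$, and applying $\Ext_R^*(-, k)$ to each Koszul triangle (using that $x_i \in \m$ kills the induced map on $\Ext$) inductively gives $\pd_R(N \otimes^L \K(I)) = \pd_R N + d = \infty$, since $\cx_R N > 0$ forces $\pd_R N = \infty$. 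Hence $N \otimes^L \K(I)$ is a nonzero element of $\thick k$; minimality then gives $\thick(N \otimes^L \K(I)) = \thick k$, so $k \in \thick N$, and therefore $\cx_R k \le \cx_R N < \infty$. By \cite[Theorem 2.3]{Gul}, $R$ is a complete intersection, and \cite[Proposition 2.2]{Ber} then supplies a module $M$ with $\cx_R M = 1$; repeating the Koszul argument with $M$ yields $k \in \thick M$, hence $\cx_R k \le 1$, so $R$ is a hypersurface. The hard part will be Stage I: pinning $\X$ to $\thick k$ through the rather subtle use of minimality of $\X$ applied to $\thick(R/\q)$ for every $\q \supseteq \p$ in $\sing R$, which forces $\p$ all the way up to $\m$. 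Once this identification is in hand, the Stage II Koszul descent is routine.
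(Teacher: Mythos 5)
Your proof is correct and follows essentially the same strategy as the paper's: pin any tame cover of $\zero$ down to $\ssupp^{-1}\{\m\}=\thick k$, use a Koszul descent to put a finite-positive-complexity object into that subcategory, invoke Gulliksen and Bergh, then rerun the descent with a complexity-one module. The differences are in execution. For Stage I, the paper reaches $\X=\ssupp^{-1}\{\m\}$ in one line by observing that $\{\m\}$ is the unique minimal nonempty specialization-closed subset of $\sing R$ (the maximal ideal being the unique closed point), so $\ssupp^{-1}\{\m\}$ is the smallest nonzero tame thick subcategory and is therefore contained in, hence equal to, any tame cover of $\zero$; your escalation through primes $\q\supseteq\p$ is sound but longer than necessary. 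For Stage II, the paper first replaces the given complex by a maximal Cohen--Macaulay module $M$ and passes to $N=M/\xx M$; you instead tensor the complex directly with $\K(I)$ inside $\db(R)$ and track support and complexity, which sidesteps the implicit passage from a bounded complex to an MCM module of the same complexity. Your closing move, rerunning the Koszul descent with a complexity-one module to land $k\in\thick M$ and conclude $\cx_R k\le1$, is in fact cleaner than the paper's compressed statement that $\cx_R C=\cx_R k=\codim R$ for any nonzero $C$ (which is literally true only for $C$ in the cover). One small imprecision: writing $\pd_R(N\otimes\K(I))=\pd_R N+d=\infty$ is not literal arithmetic when $\pd_R N=\infty$; it is cleaner to observe that since each $x_i\in\m$ acts by zero on $\Ext_R^\ast(-,k)$, the Koszul triangles give $\cx_R(N\otimes\K(I))=\cx_R N>0$, which already forces infinite projective dimension and hence nonvanishing in $\ds(R)$.
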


\begin{proof}
The implication $(1) \Rightarrow (2)$ follows from \cite[Theorem 6.8]{Tak10}, while the implication $(2) \Rightarrow (3)$ is trivial.
Let us show the implication $(3)\Rightarrow(1)$.
We denote by $\m$ the maximal ideal of $R$, by $k=R/\m$ the residue field of $R$, and put $d=\dim R$.
Let $\X$ be a tame  cover of $\zero$.
Then $\X = \ssupp^{-1}\{\m\}$ since it is minimal among the nonzero tame thick subcategories of $\ds(R)$.
Again the minimality of $\X$ shows that for each nonzero object $X\in\X$ one has $\thick X=\X=\ssupp^{-1}\{\m\}$, which contains $k$.

By assumption, there exists a maximal Cohen--Macaulay $R$-module $M$ with finite positive complexity.
Take a maximal regular sequence $\xx=x_1,\dots,x_d$ on $R$, and set $N=M/\xx M$.
Then $N$ is an $R$-module of finite length and with finite positive complexity.
As $N$ belongs to $\ssupp^{-1}(\m)$, the above argument says that $\thick N$ contains $k$.
Since the full subcategory of objects of $\ds(R)$ with finite complexity is thick, $k$ has finite complexity.
Therefore, $R$ is complete intersection by \cite[Theorem 2.3]{Gul} and $\cx_RC= \cx_R k = \codim R$ for any $0\neq C \in \ds(R)$.
It follows from \cite[Proposition 2.2]{Ber} that there is a finitely generated $R$-module $L$ with complexity $1$.
Then $\codim R = \cx_R L = 1$, which means that $R$ is a hypersurface.
\end{proof}

\begin{rem}
Let $(R,\m)$ be a Gorenstein singular local ring.
If $\pp^{-1}(\m)$ consists only of $\s(\m)=\zero$, then $\ssupp^{-1}\{\m\}$ is a unique cover of $\zero$.
Indeed, let $\X\ne\zero$ be a thick subcategory of $\ds(R)$.
If $\X_\tame = \zero$, then $\X \in \pp^{-1}(\m)$ by Proposition \ref{chprm}, and $\X= \zero$.
This contradiction shows $\X_\tame \neq \zero$.
As $\m\in\ssupp(\X_\tame)$,
$$
\zero\subsetneq\ssupp^{-1}\{\m\}\subseteq\ssupp^{-1}\ssupp(\X_\tame)=\X_\tame\subseteq\X.
$$
This proves that $\ssupp^{-1}\{\m\}$ is a unique cover of $\zero$.
In particular, every cover of $\zero$ is tame.
By Theorem \ref{c}, we obtain another proof of Theorem \ref{sgbij} in the Gorenstein case.
\end{rem}

Finally, we prove a result corresponding to Theorem \ref{dim}, which is an application of Theorem \ref{c}.

\begin{thm}\label{dimsg}
\begin{enumerate}[\rm(1)]
\item
One has the inequality $\dim \ds(R) \ge \dim \sing R$.
\item
Consider the following four conditions.\\
\begin{tabular}{ll}
\qquad
{\rm(a)} $\dim \ds(R) = \dim \sing R$.
&{\rm(b)} $\dim \spc \ds(R) = \dim \sing R$.\\
\qquad
{\rm(c)} $\spc \ds(R)\cong\sing R$.
&{\rm(d)} $R$ is a hypersurface.
\end{tabular}
\\
Then the implications {\rm(c)} $\Rightarrow$ {\rm(b)} $\Rightarrow$ {\rm(a)} holds.
The implication {\rm(d)} $\Rightarrow$ {\rm(c)} holds if $R$ is a singular local ring with closed singular locus.
The implication {\rm(a)} $\Rightarrow$ {\rm(d)} holds if $R$ is a Gorenstein local ring with closed singular locus possessing a complex of finite positive complexity.
\end{enumerate}
\end{thm}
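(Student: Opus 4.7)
\textbf{Plan for Theorem \ref{dimsg}.} I would first dispose of Part (1) and the easy implications of Part (2). Part (1) is a direct consequence of Proposition \ref{maps}(1): since $\s$ is order-reversing and admits $\pp$ as a left inverse, it is strictly order-reversing, so any chain $\p_0\subsetneq\cdots\subsetneq\p_n$ in $\sing R$ produces a chain $\s(\p_n)\subsetneq\cdots\subsetneq\s(\p_0)$ of the same length in $\spc\ds(R)$. Within Part (2), (c) $\Rightarrow$ (b) is immediate because homeomorphic spaces share their Krull dimension, and (b) $\Rightarrow$ (a) will follow by squeezing $\dim\sing R\le\dim\ds(R)\le\dim\spc\ds(R)=\dim\sing R$ using Part (1) and Remark \ref{r}. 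For (d) $\Rightarrow$ (c) I will invoke Theorem \ref{sgbij}; its hypothesis is met since a singular hypersurface local ring has residue field of complexity $1$, giving the required complex of finite positive complexity.

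The substantive implication is (a) $\Rightarrow$ (d). My plan is to furnish a tame cover of the zero subcategory $\zero\subseteq\ds(R)$ and then invoke Theorem \ref{c}. Put $d:=\dim\sing R$ and fix a maximal chain $\p_0\subsetneq\cdots\subsetneq\p_d=\m$ in $\sing R$ (which terminates at $\m$ because $\m$ is the unique maximum of $\sing R$). The candidate tame cover is $\X:=\ssupp^{-1}\{\m\}$, which contains the residue field (so is nonzero as $R$ is singular) and, by \eqref{8}, is contained in $\s(\p_{d-1})$ whenever $d\ge1$.

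The heart of the argument is to show $\X$ is a cover of $\zero$. Suppose for contradiction that a thick subcategory $\Y$ satisfies $\zero\subsetneq\Y\subsetneq\X$. The crucial observation is that every \emph{nonzero} tame thick subcategory of $\ds(R)$ has the form $\ssupp^{-1}W$ for a specialization-closed $W\subseteq\sing R$ containing $\m$ (because $\ssupp M$ is Zariski-closed and $\m$ is the largest element of $\sing R$, so every nonempty $\ssupp M$ hits $\m$), and hence contains $\X$; since $\Y\subsetneq\X$, this forces $\Y_\tame=\zero=\s(\m)$. Proposition \ref{chprm} then declares $\Y$ to be a prime thick subcategory with $\pp(\Y)=\m$. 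Combined with $\Y\subsetneq\X\subseteq\s(\p_{d-1})$, this produces a strict chain of prime thick subcategories
\[
\zero\subsetneq\Y\subsetneq\s(\p_{d-1})\subsetneq\cdots\subsetneq\s(\p_0)
\]
(collapsing to $\zero\subsetneq\Y$ when $d=0$), of length $d+1$, contradicting $\dim\ds(R)=d$. Thus $\X$ is a tame cover of $\zero$; since $R$ is Gorenstein (hence Cohen--Macaulay) and singular with a complex of finite positive complexity, Theorem \ref{c} applies and its implication (3) $\Rightarrow$ (1) gives that $R$ is a hypersurface.

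The main obstacle will be establishing $\Y_\tame=\zero$ for an arbitrary intermediate $\Y$, which is where the locality of $R$ is essentially used: the uniqueness of $\m$ as the generic specialization inside $\sing R$ rules out any nonzero tame subcategory strictly below $\X$, and this is precisely what enables the upgrade of $\Y$ to a prime thick subcategory via Proposition \ref{chprm} so that the dimension count can be applied.
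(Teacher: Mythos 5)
Your Part (1) and the easy implications in Part (2) follow the paper exactly. For the main implication (a) $\Rightarrow$ (d), however, you take a genuinely different — and, I think, cleaner — route than the paper. The paper fixes an arbitrary prime thick subcategory $\P$ of height $1$, argues that if $\P$ is non-tame then $\pp(\P)=\m$ via Proposition~\ref{b}, forms the intersection $\s(\p_{n-1})\cap\P$, uses Propositions~\ref{med}(2) and~\ref{maps}(1) to see this intersection is a prime contained in two distinct height-$1$ primes and hence equals $\zero$, and finally deduces $\P=\zero$ by passing to a maximal Cohen--Macaulay representative $M\in\P$ and killing a maximal regular sequence, exactly as in the last paragraph of Theorem~\ref{bij}(3). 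That shows every cover of $\zero$ is tame, i.e.\ verifies condition (2) of Theorem~\ref{c}. You instead exhibit a \emph{single} tame cover, namely $\ssupp^{-1}\{\m\}$, and verify condition (3) of Theorem~\ref{c}: you observe that, since $R$ is local, every nonzero tame thick subcategory $\ssupp^{-1}W$ must have $\m\in W$ and hence contain $\ssupp^{-1}\{\m\}$, so any intermediate $\Y$ has $\Y_\tame=\zero=\s(\m)$ and is therefore prime with $\pp(\Y)=\m$ by Proposition~\ref{chprm}; appending $\Y$ below the chain $\s(\p_{d-1})\subsetneq\cdots\subsetneq\s(\p_0)$ then exceeds $\dim\ds(R)=d$. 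This bypasses Propositions~\ref{med} and~\ref{b} and the maximal Cohen--Macaulay truncation argument entirely, replacing them with a direct dimension count. Both arguments are correct; yours is shorter and also has the side benefit of explicitly producing a cover of $\zero$ (a point the paper handles somewhat implicitly when invoking Theorem~\ref{c}). One small caveat worth spelling out in a final write-up: the hypothesis ``complex of finite positive complexity'' guarantees $R$ is singular, so that $\m\in\sing R$, $\s(\m)=\zero$, and $\ssupp^{-1}\{\m\}$ is nonzero; you note this, and it is essential for the whole construction.
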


\begin{proof}
(1) Proposition \ref{maps}(1) shows the assertion.

(2) It is clear that (c) implies (b), while it follows from (1) and Remark \ref{r} that (b) implies (a).
If $R$ is local and $\sing R\ne\emptyset$ is closed, then Theorem \ref{sgbij} shows that (d) implies (c).

Let us show that (a) implies (d) under the assumption that $(R,\m)$ is a Gorenstein local ring with closed singular locus possessing a complex of finite positive complexity.
Set $n:=\dim \ds(R) = \dim \sing R$.
Take a chain $\p_0 \subsetneq \cdots \subsetneq \p_n = \m$ in $\sing R$, and apply $\s$.
We get a chain
$$
\zero=\s(\m)=\s(\p_n)\subsetneq \s(\p_{n-1}) \subsetneq \cdots \subsetneq \s(\p_0)
$$
in $\spc\ds(R)$.
As $\dim\ds(R)=n$, we have $\height\s(\p_{n-1})=1$.
Fix a prime thick subcategory $\P$ of $\ds(R)$ with $\height \P = 1$.
Then $\P$ is a cover of $\s(\m)=\zero$.
If $\P$ is not tame, then $\P \in \pp^{-1}(\m)$ by Proposition \ref{b}.
Hence $\s(\p_{n-1}) \cap \P \in \pp^{-1}(\m)$ by Propositions \ref{med}(2) and \ref{maps}(1).
As $\s(\p_{n-1})$ is tame by \eqref{8}, it is not equal to $\P$.
Thus $\s(\p_{n-1})$ and $\P$ are distinct prime thick subcategories of height $1$, which forces us to have $\s(\p_{n-1}) \cap \P = \zero$.
Similarly to the last paragraph of the proof of Theorem \ref{bij}, for a maximal Cohen--Macaulay $R$-module $M$ in $\P$ and a maximal regular sequence $\xx$ on $R$, we have $M/\xx M \in \s(\p_{n-1}) \cap \P=\zero$, and $M \cong 0$ in $\ds(R)$, whence $\P = \zero$, a contradiction.  
Thus any height $1$ prime thick subcategory of $\ds(R)$ is tame, and so is every cover of $\zero$.
It follows from Theorem \ref{c} that $R$ is a hypersurface.
\end{proof}

%%%%%%%%%%%%%%%%%%%%%%%%%%%%%%%%%%%%%%%%%%%%%%%%%%%%%%%

%%%%%%%%%%%%%%%%%%%%%%%%%%%%%%%%%%%%%%%%%%%%%%%
\end{document}